\newtheorem{lemma1}{}[section]
\newenvironment{lemma}{\begin{lemma1}{\bf Lemma.}}{\end{lemma1}}
\newenvironment{example}{\begin{lemma1}{\bf Example.}\rm}{\end{lemma1}}
\newenvironment{theorem}{\begin{lemma1}{\bf Theorem.}}{\end{lemma1}}
\newenvironment{proposition}{\begin{lemma1}{\bf Proposition.}}{\end{lemma1}}
\newenvironment{corollary}{\begin{lemma1}{\bf Corollary.}}{\end{lemma1}}
\newenvironment{remark}{\begin{lemma1}{\bf Remark.}\rm}{\end{lemma1}}
\newenvironment{definition}{\begin{lemma1}{\bf Definition.}}{\end{lemma1}}
\newenvironment{notation}{\begin{lemma1}{\bf Notation.}}{\end{lemma1}}
\newenvironment{setup}{\begin{lemma1}{\bf Setup.}}{\end{lemma1}}
\newenvironment{construction}{\begin{lemma1}{\bf Construction.}\rm}{\end{lemma1}}
\newenvironment{conjecture}{\begin {lemma1}{\bf Conjecture.}}{\end{lemma1}}
\newenvironment{assumption}{\begin{lemma1}{\bf Assumption.}}{\end{lemma1}}
\newenvironment{the local obstruction - setup}{\begin{lemma1}{\bf The local obstruction - setup.}}{\end{lemma1}}
\newenvironment{remark*}{{\bf Remark.}}{}
\newenvironment{remarks*}{{\bf Remarks.}}{}
\newenvironment{example*}{{\bf Example.}}{}
\newenvironment{assumption*}{{\bf Assumption.}}{}
\newcommand{\Q}{\ensuremath{\mathbb{Q}}}
\newcommand{\Z}{\ensuremath{\mathbb{Z}}}
\newcommand{\C}{\ensuremath{\mathbb{C}}}
\newcommand{\N}{\ensuremath{\mathbb{N}}}
\newcommand{\PP}{\ensuremath{\mathbb{P}}}
\newcommand{\holom}[3]{\ensuremath{#1:#2  \rightarrow #3}}
\newcommand{\fibre}[2]{\ensuremath{#1^{-1} (#2)}}
\newcommand\sE{{\mathcal E}}
\newcommand\sF{{\mathcal F}}
\newcommand\sG{{\mathcal G}}
\newcommand\sH{{\mathcal H}}
\newcommand\sI{{\mathcal I}}
\newcommand\sT{{\mathcal T}}
\newcommand\sO{{\mathcal O}}
\DeclareMathOperator*{\sing}{sing}
\DeclareMathOperator*{\nons}{nons}
\title{A Nonvanishing Conjecture for Cotangent Bundles} 
\date{\today}
\author{Andreas H\"oring}
\author{Thomas Peternell}
\address{Andreas H\"oring, Universit\'e C\^ote d'Azur, CNRS, LJAD, France, Institut universitaire de France}
\email{Andreas.Hoering@univ-cotedazur.fr}
\address{Thomas Peternell, Mathematisches Institut, Universit\"at Bayreuth, 95440 Bayreuth, 
Germany}
\email{thomas.peternell@uni-bayreuth.de}
\subjclass[2010]{14J32, 37F75, 14E30}
\keywords{MMP, minimal models, positivity of vector bundles, nonvanishing conjecture, symmetric differentials}
\begin{document}

\begin{abstract}  
In this paper we study the positivity of the cotangent bundle of projective manifolds.
We conjecture that the cotangent bundle is pseudoeffective if and only the manifold
has non-zero symmetric differentials. We confirm this conjecture for most projective surfaces that are not of general type.
\end{abstract}

\newpage

\maketitle

\section{Introduction} 

\subsection{Main result}
A central part in the minimal model program in algebraic geometry is the so-called nonvanishing conjecture:  given a projective manifold or,
more generally, a variety with klt singularities, $X$, whose canonical class $K_X$ is pseudoeffective, one has 
$$
H^0(X,\sO_X(mK_X)) \ne 0
$$
for some positive integer $m$. This conjecture has been proven some time ago in dimension at most three, but is wide open in higher dimensions. 

In analogy to the nonvanishing conjecture, one might ask for 

\begin{conjecture} \label{conj:nv} 
Let $X$ be a normal projective variety with klt singularities. 
Let $1 \leq q \leq \dim X$. Then $\Omega^{[q]}_X$, the sheaf of reflexive holomorphic differentials in degree $q$, is pseudoeffective,
(see Definition \ref{def:reflexive}),  if and only 
for some positive integer $m$ one has  
$$
H^0(X,S^{[m]}\Omega^{[q]}_{X}) \ne 0.
$$ 
\end{conjecture}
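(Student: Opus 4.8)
The equivalence is a conjecture, so what follows is a plan to establish it for projective surfaces not of general type, as announced in the abstract. One implication is formal: a nonzero section of $S^{[m]}\Omega^{[q]}_X$ makes the tautological line bundle $\sO_{\PP(\Omega^{[q]}_X)}(m)$ -- hence $\sO_{\PP(\Omega^{[q]}_X)}(1)$, that is $\Omega^{[q]}_X$ -- effective, a fortiori pseudoeffective. By this implication, for each fixed $X$ it suffices to do \emph{one} of two things: exhibit a nonzero element of $H^0(X,S^{[m]}\Omega^{[q]}_X)$ for some $m$ (which forces $\Omega^{[q]}_X$ pseudoeffective), or prove that $\Omega^{[q]}_X$ is \emph{not} pseudoeffective (which forces $H^0(X,S^{[m]}\Omega^{[q]}_X)=0$ by the same implication). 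On a surface the case $q=2$ is the nonvanishing theorem for $K_X=\Omega^{[2]}_X$ in dimension two, so only $q=1$ remains; and since $H^0(S^{[m]}\Omega^1)$ is a birational invariant, while pseudoeffectivity of $\Omega^1$ behaves well under the relevant birational modifications, I would reduce to $X$ smooth and minimal with $\kappa(X)\le 1$. Write $\xi=\sO_{\PP(\Omega^1_X)}(1)$ and $\pi\colon\PP(\Omega^1_X)\to X$.

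The plan is to realise these two alternatives through two mechanisms. \emph{To produce a symmetric differential}, it suffices to find a saturated sub-line-bundle $L\hookrightarrow\Omega^1_X$ with $\kappa(L)\ge 0$: then $L^{\otimes m}\hookrightarrow S^m\Omega^1_X$ gives $0\ne H^0(L^{\otimes m})\subseteq H^0(S^m\Omega^1_X)$ for suitable $m$; if $q(X)\ge 1$, the saturation of a nonzero global $1$-form is such an $L$. \emph{To rule out pseudoeffectivity}, it suffices to exhibit a covering family $\{C_t\}$ of smooth curves on $X$ such that, for general $t$, the larger slope in the splitting type of $\Omega^1_X|_{C_t}$ is negative -- equivalently, $\sO_{\PP(\Omega^1_X|_{C_t})}(1)$ is not pseudoeffective on the ruled surface $\PP(\Omega^1_X|_{C_t})$: then $\xi$ cannot be pseudoeffective, since a pseudoeffective class restricts to a pseudoeffective class on the general member of a covering family and the $\PP(\Omega^1_X|_{C_t})$ sweep out $\PP(\Omega^1_X)$.

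I would then go through the Enriques--Kodaira classification of $X$. If $q(X)\ge 1$ -- surfaces ruled over a curve of genus $\ge 1$, abelian and bielliptic surfaces, properly elliptic surfaces over a base of positive genus -- the first mechanism applies. If $q(X)=0$ and $\kappa(X)=-\infty$, then $X$ is rational, and I would produce a negative covering family of rational curves (lines on $\PP^2$; a base-point-free pencil of sufficiently positive smooth rational curves on a Hirzebruch surface; the strict transform of a general line after a blow-up), so that $\Omega^1_X$ is not pseudoeffective. If $q(X)=0$ and $\kappa(X)=0$, then $X$ is a K3 or an Enriques surface; here $c_1(\Omega^1_X)=K_X$ is numerically trivial and $\Omega^1_X$ is slope-stable for every polarisation (via a Ricci-flat Kähler metric), so a pseudoeffective $\Omega^1_X$ would be numerically flat, forcing $c_2(X)=c_2(\Omega^1_X)=0$ -- impossible, since $c_2(X)=24$, resp.\ $12$; hence $\Omega^1_X$ is not pseudoeffective.

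This leaves $q(X)=0$, $\kappa(X)=1$, i.e.\ $X$ properly elliptic over $\PP^1$, which I expect to be the main obstacle. If the fibration $f\colon X\to\PP^1$ is isotrivial, then $X$ is birational to a quotient $(E\times B)/G$ with $G\to\mathrm{Aut}(E)$ having cyclic rotation part of order $\le 6$, and the $G$-invariant form $(dz)^{\otimes 12}$ on $E\times B$ descends and extends to a nonzero section of $S^{12}\Omega^1_X$, so the first mechanism applies. The hard case is the \emph{non-isotrivial} one: here $f_*\omega_{X/\PP^1}$ is an ample line bundle on $\PP^1$ by Arakelov's inequality, but this positivity sits in a \emph{quotient} $\omega_{X/\PP^1}$ of $\Omega^1_X$, not in a sub-sheaf, while the two-step filtration of $S^m\Omega^1_X$ carries the strongly negative term $f^*\omega_{\PP^1}^{\otimes m}$ at the bottom. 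The crux, I expect, is to convert the Hodge-theoretic positivity of the variation of Hodge structure of $f$ -- through the Kodaira--Spencer class, or a Viehweg--Zuo-type subsheaf of some $S^m\Omega^1_X$ -- either into a genuine symmetric differential on $X$ or, for those $X$ carrying none, into a proof that $\Omega^1_X$ is not pseudoeffective; settling this dichotomy uniformly is presumably what confines the theorem to \emph{most}, rather than all, surfaces not of general type.
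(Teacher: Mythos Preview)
Your reduction to smooth minimal surfaces and the treatment of $\kappa(X)\le 0$ are fine and match the paper (Corollary~4.10; the K3/Enriques step is nontrivial but is precisely the input from \cite{HP19} that the paper invokes in Proposition~4.6).

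The genuine gap is in the isotrivial $\kappa=1$ case, and it inverts the true difficulty. Your claim that the $G$-invariant form $(dz)^{\otimes 12}$ on $C\times E$ ``descends and extends'' to a nonzero section of $S^{12}\Omega^1_X$ is false whenever $G$ acts with fixed points, i.e.\ whenever $f$ is not almost smooth. In local resolution coordinates $(u,v)$ on $X$ with $z_1=\sqrt{u}$, $z_2=v\sqrt{u}$ over an $A_1$-point, one has $dz_2=(v\,du+2u\,dv)/(2\sqrt{u})$, and the $du^i$-term of $(dz_2)^i$ carries a pole of order $i/2$ along the exceptional curve $\{u=0\}$. The paper reproves Sakai's example (\S 6.3): a standard isotrivial elliptic surface over $\PP^1$ with $H^0(X,S^m\Omega_X)=0$ for every $m$. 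So in the isotrivial case one cannot hope to \emph{produce} a symmetric differential; one must instead prove that $\Omega_X$ is \emph{not} pseudoeffective whenever the saturated sub-line-bundle $f^*\Omega_B(D)\subset\Omega_X$ fails to be. This is the technical heart of the paper (Theorem~6.4), and it requires a quantitative local obstruction: a twisted symmetric differential on $C\times E$ that induces a holomorphic one on $X$ must vanish to order at least $i-2j$ at each fixed point (Corollary~6.9), and a count against $\deg\omega_C$ using Lemma~6.3 then kills all sections for $i\gg j$. The residual open case (``most'' rather than all surfaces) comes from here, not from the non-isotrivial case: when $E$ has complex multiplication the fibration need not be standard and the local obstruction at the higher cyclic quotient singularities is not worked out.

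Conversely, the non-isotrivial case---which you expect to be the crux---is in fact the clean one, and no Viehweg--Zuo machinery is needed. The nonvanishing Kodaira--Spencer class forces, by \cite[Ex.~1.7]{DPS94}, every positive current in the class $\zeta$ to put mass exactly $1$ on the section $Y=\PP(\sI_Z\otimes\omega_{X/B}(-D))$ over a very general fibre; Siu decomposition then gives $\zeta-[Y]\ge 0$, i.e.\ $f^*\Omega_B(D)$ is pseudoeffective (Proposition~5.2), and an orbifold Riemann--Hurwitz argument (Proposition~5.1) yields $\widetilde q(X)>0$.
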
 

In the case $q = \dim X$, this is of course the nonvanishing conjecture stated above.
The only general result confirming Conjecture \ref{conj:nv} is given in \cite[Thm.1.6]{HP19}: Suppose $X$ is klt and smooth in codimension two with $K_X \equiv 0$.
If $\Omega^{[1|}_X $ is pseudoeffective, there is a quasi-\'etale cover $\widetilde  X \to X$ such that 
$q(\widetilde X) > 0$. In particular one has $ H^0(X,S^{[m]}\Omega^{[1]}_{X}) \ne 0$ for some positive integer $m$ (\cite[Prop.2.2]{Ane18}, see also Lemma \ref{propcover2}). 
While the pseudoeffectivity of $K_X$ is equivalent to the non-uniruledness of the manifold, 
we do not know many examples where $\Omega_X^q$ is pseudoeffective, but not big. 
We expect that this property is actually quite restrictive, our Theorem \ref{theoremmain} confirms this intuition in the first non-trivial case.

In this paper we are mainly interested in the case $q = 1$. Already for smooth surfaces, Conjecture \ref{conj:nv} is delicate. In this case we can assume without loss of generality
that $X$ is minimal, see Proposition \ref{prop:bir}.
By surface classification, see Corollary \ref{corollarysurfaces},
the problem starts with $\kappa (X) = 1$. We basically settle this case:

\begin{theorem} \label{theoremmain} 
Let $f: X \to B$ be a (minimal) smooth elliptic surface with $\kappa (X) = 1$ such that $\Omega^1_X$ is pseudoeffective. Suppose 
one of the following.
\begin{enumerate}
\item $f$ is not isotrivial
\item $f$ is isotrivial and the general fiber does not have complex multiplication
\item The tautological class on $\mathbb P(\Omega_X^1) $ is nef in codimension one.
\end{enumerate}
Then $\widetilde q(X) > 0$ (see Definition \ref{definitiontildeq}), 
so there is a positive integer $m$ such that 
$$
H^0(X,S^m\Omega_X^1)) \ne 0.
$$ 
\end{theorem}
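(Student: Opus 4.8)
The plan is to treat the elliptic fibration $f\colon X\to B$ structurally, using the relative cotangent sequence $0\to f^*\Omega^1_B\to\Omega^1_X\to\Omega^1_{X/B}\to 0$ and the fact that for a minimal elliptic surface with $\kappa(X)=1$ the relative dualizing sheaf $\omega_{X/B}$ is semiample of positive Kodaira dimension, while $\Omega^1_{X/B}$ is a torsion-twist of it concentrated over the singular fibers. The key dichotomy is whether the base $B$ has genus $g(B)\ge 1$ or $g(B)=0$. If $g(B)\ge 1$, then $f^*\Omega^1_B$ already injects a nonzero pullback of a (pseudoeffective, indeed semiample) line bundle into $\Omega^1_X$, and pulling back sections of $S^m\Omega^1_B$ immediately gives $H^0(X,S^m\Omega^1_X)\ne 0$; moreover $q(X)\ge g(B)>0$, hence $\widetilde q(X)>0$. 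So the real content is the case $g(B)=0$, i.e. $B=\PN^1$, which is where the hypotheses (1)–(3) must be used.

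\textbf{The case $B=\PN^1$.} Here I would exploit pseudoeffectivity of $\Omega^1_X$ together with the projectivization $\pi\colon\PN(\Omega^1_X)\to X$ and its tautological class $\xi=\sO_{\PN(\Omega^1_X)}(1)$. Pseudoeffectivity of $\Omega^1_X$ means $\xi$ is pseudoeffective; nonvanishing $H^0(X,S^m\Omega^1_X)\ne 0$ is equivalent to $\xi$ being effective. In case (3) the extra hypothesis is precisely that $\xi$ is nef in codimension one, which by the theory of the restricted base locus / the surface analogue of the rational Zariski decomposition should force the numerical class of $\xi$ to be effective: a pseudoeffective class on a projective variety that is nef in codimension one is movable, and on $\PN(\Omega^1_X)$ the movable cone can be analyzed via the fibration structure, giving effectivity up to the fiber direction and hence a nonzero symmetric differential. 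In cases (1) and (2) one instead studies the variation of Hodge structure / the period map of $f$. For a non-isotrivial fibration over $\PN^1$, there is a nontrivial map from $\PN^1$ to a modular curve, and the local system $R^1f_*\bZ$ (or rather its Higgs bundle) produces a sub-line-bundle of $f_*\omega_{X/B}$ of positive degree; combined with the singular-fiber contributions this yields a positive quotient or sub of $\Omega^1_X$ along fibers, which one bootstraps to a global symmetric differential. For isotrivial $f$ without complex multiplication (case (2)), after a base change the surface becomes birational to a product $E\times C$ with a finite group action, and the absence of CM means the only deck transformations act on $E$ by $\pm 1$; one then produces symmetric differentials by descent from $E\times C$, and $\widetilde q(X)>0$ comes from the $\widetilde q$-invariant being insensitive to quasi-étale covers (as in Lemma \ref{propcover2}).

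The mechanism to pass from ``numerically effective tautological class'' or ``positive Higgs sub-bundle'' to the actual cohomological nonvanishing $H^0(X,S^m\Omega^1_X)\ne 0$, and then to $\widetilde q(X)>0$, should go through the structure results quoted in the introduction: once we know $\Omega^1_X$ has a pseudoeffective determinant-type positivity concentrated in a sub-sheaf, a cover $\widetilde X\to X$ trivializing the monodromy has $q(\widetilde X)>0$, and $\widetilde q$ is defined exactly to capture this. So the logical skeleton is: (i) reduce to $B=\PN^1$; (ii) in each of the three cases produce a nonzero section of some $S^m\Omega^1_X$; (iii) deduce $\widetilde q(X)>0$ from the interpretation of $\widetilde q$ via quasi-étale covers.

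\textbf{Main obstacle.} The hard part will be case (1), the non-isotrivial fibration over $\PN^1$: here pseudoeffectivity of $\Omega^1_X$ is a genuine constraint rather than an automatic consequence of the base, and one must show it is incompatible with the ``generic'' behavior of the tautological class unless a positive sub-line-bundle of $f_*\omega_{X/B}$ exists forcing $\widetilde q(X)>0$. Concretely, the obstacle is controlling the Zariski decomposition of $\xi$ on the threefold $\PN(\Omega^1_X)$ and relating its positive part to the Fujita/Kawamata semipositivity of $f_*\omega_{X/B}$ and to the local exponents of the singular fibers — this is precisely where hypothesis (3) is inserted as a hypothesis in the other cases, so removing it in case (1) requires the honest VHS input. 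I would expect the bulk of the paper's work, and the delicate case analysis on singular fiber types (Kodaira's list), to live here.
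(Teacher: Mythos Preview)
Your high-level reduction to $B=\PP^1$ is right, but the proposal misses the paper's unifying mechanism and gets the difficulty assessment inverted. In all three cases the paper proves the same intermediate statement: the saturated sub-line-bundle $f^*\Omega_B(D)\subset\Omega^1_X$ (where $D$ records the non-reduced parts of fibres) is pseudoeffective. Once this holds, Proposition~\ref{propA} gives $\widetilde q(X)>0$ by an orbifold argument: pseudoeffectivity of $f^*\Omega_{\PP^1}(D)$ forces at least three multiple fibres, and a classical base change removes them while raising the genus of the base. Your proposal never isolates this sub-line-bundle; you instead try to go directly from positivity of $\xi$ to effectivity or to $\widetilde q>0$, which is where the gaps appear.

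Case (a) is not the hard case. Non-isotriviality means the Kodaira--Spencer class is non-zero, so over a general fibre $F$ the extension $0\to\sO_F\to\Omega_X|_F\to\sO_F\to 0$ is non-split. The paper then invokes a rigidity result of Demailly--Peternell--Schneider (Lemma~\ref{lemmaDPS}): any positive current in the class $\zeta|_F$ is forced to be the current of integration along the section $C\subset\PP(\Omega_X|_F)$. A Siu decomposition of a global positive current in $[\zeta]$ then shows the divisor $Y=\PP(\sI_Z\otimes\omega_{X/B}(-D))$ appears with multiplicity one in the negative part, so $\zeta-[Y]=\pi^*c_1(f^*\Omega_B(D))$ is pseudoeffective. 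This is a short current-theoretic argument, not a VHS/Higgs analysis, and it does not require any case-by-case study of singular fibre types.

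The genuinely hard case is (b), and here your ``descent from $C\times E$'' idea has a real gap. The map $C\times E\to X$ is \emph{not} quasi-\'etale: it factors through the quasi-\'etale quotient $C\times E\to(C\times E)/G$ followed by the inverse of the resolution $\lambda\colon X\to(C\times E)/G$. As Example~\ref{exampleproducttype} shows, $\widetilde q$ is not a birational invariant --- a Kummer K3 surface has $\widetilde q(X)=0$ even though the singular model has $\widetilde q>0$. So you cannot read off $\widetilde q(X)>0$ from the product cover. The paper instead proves the contrapositive: if $f^*\Omega_B(D)$ is \emph{not} pseudoeffective, then a Hurwitz-type count (Lemma~\ref{lemmanumberZ}) shows there are at least $2g(C)-1$ fixed points with $A_1$-type action, and an explicit local obstruction at these points (Corollary~\ref{corollarylocalsymmetric}, following \cite{BTV19}) forces every twisted symmetric differential to vanish to high order there --- too high for the degree of $\omega_C$ to accommodate. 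This is where the delicate singular-fibre analysis lives. For case (c), your ``movable $\Rightarrow$ effective'' step is unjustified on a threefold; the paper instead computes directly (Proposition~\ref{prop:isotrivial}) that $\zeta|_Y$ is not pseudoeffective whenever $f$ is isotrivial and not almost smooth, so $Y$ lies in the negative part of the divisorial Zariski decomposition, contradicting the nef-in-codimension-one hypothesis and reducing to the almost smooth or non-isotrivial cases already handled.
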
 

Note that each of the cases requires a different proof, Theorem \ref{theoremmain} is obtained as the union of Corollary \ref{cor:noniso}, Corollary \ref{cor:isotrivialstandard} and
Corollary \ref{cor:Zar}.

In general, the pseudoeffectivity of  $\Omega^{[1]}_X$ does not imply $\tilde q(X) > 0$: 
a smooth complete intersection surface $X \subset \PP^N$ is simply connected, so $\tilde q(X)=0$. However, if $N \geq 4$ and the multidegrees are sufficiently high, the
cotangent bundle $\Omega^1_X$ is ample \cite{Bro14}.

For surfaces of general type, Conjecture \ref{conj:nv} is open. 
If $c_1^2(X) > c_2(X) $, then by Bogomolov's vanishing theorem
$$
h^0(X, S^m\Omega_X^1)) \sim \bigl( \frac{c_1^2(X)-c_2(X)}{6} \bigr ) m^3, 
$$
but already the boundary case $c_1^2(X) = c_2(X)$ is unclear. 

In higher dimension, things get worse due to the singularities of minimal models. For example, we know Conjecture \ref{conj:nv} for terminal threefolds
with numerically trivial canonical class, but we cannot deduce easily Conjecture \ref{conj:nv} for smooth threefolds $X$ with $\kappa (X) = 0$, although
$X$ has a terminal minimal model as above. 

We would finally like to point out the connection to a question posed by H.Esnault, see \cite{BKT13}:  
let $X$ be a projective (or compact K\"ahler) manifold whose fundamental group
$\pi_1(X)$ is infinite. Does there exist a positive integer $m$ such that 
$$ 
H^0(X,S^m\Omega^1_X) \ne 0?
$$ 
An intermediate step might be to prove that $\Omega^1_X$ is pseudoeffective. 
Brunebarbe-Klingler-Totaro confirm Esnault's conjecture
if  there is a representation 
$$\pi_1(X) \to \mbox{GL}(N,\mathbb C)$$
with infinite image. The key point of their proof is to show that in many cases, the cotangent bundle $\Omega^1_X$ is even big. 

Theorem \ref{theoremmain} fits in the framework of Esnault's conjecture: it is well-known to experts that if the cotangent bundle of an elliptic surface is not pseudoeffective, then $\pi_1(X)$ is finite (see Appendix \ref{appendixfinite} for a proof). In view of Theorem \ref{theoremmain}, the two properties should actually be equivalent (which is actually the case up to the exceptional isotrivial case of Theorem \ref{theoremmain}) and imply the existence of symmetric differentials.

\subsection{Strategy of the proof}
Let $X$ be a smooth projective surface such that $K_X$ is nef and $c_1(K_X)^2=0$. Then $K_X$ is semiample, so we have the Iitaka fibration
$$
\holom{f}{X}{B}
$$
such that the general fibre $F$ is elliptic and $m K_X \simeq f^* A$ with $A$ an ample divisor. Assume now that $\Omega_X := \Omega^1_X$ is pseudoeffective, then one expects that there exists a pseudoeffective subsheaf of $\Omega_X$ that is induced by a pull-back
from the base $B$. Let $f^* \Omega_B \rightarrow \Omega_X$ be the cotangent map, and denote by 
$$
f^* \Omega_B(D) \subset \Omega_X
$$
the saturation. If $f^* \Omega_B(D)$ is pseudoeffective, Proposition \ref{propA} shows that $\tilde q(X)>0$. Thus the main issue in Theorem \ref{theoremmain} is to show that $f^* \Omega_B(D)$ is pseudoeffective. A natural approach is to show that the
sheaf $\Omega_X \rightarrow \omega_{X/B}(-D)$ is not  pseudoeffective if $f$ is not almost smooth\footnote{See the introduction of Section \ref{sectionelliptic} for the almost smooth case.}.
However, by a theorem of Brunella \cite{Bru06}, the line bundle $\omega_{X/B}(-D)$ is always pseudoeffective! This leads us to considering the 
more refined quotient sheaf
$$
\Omega_X \rightarrow \sI_Z \otimes \omega_{X/B}(-D) \rightarrow 0,
$$
where $Z$ has support in the singular points of the reduction of the fibres. The basic idea of the proof of Theorem \ref{theoremmain} 
is to show that the torsion-free sheaf $\sI_Z \otimes \omega_{X/B}(-D)$ is not strongly pseudoeffective (see Definition \ref{definitionpseff}), although 
its bidual is a pseudoeffective line bundle. Thanks to a result of Demailly-Peternell-Schneider \cite{DPS94} this idea leads immediately
to the result in the non-isotrivial case, see Section \ref{sectionelliptic}.  
For an isotrivial fibration this approach only yields the weaker statement appearing
as part c) of the main theorem, see Subsection \ref{subsectionisotrivialzariski}.
On the other hand we know that $X$ is birational to a quotient $(C \times E)/G$, 
so we aim to compute explicitly the spaces of global sections
$$
H^0(X, S^i \Omega_X \otimes \sO_X(jA)),
$$
following a strategy introduced by Sakai \cite{Sak79}. Apart from the technical setup, the main difficulty is to understand the local obstruction near the fixed points of the group action. For $A_1$-singularities this information is provided by \cite[Prop.3.2.]{BTV19}.
We expect that a similar description of the local obstruction for klt singularities 
would allow to handle the case when the elliptic curve has complex multiplication.

\subsection{Structure of the paper}

In Section 3 we introduce a positivity notion (``strongly pseudoeffective'') that is adapted for this type of torsion-free sheaf,
and present material on pseudoeffective torsion free sheaves which will be used in later sections. 

Section 4 is concerned with some general results on varieties with pseudoeffective cotangent sheaves. In particular, generalised Kodaira dimensions are introduced 
and a relation to the MRC fibration is studied. The last two sections are devoted to the proof of Theorem \ref{theoremmain}. Section 5 gives the general setup 
and settles the case that the elliptic fibration is not isotrivial. 
The surprisingly difficult isotrivial case finally is studied in Section \ref{sectionisotrivial}.

{\it Acknowledgements.} 
The first-named author thanks the Institut Universitaire de France and the A.N.R. project Foliage (ANR-16-CE40-0008) for providing excellent working conditions. The second-named author was supported by
a DFG grant "Zur Positivit\"at in der komplexen Geometrie". We thank C. Gachet for pointing out the Example \ref{examplenotstrongly}.

\section{Basic notations}

We work over the complex numbers, for general definitions we refer to \cite{Har77}. 
We use the terminology of \cite{Deb01} and \cite{KM98}  for birational geometry and notions from the minimal model program and \cite{Laz04a} for notions of positivity.
Manifolds and varieties will always be supposed to be irreducible and reduced.

\begin{notation} {\rm Let $X$ be a normal complex variety. As usual, $\Omega^1_X$ denotes the sheaf of K\"ahler differentials, and we set 
$$ 
\Omega^{[q]}_X := (\bigwedge^q\Omega^1_X)^{**}.
$$
If $X$ is klt and $\mu:\hat X \to X$ a is desingularization, then by \cite[Thm.1.4]{GKKP11},
$$ \Omega^{[q]}_X = \mu_*(\Omega^q_{\hat X}).$$
If $q = 1$ and $X$ smooth, we simply set $\Omega_X := \Omega^1_X$. 
Finally, for any normal variety $X$,  we denote by $T_X:= (\Omega_X^1)^{*}$ its tangent sheaf. 
}
\end{notation}

A finite surjective map $\holom{\gamma}{X'}{X}$ between normal varieties is {\it quasi-\'etale} if its ramification divisor
is empty (or equivalently, by purity of the branch locus, $\gamma$ is \'etale over the smooth locus of $X$).

\begin{definition} \label{definitiontildeq} Let $X$ be a normal projective variety with klt singularities. Then, as usual,
$$ q(X) = h^1(X,\sO_X) = h^0(X,\Omega^{[1]}_X) $$
is the irregularity of $X$. 
Further, we denote  by $\tilde q(X) $ the maximal irregularity $q(\tilde X)$, where
$\tilde X \to X$ is any quasi-\'etale cover. 
\end{definition}

While the irregularity $q(X)$ is a 
birational invariant of projective varieties with klt singularities, this is not the case for $\tilde q(X)$:

\begin{example} \label{exampleproducttype}
Let $\holom{\tau}{E_1}{\PP^1}$ be a (hyper)elliptic curve, and denote by $i_{E_1}$ the involution induced by the double cover.
Let $E_2$ be an elliptic curve, and denote by $i_{E_2}$ the involution defined by $z \mapsto -z$.
The surface  
$$
X' :=(E_1 \times E_2)/\langle i_{E_1} \times i_{E_2} \rangle
$$
is  normal and has
$A_1$-singularities in the branch points of the quasi-\'etale map $E_1 \times E_2 \rightarrow X'$.
The projection on the first factor induces an isotrivial elliptic fibration 
$$
f': X' \rightarrow \PP^1=E_1/\langle i_{E_1} \rangle
$$
that has exactly
$2g(E_1)+2$ singular fibres, all of them are multiple fibres of multiplicity $2$
such that the reduction is isomorphic to $\PP^1=E_2/\langle i_{E_2} \rangle$. 
By construction we have $\tilde q(X') \geq g(E_1) + g(E_2)>0$.

Denote by $\holom{\mu}{X}{X'}$ the minimal resolution, then the induced elliptic fibration $\holom{f}{X}{\PP^1}$ is relatively minimal,
isotrivial and has exactly $2g(E_1)+2$ singular fibres, all of them of type $I_0^*$ (in Kodaira's terminology, see \cite[V, Table 3]{BHPV04}).

In the classical case where $E_1$ is an elliptic curve, the surface $X$ is a K3 surface of Kummer type. In particular we have 
$\tilde q(X)=0$.
\end{example}

\section{Pseudoeffective sheaves}

\begin{notation} {\rm Let $\sG$ be a coherent sheaf on a variety $X$, and let $\sT \subset \sG$ its torsion subsheaf. Then we denote by
$\sG/\mbox{Tor}$ the quotient $\sG/\sT$. 
Furthermore, we set  $$S^{[m]}(\sG) := (S^m \sG)^{**}.$$
}
\end{notation} 

\subsection{Projectivization of sheaves} 

\begin{definition} \label{definitionproj}
Let $\sF$ be a coherent sheaf on a variety $X$. Then we denote by $\holom{\pi}{\PP(\sF)}{X}$ the projectivisation of $\sF$ in the sense of \cite[II, \S 2, Sect.2]{AT82}. 

We denote by $\zeta_{\PP(\sF)}$ (or $\zeta$ when no confusion is possible) the Cartier divisor class associated to the tautological
line bundle $\sO_{\PP(\sF)}(1)$.
\end{definition}

\begin{remark*}
The reduction of any fibre of $\pi$ is a projective space, and $\pi$ is locally trivial if and only if $\sF$ is locally free \cite[p.27]{AT82}.
\end{remark*}

For locally free sheaves, the following definition of pseudoeffectivity is  now in common.

\begin{definition} \label{definitionpseffvb}
Let $X$ be a  projective variety, 
and let $\sF$ be a locally free sheaf on $X$.  Denote by $\holom{\pi}{\PP(\sF)}{X}$ the projectivisation, and by $\zeta$ the tautological class on $\PP(\sF)$. 
We say that $\sF$ is pseudoeffective if $\zeta$ is a pseudoeffective Cartier divisor class.
\end{definition}

\begin{remark*}
By \cite[Lemma 2.7]{Dru18}, the locally free sheaf
$\sF$ is pseudoeffective if and only if 
for some ample Cartier divisor $H$ on $X$ and for all $c>0$ there exist numbers $j \in \N$ and $i \in \N$ such that $i>cj$ and
$$
H^0(X, S^i \sF \otimes \sO_X(jH)) \neq 0.
$$
\end{remark*}

We will use the following lemma, which will be generalised below. 

\begin{lemma} \label{lemma:func} Let $f: X \to Y$ be a surjective morphism of projective varieties and $\sF$ a locally free sheaf on $Y$. Then 
$\sF$ is pseudoeffective if and only if $f^*(\sF)$ is pseudoeffective. 
\end{lemma}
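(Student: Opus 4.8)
The plan is to reduce to the criterion stated in the remark after Definition \ref{definitionpseffvb}: $\sF$ is pseudoeffective if and only if for some ample divisor $H$ on $Y$ and all $c>0$ there exist $i,j\in\N$ with $i>cj$ and $H^0(Y,S^i\sF\otimes\sO_Y(jH))\neq 0$, and similarly for $f^*\sF$ on $X$. So I need to compare the section modules $\bigoplus_{i,j}H^0(Y,S^i\sF\otimes\sO_Y(jH))$ and $\bigoplus_{i,j}H^0(X,S^i f^*\sF\otimes\sO_X(jf^*H))$. The forward direction is immediate: pulling back sections along $f$ gives an injection $H^0(Y,S^i\sF\otimes\sO_Y(jH))\hookrightarrow H^0(X,f^*(S^i\sF\otimes\sO_Y(jH)))=H^0(X,S^i f^*\sF\otimes\sO_X(jf^*H))$ (injectivity because $f$ is surjective and everything is reduced and irreducible, so $f^*$ on global sections of a locally free sheaf is injective), and $f^*H$ is nef and big, hence one may absorb it: if $\zeta_{\PP(f^*\sF)}$ dominates a negative multiple of $\pi^* f^* H$, it is still pseudoeffective since $f^*H$ is pseudoeffective. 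Alternatively, and more cleanly, observe $\PP(f^*\sF)=\PP(\sF)\times_Y X$ with projection $g:\PP(f^*\sF)\to\PP(\sF)$, and $\zeta_{\PP(f^*\sF)}=g^*\zeta_{\PP(\sF)}$; a pullback of a pseudoeffective class under a surjective morphism of projective varieties is pseudoeffective, which settles ``only if''.

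For the converse — $f^*\sF$ pseudoeffective $\Rightarrow$ $\sF$ pseudoeffective — I would argue on $\PP(\sF)$ directly. Since $f$ is surjective, so is $g:\PP(f^*\sF)\to\PP(\sF)$, and pushing forward a curve class or using that $g_*$ of an effective cycle is effective: if $\zeta_{\PP(f^*\sF)}=g^*\zeta_{\PP(\sF)}$ is pseudoeffective, then pairing against $g^*$ of a complete intersection curve $C$ in $\PP(\sF)$ and using the projection formula, $\zeta_{\PP(\sF)}\cdot C = \frac{1}{\deg}(g^*\zeta_{\PP(\sF)}\cdot g^*C)\geq 0$ — but this only controls one numerical pairing, not pseudoeffectivity. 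The correct route is: pseudoeffectivity of a Cartier class $D$ on a projective variety is equivalent to $D+\epsilon A$ being $\Q$-effective (up to numerical equivalence) for every ample $A$ and every $\epsilon>0$; equivalently $h^0(\PP(\sF),\sO(m(D+\epsilon A)))$ grows for suitable $m$. Pulling back, $g^*(D+\epsilon A)$ is then big on $\PP(f^*\sF)$, which is consistent, but I want to go the other way. So instead I would use the section-module criterion: by the remark, $f^*\sF$ pseudoeffective gives, for each $c$, integers $i>cj$ with $H^0(X,S^i f^*\sF\otimes\sO_X(jf^*H))\neq 0$. Now I invoke that $f_*\sO_X$ contains $\sO_Y$ as a direct summand is false in general, so instead I use $f_*\sO_X\supset \sO_Y$ as a subsheaf and the projection formula: $H^0(X,S^i f^*\sF\otimes\sO_X(jf^*H))=H^0(Y, S^i\sF\otimes\sO_Y(jH)\otimes f_*\sO_X)$.

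Here is the crux, and the step I expect to be the main obstacle: I need to conclude that $H^0(Y,S^i\sF\otimes\sO_Y(jH)\otimes f_*\sO_X)\neq 0$ forces $H^0(Y,S^i\sF\otimes\sO_Y(j'H))\neq 0$ for $j'$ not much bigger than $j$, so that the inequality $i>cj$ survives (with $c$ replaced by a slightly smaller constant, which is harmless since $c$ was arbitrary). The sheaf $f_*\sO_X$ is a coherent sheaf on $Y$, so it admits a surjection from $\sO_Y(d H)^{\oplus N}$ for some fixed $d$ depending only on $f$ and $H$; dualizing is not available, but instead: there is an injection $f_*\sO_X\hookrightarrow \sO_Y(dH)^{\oplus N}$ for suitable $d,N$ (twist up a surjection onto $f_*\sO_X$ from a negative twist of a free sheaf, using that $f_*\sO_X$ is torsion-free hence embeds in a free sheaf after inverting, then clear denominators; more precisely $\mathcal Hom(f_*\sO_X,\sO_Y)$ is nonzero since $f_*\sO_X$ has a trace/structure map, and one can build $f_*\sO_X\hookrightarrow\sO_Y(dH)^{\oplus N}$). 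Granting such an embedding, $S^i\sF\otimes\sO_Y(jH)\otimes f_*\sO_X\hookrightarrow (S^i\sF\otimes\sO_Y((j+d)H))^{\oplus N}$, so a nonzero section of the left produces a nonzero section of $S^i\sF\otimes\sO_Y((j+d)H)$. Then $i > cj \geq c'(j+d)$ for $j$ large (any $c'<c$), and since $c$ was arbitrary the criterion for $\sF$ is met. I would double-check that the embedding $f_*\sO_X\hookrightarrow\sO_Y(dH)^{\oplus N}$ genuinely exists for $f$ merely surjective (not flat), possibly by first replacing $Y$ by a normalization in a resolution or by noting $f_*\sO_X$ is a sheaf of algebras containing $\sO_Y$, whence $\mathcal{H}om_{\sO_Y}(f_*\sO_X,\sO_Y)\to\sO_Y$ hitting $1$ — this last point is the technical heart and is exactly what the ``generalised'' version promised after the lemma presumably streamlines.
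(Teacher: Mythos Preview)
Your forward direction is fine, and in fact the key observation you make---that $\PP(f^*\sF)=\PP(\sF)\times_Y X$ with $\zeta_{\PP(f^*\sF)}=g^*\zeta_{\PP(\sF)}$---is exactly how the paper proceeds. This reduces \emph{both} directions to the rank-one statement: for a surjective morphism $g:W\to V$ of projective varieties and a line bundle $L$ on $V$, one has $L$ pseudoeffective if and only if $g^*L$ is pseudoeffective. The paper then simply cites Lazarsfeld \cite[Thm.~2.2.26, Prop.~2.2.43]{Laz04a} for this standard fact and is done in three lines. You had this reduction in hand but abandoned it, thinking the curve-class pairing ``only controls one numerical pairing''; in fact that intuition can be completed (push forward a movable curve), but more to the point the line-bundle case is classical and needs no further work here.

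Your alternative section-module argument for the converse has a genuine gap. You write: ``by the remark, $f^*\sF$ pseudoeffective gives, for each $c$, integers $i>cj$ with $H^0(X,S^i f^*\sF\otimes\sO_X(jf^*H))\neq 0$.'' The remark after Definition~\ref{definitionpseffvb} (and its extension in \cite{HLS20}) requires the twisting divisor to be ample, or at least big. If $f$ has positive-dimensional fibres then $f^*H$ is nef but not big on $X$, and there is no embedding $\sO_X(A_X)\hookrightarrow\sO_X(m f^*H)$ for any ample $A_X$, so you cannot transfer the non-vanishing from an ample twist to an $f^*H$-twist. Concretely: with $f^*H$ only nef, $\zeta_{\PP(f^*\sF)}$ pseudoeffective does not a priori produce sections of $i\zeta+j\pi^*f^*H$ with $i\gg j$. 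Your subsequent steps (projection formula, the embedding $f_*\sO_X\hookrightarrow\sO_Y(dH)^{\oplus N}$, which does exist since $f_*\sO_X$ is torsion-free) are fine, but they are fed by this unjustified input.

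The gap is repairable: first cut $X$ by $\dim X-\dim Y$ general very ample divisors to obtain $X'\subset X$ with $f|_{X'}:X'\to Y$ generically finite and surjective; restriction of a pseudoeffective line bundle to a general very ample divisor remains pseudoeffective (write pseff $+$ $\epsilon\cdot$ample $=$ ample $+$ effective and restrict), so $\zeta_{\PP(f^*\sF)}|_{\PP((f|_{X'})^*\sF)}$ stays pseudoeffective. Now $(f|_{X'})^*H$ is big and your argument goes through verbatim. But once you have made this reduction you are essentially reproving the line-bundle case of the lemma, which is precisely what the paper outsources to Lazarsfeld.
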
 

\begin{remark*}
Note that the statement applies in particular to the normalisation, so for locally free sheaves pseudoeffectivity can be verified 
on the normalisation.
\end{remark*}

\begin{proof} 
Recall the  pull-back formula for the tautological classes
$$ \zeta_{\mathbb P(f^*(\sF))} = p^*(\zeta_{\mathbb P(\sF)}),$$
where $p: \mathbb P(f^*(\sF)) = \mathbb P(\sF) \times_Y X \to \mathbb P(\sF)$ is the canonical projection.
Thus we are reduced to the case where $\sF$ has rank one, which is immediate by \cite[Thm.2.2.26, Prop.2.2.43]{Laz04a}.
\end{proof}

\subsection{Strongly pseudoeffective torsion-free sheaves} 

For the purpose of this paper it is not sufficient to discuss the positivity of locally free sheaves, in fact we will need the more subtle
positivity properties of torsion-free sheaves. It will suffice to consider normal varieties. 

We first recall the definition of pseudoeffectivity for reflexive sheaves from \cite{Dru18} and \cite[Defn.2.1]{HP19}.

\begin{definition} \label{def:reflexive}  
Let $X$ be a normal projective variety, 
and let $\sF$ be a reflexive sheaf on $X$.  
Then $\sF$ is pseudoeffective
if for some ample Cartier divisor $H$ on $X$ and for all $c>0$ there exist numbers $j \in \N$ and $i \in \N$ such that $i>cj$ and
$$
H^0(X, S^{[i]} \sF \otimes \sO_X(jH)) \neq 0.
$$ 
\end{definition} 

\begin{remark*}
An equivalent definition using an adapted resolution of singularities of $\PP(\sF)$ is given in \cite{HP19}. 
\end{remark*}

\begin{example} \label{examplesubsheaf}
Our definition of pseudoeffectivity is less restrictive than \cite[Defn.7.1]{BDPP13}: if $\sG \subset \sF$ is a pseudoeffective reflexive subsheaf, then $\sF$ is pseudoeffective. In particular if $\sF = L \oplus H$ where $L$ is pseudoeffective and $H$ an antiample reflexive sheaf, then 
$\sF$ is pseudoeffective in the sense of Definition \ref{def:reflexive}, but not in the sense of \cite[Defn.7.1]{BDPP13}.
\end{example}

Definition \ref{def:reflexive} makes also sense for torsion-free sheaves, but would not be very useful: by definition a torsion-free sheaf would
be pseudoeffective if and only if its bidual is pseudoeffective. The following definition takes this difference into account:

\begin{definition} \label{definitionpseff}
Let $X$ be a normal projective variety, 
and let $\sF$ be a torsion free sheaf on $X$.  
We say that $\sF$ is  strongly pseudoeffective
if for some ample Cartier divisor $H$ on $X$ and for all $c>0$ there exist numbers $j \in \N$ and $i \in \N$ such that $i>cj$ and
$$
H^0(X, (S^{i} \sF)/\mbox{\rm Tor} \otimes \sO_X(jH)) \neq 0.
$$ 
\end{definition}

\begin{remark}
For locally free sheaves, the Definitions \ref{definitionpseffvb}, \ref{def:reflexive}, \ref{definitionpseff} obviously coincide. 
Even for reflexive sheaves, Definition \ref{definitionpseff} is more restrictive than Definition \ref{def:reflexive}: 
in general $(S^{i} \sF)/\mbox{Tor}$ is not reflexive and has less global sections than its bidual, so we might have 
$$
H^0(X, (S^{i} \sF)/\mbox{Tor} \otimes \sO_X(jH)) = 0,
$$ 
although $\sF$ is pseudoeffective in the sense of Definition \ref{def:reflexive}. 
We thank C. Gachet for the following example:
\end{remark}

\begin{example} \label{examplenotstrongly}
Let 
$$
\C^2 \rightarrow X=\{ (x,y,z) \in \C^3 \ | \ xy-z^2=0 \}, \ (u,v) \ \mapsto \ (u^2, v^2, uv)
$$
be the double cover of the $A_1$-singularity, we identify the polynomial ring of $X$ to its image 
$\C[u^2, v^2, uv]$ in $\C[u,v]$. The invariant elements under the involution 
$$
\holom{j}{\C[u,v]}{\C[u,v]}, \ f(p) \mapsto - f(-p) 
$$
are exactly the odd polynomials, i.e. the polynomials that can be written as
$u f + v g$
with $f,g \in \C[u^2, v^2, uv]$. Denote this set by $\C[u,v]^{\Z_2}$, then $\C[u,v]^{\Z_2}$ has a natural structure of
$\C[u^2, v^2, uv]$-module that is reflexive, but not (locally) free. The tensor power $(\C[u,v]^{\Z_2})^{\otimes 2}$
is generated by $u^2, v^2, uv$, so it naturally embeds into $\C[u^2, v^2, uv]$.
Remembering that this ring is actually the function ring of the $A_1$-singularity, we see that
$(\C[u,v]^{\Z_2})^{\otimes 2}$ is isomorphic to the maximal ideal defining the origin.

Let now $\holom{q}{A}{X}$ be the quotient of an abelian surface under the involution $z \mapsto -z$, so $X$ is the singular Kummer surface. Since $S^{[2]} \Omega_X$ is globally generated, the sheaf of reflexive differentials $\Omega_X^{[1]}$
is pseudoeffective. Let us see that it is not strongly pseudoeffective:
we have $\Omega_X^{[1]} \simeq \sF \oplus \sF$, where $\sF$ is the sheaf of $\Z_2$-invariants for the natural action on $\Omega_A \simeq \sO_A d z_1 \oplus \sO_A d z_2 $. It is immediate to see that, near the fixed points, the $\Z_2$-action on  $\sO_A d z_l$ identifies to the action $j$ in the paragraph above. Thus, using the local computation, we see that
$$
\sF^{\otimes i} \simeq 
\left\{
\begin{array}{lll} 
\sI_{X_{\sing}}^{\frac{i}{2}} & \mbox{if } i \mbox{ even},
\\ 
\sI_{X_{\sing}}^{\frac{i-1}{2}} \otimes \sF & \mbox{if } i \mbox{ odd}.
 \end{array}
\right.
$$
Combined with Example \ref{exampleidealsheaf} this shows that $\sF$ is not strongly pseudoeffective.
\end{example}

In general it is not clear if one can check
strong pseudoeffectivity by looking at a tautological class on (a modification of) the projectivisation. However there is a natural construction in a special case:

\begin{setup} \label{setupideal} {\rm 
Let $\sF$ be a torsion-free sheaf on a normal projective variety $X$  such that
$$
\sF \simeq \sI_Z \otimes \sE
$$
where $\sI_Z$ is an ideal sheaf and $\sE$ is a locally free sheaf.

Let $\holom{\mu}{\hat X}{X}$ be the blow-up of the ideal sheaf $\sI_Z$, then $\hat X$ is a (not necessarily normal) variety \cite[II, Prop.7.16]{Har77}.
We denote by 
$$
\sO_{\hat X}(1):=\fibre{\mu}{\sI_Z}\sO_{\hat X} 
$$
the tautological sheaf on $\hat X$.
Recall that by the definition of the blow-up \cite[II, \S 3]{AT82} one has
\begin{equation} \label{pushblowup}
\mu_* (\sO_{\hat X}(i)) = \sI_Z^i \qquad \forall \ i \geq 0.
\end{equation}
Note also that if $Z$ is locally generated by a regular sequence, one has 
$S^i \sI_Z \simeq \sI_Z^{i}$ for all $i \geq 0$ (e.g. \cite[Prop.2.2.8]{BC18}). In particular 
the blowup $\mbox{Bl}_{\sI_Z}(X)$ coincides with the projectivisation $\PP(\sI_Z)$.
}
\end{setup}

\begin{lemma} \label{lemmablowupstronglypseff}
In the situation of Setup \ref{setupideal}, the torsion-free sheaf $\sF$ is strongly pseudoeffective if and only if the locally free sheaf $\sO_{\hat X}(1) \otimes \mu^* \sE$ on the variety $\hat X$  is pseudoeffective.
\end{lemma}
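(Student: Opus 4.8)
The plan is to unwind both notions of positivity in terms of global sections and match them up via the push-forward formula \eqref{pushblowup}. First I would recall from Definition \ref{definitionpseff} that $\sF = \sI_Z \otimes \sE$ is strongly pseudoeffective if and only if for some (equivalently, any) ample $H$ on $X$ and all $c > 0$ there are $i > cj$ with $H^0(X, (S^i\sF)/\mathrm{Tor} \otimes \sO_X(jH)) \neq 0$. The first reduction is to identify $(S^i(\sI_Z \otimes \sE))/\mathrm{Tor}$ with $\mu_*(\sO_{\hat X}(i) \otimes \mu^*\sE^{\otimes i})$ up to saturation issues; more precisely I want to show these two sheaves agree outside a closed subset of codimension $\geq 2$, or at least that they have the same spaces of global sections after twisting by $\sO_X(jH)$. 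The point is that $(S^i\sF)/\mathrm{Tor}$ is the image of $S^i\sF \to (S^i\sF)^{**} = S^{[i]}(\sI_Z\otimes\sE)$, and away from $Z$ everything is locally free, so the comparison is forced by torsion-freeness on the one hand and the projection formula $\mu_*(\sO_{\hat X}(i) \otimes \mu^*\sE^{\otimes i}) = \mu_*(\sO_{\hat X}(i)) \otimes \sE^{\otimes i} = \sI_Z^i \otimes \sE^{\otimes i}$ on the other. The natural surjection $S^i \sI_Z \twoheadrightarrow \sI_Z^i$ kills exactly the torsion, so $(S^i\sF)/\mathrm{Tor} = \sI_Z^i \otimes \sE^{\otimes i}$, at least when $\sE$ is locally free (which it is) — this is the key identification.

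Granting that identification, I would next translate pseudoeffectivity of the locally free sheaf $\sL := \sO_{\hat X}(1) \otimes \mu^*\sE$ on $\hat X$ into the section-theoretic criterion. Here I must be slightly careful because $\hat X$ is only assumed to be a (possibly non-normal) variety, but $\sL$ is locally free, so Definition \ref{definitionpseffvb} and the Drut{}el-type remark after it still apply: $\sL$ is pseudoeffective iff for some ample $\hat H$ on $\hat X$ and all $c > 0$ there exist $i > cj$ with $H^0(\hat X, S^i\sL \otimes \sO_{\hat X}(j\hat H)) \neq 0$. Taking $\hat H$ of the form $\mu^* H - \epsilon E$ for the exceptional-type class (or just working with $\mu^*H$ plus a correction and using that pseudoeffectivity is independent of the auxiliary ample, by Lemma \ref{lemma:func}-style arguments), I would push forward: $H^0(\hat X, S^i\sL \otimes \mu^*\sO_X(jH)) = H^0(X, \mu_*(\sO_{\hat X}(i) \otimes \mu^*\sE^{\otimes i}) \otimes \sO_X(jH)) = H^0(X, \sI_Z^i \otimes \sE^{\otimes i} \otimes \sO_X(jH))$, using \eqref{pushblowup} and the projection formula once more. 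Combined with the previous paragraph, the right-hand side is $H^0(X, (S^i\sF)/\mathrm{Tor} \otimes \sO_X(jH))$, so the two families of nonvanishing statements coincide and the equivalence follows.

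The main obstacle I anticipate is bookkeeping around the auxiliary ample divisor and the non-normality of $\hat X$. On the $\hat X$ side the tautological class $\sO_{\hat X}(1)$ is only $\mu$-nef, not ample, so to even state pseudoeffectivity I need a genuine ample $\hat H$ on $\hat X$; I would take $\hat H = m\mu^*H - E$ for suitable $m$ (with $E$ the exceptional Cartier divisor cutting out $\mu^{-1}\sI_Z\cdot\sO_{\hat X}$), note that $\sO_{\hat X}(1) = \sO_{\hat X}(-E)$, and absorb the $\mu^*H$ contributions into the base twist — the inequality $i > cj$ is robust under replacing $j$ by $j' = mj$ and $c$ by $c/m$. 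One should also check that $\mu_*$ commutes with the relevant twists and that higher direct images don't interfere with $H^0$, which is automatic since $H^0$ only sees $\mu_*$. Finally, if $Z$ is not locally a complete intersection, the identification $S^i\sI_Z/\mathrm{Tor} \cong \sI_Z^i$ still holds because the natural map $S^i\sI_Z \to \sI_Z^i$ is surjective with torsion kernel (both sheaves agree on the dense open $X \setminus Z$ where $\sI_Z$ is invertible), so the argument does not actually need the complete-intersection hypothesis mentioned at the end of Setup \ref{setupideal} — I would remark on this but not belabor it.
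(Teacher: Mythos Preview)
Your approach is essentially the paper's: identify $(S^i\sF)/\mathrm{Tor}$ with $\sI_Z^i \otimes S^i\sE$ via the surjection $S^i\sI_Z \twoheadrightarrow \sI_Z^i$ with torsion kernel (the paper cites \cite{Mic64} for this), then match global sections on $X$ and $\hat X$ by the projection formula and \eqref{pushblowup}. Two small corrections. First, you repeatedly write $\sE^{\otimes i}$ where you mean $S^i\sE$: since $\sO_{\hat X}(1)$ is invertible one has $S^i(\sO_{\hat X}(1)\otimes\mu^*\sE)=\sO_{\hat X}(i)\otimes\mu^*S^i\sE$, and correspondingly $(S^i\sF)/\mathrm{Tor}\simeq \sI_Z^i\otimes S^i\sE$, not $\sI_Z^i\otimes\sE^{\otimes i}$. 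Second, your manoeuvre with $\hat H = m\mu^*H - E$ is unnecessary: the paper simply tests pseudoeffectivity of $\sO_{\hat X}(1)\otimes\mu^*\sE$ against the \emph{big} pull-back $\mu^*H$ and invokes \cite[Lemma 2.7]{Dru18} (or \cite[Lemma 2.2]{HLS20} for the big case), which gives the section criterion directly and avoids all the bookkeeping you anticipate.
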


\begin{proof} 
Let $H$ be an ample Cartier divisor on $X$.
By the projection formula and \eqref{pushblowup} one has
$$
\mu_* (\mu^* (\sO_X(jH) \otimes S^i \sE) \otimes \sO_{\hat X}(i))
\simeq 
\sO_X(jH) \otimes S^i \sE \otimes \sI_Z^i 
$$
for all $i \geq 0$. Moreover we know by \cite{Mic64} that
$$
(S^i \sI_Z)/\mbox{Tor} \simeq \sI_Z^{i}.
$$
Thus we obtain that
$$
H^0(\hat X, S^i(\sO_{\hat X}(1) \otimes \mu^* \sE) \otimes \sO_{\hat X}(j \mu^* H))
\simeq
H^0(X, (S^{i} \sF)/\mbox{Tor} \otimes \sO_X(jH))
$$
for all $i \geq 0$. Now we apply \cite[Lemma 2.7]{Dru18} (see \cite[Lemma 2.2.]{HLS20} for the case where the divisor is only big).
\end{proof}

\begin{example} \label{exampleidealsheaf}
Let $X$ be a normal projective variety, and let $I_Z \subset \sO_X$ an ideal sheaf. Then $\sI_Z$ is not strongly pseudoeffective by Lemma \ref{lemmablowupstronglypseff}.

On the other hand let $Z \subset \PP^2$ be a point, then $\sI_Z \otimes \sO_{\PP^2}(1)$
is strongly pseudoeffective. Indeed the locally free sheaf
$\sO_{\hat X}(1) \otimes \mu^* \sO_{\PP^2}(1)$
on the blowup $\hat X \simeq \mathbb F_1$ is pseudoeffective, since it is isomorphic
to $\sO_{\mathbb F_1}(F)$ where $F$ is the strict transform of a line through $Z$.
\end{example}

\begin{corollary} \label{corollarypsefftautological}
In the situation of Setup \ref{setupideal}, suppose that the ideal sheaf $\sI_Z$ is locally generated by a regular sequence (e.g. if $Z$ is a locally complete intersection scheme). Then the following statements are equivalent:
\begin{enumerate}
\item The sheaf $\sF$ is  strongly pseudoeffective;
\item The locally free sheaf $\sO_{\hat X}(1) \otimes \mu^* \sE$ on the blow-up 
$$
\mu: \hat X = \mbox{Bl}_{\sI_Z} X \rightarrow X,
$$
  is pseudoeffective; 
\item The tautological line bundle $\sO_{\PP(\sF)}(1)$ on the projectivisation $\PP(\sF)$ is pseudoeffective.
\end{enumerate}
\end{corollary}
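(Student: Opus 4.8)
The plan is to deduce the three equivalences from Lemma~\ref{lemmablowupstronglypseff} together with a comparison of two projectivisations. The equivalence of (a) and (b) is exactly Lemma~\ref{lemmablowupstronglypseff}, and that part uses nothing about $\sI_Z$; so the only thing to prove is (b)$\Leftrightarrow$(c) under the regular-sequence hypothesis. By Setup~\ref{setupideal} this hypothesis gives $S^i\sI_Z\simeq\sI_Z^i$, hence $\hat X=\mathrm{Bl}_{\sI_Z}X$ coincides with $\PP(\sI_Z)$ and the tautological sheaf $\sO_{\hat X}(1)=\fibre{\mu}{\sI_Z}\sO_{\hat X}$ is an invertible ideal sheaf; in particular $\sG:=\sO_{\hat X}(1)\otimes\mu^*\sE$ is locally free on the (integral, projective) variety $\hat X$. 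Writing $\holom{p}{Y}{\hat X}$ for the projective bundle $Y:=\PP_{\hat X}(\sG)$ with tautological class $\zeta_Y$, Definition~\ref{definitionpseffvb} says that (b) is equivalent to the pseudoeffectivity of $\zeta_Y$.

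The key step is to construct a surjective morphism $\holom{h}{Y}{\PP(\sF)}$ over $X$ with $h^*\zeta_{\PP(\sF)}=\zeta_Y$. On $Y$ the universal quotient is $p^*\sG\twoheadrightarrow\sO_Y(1)$, which after twisting by the line bundle $p^*\sO_{\hat X}(-1)$ reads $p^*\mu^*\sE\twoheadrightarrow\sO_Y(1)\otimes p^*\sO_{\hat X}(-1)$; tensoring this surjection with the pull-back to $Y$ of the defining surjection $\mu^*\sI_Z\twoheadrightarrow\sO_{\hat X}(1)$ of the blow-up yields a surjection
$$
(\mu\circ p)^*\sF\ =\ p^*\mu^*\sI_Z\otimes p^*\mu^*\sE\ \twoheadrightarrow\ \sO_Y(1)
$$
onto a line bundle, so the universal property of $\PP(\sF)=\PP_X(\sF)$ produces $h$ with $h^*\sO_{\PP(\sF)}(1)\simeq\sO_Y(1)$. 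Over the dense open set $X\setminus\supp(Z)$ the map $\mu$ is an isomorphism and $\sO_{\hat X}(1)$ is trivial, so $h$ restricts to an isomorphism there; since $h$ is a morphism of projective varieties its image is closed, hence $h$ is surjective.

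It then remains to conclude exactly as in the proof of Lemma~\ref{lemma:func}: for a surjective morphism of projective varieties, a line bundle on the target is pseudoeffective if and only if its pull-back is pseudoeffective (\cite[Thm.2.2.26, Prop.2.2.43]{Laz04a}). Applied to $h$, this gives that $\zeta_{\PP(\sF)}=\sO_{\PP(\sF)}(1)$ is pseudoeffective if and only if $\zeta_Y=h^*\zeta_{\PP(\sF)}$ is, which by the previous paragraph is (c)$\Leftrightarrow$(b). The step I expect to be the main obstacle is the construction and analysis of $h$: one must check that the tensor product of the two surjections above is still surjective onto $\sO_Y(1)$, that $h$ does intertwine the tautological classes, and — in order to run the pull-back argument of \cite{Laz04a} — that $\PP(\sF)$ is an irreducible reduced projective variety with no component lying over $\supp(Z)$, so that $\PP(\sE)|_{X\setminus\supp(Z)}$ is dense in $\PP(\sF)$ and $h$ is genuinely surjective; this last point is where the regular-sequence hypothesis really enters. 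Once $h$ is in place, the two positivity equivalences are purely formal.
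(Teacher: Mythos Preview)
Your argument is correct and lands on the same morphism as the paper, but the paper finishes it off differently. In the paper the equivalence (b)$\Leftrightarrow$(c) is obtained by showing that your $h$ is actually an \emph{isomorphism}: writing $\hat\pi:\PP(\mu^*\sE)\to\hat X$ (which is your $Y$, since $\sG$ is a line-bundle twist of $\mu^*\sE$), one checks that $\hat\pi^*\sO_{\hat X}(1)\otimes\sO_{\PP(\mu^*\sE)}(1)$ is $(\mu\circ\hat\pi)$-ample and then computes, using $S^i\sI_Z\simeq\sI_Z^i$, that
\[
(\mu\circ\hat\pi)_*\bigl(\hat\pi^*\sO_{\hat X}(i)\otimes\sO_{\PP(\mu^*\sE)}(i)\bigr)\ \simeq\ \sI_Z^i\otimes S^i\sE\ \simeq\ S^i(\sI_Z\otimes\sE)\ =\ S^i\sF,
\]
so the Proj of these direct images over $X$ identifies $\PP(\mu^*\sE)$ with $\PP(\sF)$ and matches $\zeta_Y$ with $\zeta_{\PP(\sF)}$. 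This direct-image/relative-Proj computation is where the regular-sequence hypothesis enters for the paper, and it simultaneously disposes of the issue you flag: once $\PP(\sF)\simeq Y$, it is automatically an integral projective variety with no extra components over $\supp(Z)$, so no separate verification is needed.

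Your route --- construct $h$ via the universal property, show surjectivity from the generic isomorphism, and then invoke the pull-back characterisation of pseudoeffectivity as in Lemma~\ref{lemma:func} --- is perfectly valid, and arguably more transparent about \emph{why} the map exists. The trade-off is exactly the obstacle you identify: you must still argue that $\PP(\sF)$ is irreducible and reduced before the Lazarsfeld pull-back statement applies. The cleanest way to discharge that is precisely the paper's direct-image computation (which upgrades your surjection to an isomorphism); alternatively one can argue locally that $\PP(\sI_Z\otimes\sE)\simeq\PP(\sI_Z^{\oplus r})\simeq\mathrm{Bl}_{\sI_Z}X\times_X\PP^{r-1}$ when $\sI_Z$ is generated by a regular sequence. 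Either way the missing step is small, and both approaches coincide once it is filled in.
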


\begin{proof}
The equivalence between a) and b) is shown in Lemma \ref{lemmablowupstronglypseff}. 

Denote by $\holom{\hat \pi}{\PP(\mu^* \sE)}{\hat X}$ the projectivisation, and by $\sO_{\PP(\mu^* \sE)}(1)$ 
its tautological sheaf. Since  $\sO_{\hat X}(1)$ is $\mu$-ample
and $\sO_{\PP(\mu^* \sE)}(1)$ is $\hat \pi$-ample and $\mu \circ \hat \pi$-nef, 
we know that $\hat \pi^*(\sO_{\hat X}(1)) \otimes \sO_{\PP(\mu^* \sE)}(1)$
is $\mu \circ \hat \pi$-ample. Since $\sI_Z$ is locally generated by a regular sequence, we have $S^i \sI_Z \simeq \sI_Z^{i}$ for all $i \geq 0$
(e.g. \cite[Prop.2.2.8]{BC18}). 
Thus for all $i \geq 0$ we have
$$
(\mu \circ \hat \pi)_* ((\hat \pi)^* (\sO_{\hat X}(i)) \otimes \sO_{\PP(\mu^* \sE)}(i))
\simeq 
\mu_* (\sO_{\hat X}(i) \otimes \mu^* S^i \sE)
\simeq
\sI_Z^{i} \otimes S^i \sE
\simeq S^i (\sI_Z \otimes \sE).
$$
Therefore we have an isomorphism $\psi:  \PP(\mu^* \sE) \rightarrow \PP(\sF)$
such that $\psi^* \sO_{\PP(\sF)}(1) = \hat \pi^*(\sO_{\hat X}(1)) \otimes \sO_{\PP(\mu^* \sE)}(1)$.
Thus b) and c) are equivalent.
\end{proof}

\subsection{Generalised Kodaira-Iitaka dimensions and functoriality} 

In this subsection we introduce a Kodaira-Iitaka dimension for reflexive sheaves and establish functoriality. 

\begin{definition}  \label{def:Kodaira-1} Let $X$ be a normal projective variety, and let $\sF$ be a reflexive sheaf on $X$. 
Let $\holom{\pi}{P}{X}$ be a desingularization of the normalization of
the unique component $\mathbb P'(\sF)$  of $\mathbb P(\sF)$ dominating $X$ such that 
the preimage of the singular locus of $X$ and of the singular locus of the sheaf $\sF$ is a divisor in $P$.  Let $\zeta$ be a tautological class on $P$, \cite[Defn.2.2]{HP19}. 
Then we define
$$
\kappa (X, \sF) = \kappa (P, \zeta).
$$
\end{definition} 

By construction $\kappa (P, \zeta) \geq 0$ if and only if $H^0(X,S^{[m]}(\sF)) \neq 0$ for some $m \in \N$.

\begin{remark*}
If $\pi: P \to X$ denotes the projection, then 
$$ H^0(P, \sO_P(m\zeta)) = H^0(X,S^{[m]}(\sF)) $$
for all positive numbers $m$, and therefore the definition is independent on the choices made. 
\end{remark*}

We will use Definition \ref{def:Kodaira-1} in Section 4 to introduece a generalised Kodaira dimenion of $X$ (Definition \ref{def:Kodaira-2}).

The next result generalizes Lemma \ref{lemma:func} for finite morphisms.

 \begin{lemma} \label{lemmacover} Let $f: \widetilde X \to X$ be a finite morphism of normal projective varieties. Let $\sF$ be a reflexive sheaf on $X$. 
Then the following holds:
\begin{itemize}
\item  The reflexive pullback $f^{[*]}(\sF)$ is pseudoeffective if and only if $\sF$ is pseudoeffective. 
\item One has $\kappa(\widetilde X, f^{[*]}(\sF)) = \kappa(X, \sF)$.
\end{itemize} 
 \end{lemma}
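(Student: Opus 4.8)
The strategy is to reduce both assertions to the corresponding statements for tautological classes on projectivisations and then invoke the functoriality for divisor classes under finite morphisms. First I would replace $\widetilde X \to X$ by its Stein factorisation to assume $f$ is finite \emph{and} surjective (if $f$ is not surjective, $f^{[*]}\sF$ is a reflexive sheaf on the normalisation of $f(\widetilde X)$, and one can argue on each component, but in the intended applications $f$ is already surjective, e.g. a quasi-\'etale cover). The key input is Definition \ref{def:Kodaira-1}: choose a desingularisation $\pi\colon P \to X$ of the normalisation of the dominating component $\PP'(\sF)$ such that the preimage of $X_{\sing} \cup \sing(\sF)$ is a divisor, and let $\zeta$ be a tautological class on $P$, so that $\kappa(X,\sF) = \kappa(P,\zeta)$ and $H^0(P,\sO_P(m\zeta)) = H^0(X, S^{[m]}\sF)$ for all $m$.

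The heart of the argument is to produce a compatible object on the $\widetilde X$ side. I would form the fibre product $P \times_X \widetilde X$, normalise, desingularise, and pass to a component $\widetilde P$ dominating $\widetilde X$ equipped with a generically finite morphism $g\colon \widetilde P \to P$ over $f$. Because $f^{[*]}\sF = (f^*\sF)^{**}$ agrees with $f^*\sF$ away from a closed set of codimension at least two and pulls back tautological sections, the class $g^*\zeta$ is (up to the adjustments built into Definition \ref{def:Kodaira-1}, i.e. after a further blow-up making the relevant preimages divisorial) a tautological class for $f^{[*]}\sF$ on a model $\widetilde P$ as in that definition. Concretely, one checks $H^0(\widetilde P, \sO_{\widetilde P}(m\, g^*\zeta)) = H^0(\widetilde X, S^{[m]} f^{[*]}\sF)$ for all $m$: the left side equals $H^0(\widetilde X, (f_* \sO_{\widetilde P}(m g^*\zeta)))$ and one compares with $f^*$ of the corresponding sheaf on $X$ using that reflexive pullback commutes with $S^{[m]}$ up to reflexive hull and that $f$ is finite. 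This identification simultaneously gives both conclusions: pseudoeffectivity of $\sF$ is equivalent to $\zeta$ being pseudoeffective on $P$, which by Lemma \ref{lemma:func} applied to the generically finite (after Stein factorisation, finite) morphism $g$ — or directly by \cite[Thm.2.2.26, Prop.2.2.43]{Laz04a} for the line bundle $\sO(\zeta)$ — is equivalent to $g^*\zeta$ being pseudoeffective, hence to pseudoeffectivity of $f^{[*]}\sF$; and $\kappa(X,\sF) = \kappa(P,\zeta) = \kappa(\widetilde P, g^*\zeta) = \kappa(\widetilde X, f^{[*]}\sF)$, the middle equality being the standard invariance of Iitaka dimension of a line bundle under generically finite pullback (sections inject one way, and a multiple of the trace/norm gives the bound the other way, using that $g$ is finite so $g_*\sO_{\widetilde P}$ contains $\sO_P$ as a direct summand generically).

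The main obstacle I anticipate is the bookkeeping that the pulled-back model $\widetilde P$ genuinely satisfies the requirements of Definition \ref{def:Kodaira-1} for $f^{[*]}\sF$ — namely that we have correctly identified the dominating component of $\PP(f^{[*]}\sF)$ and that $g^*\zeta$ is an honest tautological class there rather than merely a pullback of one. This requires knowing how $\PP(\sF)$ behaves under finite pullback: the natural map $\PP(f^*\sF) \to \PP(\sF) \times_X \widetilde X$ is an isomorphism on the locus where $\sF$ is locally free, and on the dominating component the tautological classes match. Handling the non-locally-free locus — where $f^{[*]}\sF$ and $f^*\sF$ may differ and the projectivisations need not agree — is where care is needed; the saving grace is that all the cohomology in Definition \ref{def:reflexive} and Definition \ref{def:Kodaira-1} only sees the reflexive hulls $S^{[m]}$, which are insensitive to modifications in codimension at least two, so after passing to a sufficiently high model both sides compute the same spaces of sections. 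Once that is set up, the pseudoeffectivity equivalence and the Iitaka-dimension equality both fall out of the rank-one functoriality already recorded in the proof of Lemma \ref{lemma:func}.
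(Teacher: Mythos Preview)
Your overall architecture---form the fibre product $P \times_X \widetilde X$, desingularise to get $\widehat P$ with a generically finite map $g$ to $P$, and compare tautological classes---is exactly the paper's. The gap is in your central claim that $g^*\zeta$ itself is a tautological class for $f^{[*]}\sF$, i.e.\ that $H^0(\widehat P,\sO_{\widehat P}(m\,g^*\zeta)) = H^0(\widetilde X, S^{[m]} f^{[*]}\sF)$ for all $m$. This amounts to asserting that the direct image $\widehat\pi_*\sO_{\widehat P}(m\,g^*\zeta)$ is reflexive, and there is no reason it should be: the resolution $\widehat P \to P \times_X \widetilde X$ introduces exceptional divisors over the non--locally-free locus of $\sF$, and sections of the reflexive hull need not extend across them. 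Your proposed remedy (``a further blow-up making the relevant preimages divisorial'') does not address this; the issue is reflexivity of a direct image, not divisoriality of a locus.

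The paper fixes this with Nakayama's criterion \cite[III.5.10.3]{Nak04}: there is an effective $\widehat\pi$-exceptional divisor $\widehat D$ such that $\widehat\pi_*\sO_{\widehat P}(m(\widehat\zeta+\widehat D))$ is reflexive for all $m$, so that $\widehat\zeta+\widehat D$ (and not $\widehat\zeta$) is a tautological class for $f^{[*]}\sF$. This breaks your chain of equivalences precisely in the hard direction: from pseudoeffectivity of $f^{[*]}\sF$ you only get pseudoeffectivity of $\widehat\zeta+\widehat D$, and since $\widehat D$ is effective you cannot strip it off to conclude for $\widehat\zeta$. The paper instead takes the Stein factorisation $\widehat P \stackrel{\mu}{\to} P' \stackrel{p_1'}{\to} P$, pushes $\widehat D$ forward and dominates it by $(p_1')^*D$ for an effective $\pi$-exceptional divisor $D$ on $P$; then $\zeta+D$ is pseudoeffective (resp.\ has the right Iitaka dimension), and because $D$ is $\pi$-exceptional one has $\pi_*\sO_P(m(\zeta+D))\simeq S^{[m]}\sF$, closing the loop back to $\sF$. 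This Nakayama twist and the subsequent absorption of $\widehat D$ into a $\pi$-exceptional divisor on $P$ are the missing ingredients in your plan; once they are in place your line-bundle functoriality arguments go through.
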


 \begin{proof}  Let $\holom{\pi}{P}{X}$ be the projective manifold from Definition \ref{def:Kodaira-1},
 and $\zeta$ a tautological class on $P$. By the construction of $\zeta$ (see  \cite[Defn.2.2]{HP19}), we have
\begin{equation} \label{propertytautological}
 \pi_*(\sO_P(m \zeta)) \simeq S^{[m]}(\sF)
\end{equation}
 for all positive integers $m$. 
We introduce the fibre product
 $$ 
 \widetilde P := P \times_X \widetilde X.
 $$
 Let $\sigma: \widehat P \to \widetilde P$ be a desingularization, which is an isomorphism outside the singular locus of $\widetilde P$
 and set $\widehat \zeta := \sigma^* p_1^*(\zeta)$,
 where $p_1: \widetilde P \to P$ denotes the projection. 
 Let $\widetilde \pi: \widetilde P \to \widetilde X $ be the projection and $\widehat \pi := \widetilde \pi \circ \sigma$.

{\it Claim:} There exists a $\widehat \pi$-exceptional divisor $\widehat D$ on $\widehat P$, such that 
$$ 
\widehat \pi_*(\sO_{\widehat P}(m (\widehat \zeta + \widehat D))) \simeq S^{[m]}f^{[*]}(\sF)
$$ 
 for all $m \in \N$.

{\em Proof of the claim:}
By \cite[III.5.10.3]{Nak04} there exists $\widehat \pi$-exceptional divisor $\widehat D$ on $\widehat P$ such that
$$
\widehat \pi_*(\sO_{\widehat P}(m (\widehat \zeta + \widehat D)))
$$
is reflexive for all $m \in \N$.

Let $X_0 \subset X$ be the locus where $X$ is smooth and $\sF$ is locally free. Since $X$ is normal and  $\sF$ is reflexive, the complement of $X_0$ has codimension at least two.
Set now $\widetilde X_0=\fibre{f}{X_0}$. Since $X_0$ is smooth, the restriction $f|_{\widetilde X_0}$ is flat.
Thus by flat base change \cite[III, Prop.9.3]{Har77} we have 
an isomorphism 
$$ 
\widetilde \pi_*(p_1^*(\sO_P(m\zeta)) \simeq f^{*} S^{[m]}(\sF)
$$
over $\widetilde X_0$. Since $\sF$ is locally free on $X_0$, we have
$$
f^{*} S^{[m]}(\sF)  \simeq f^{[*]} S^{[m]}(\sF)
$$
over $\widetilde X_0$. 
Thus $\widehat \pi_*(\sO_{\widehat P}(m (\widehat \zeta + \widehat D)))$ and $f^{[*]} S^{[m]}(\sF)$ are isomorphic
over $\widetilde X_0$. 
Since they are both reflexive, they are isomorphic : indeed the complement of $\widetilde X_0$ has codimension at least two,
since $f$ is finite.
For the same reason we have
$$  
f^{[*]} S^{[m]}(\sF) \simeq S^{[m]}f^{[*]}(\sF),
$$
which shows the claim.

{\em Proof of the first statement:}
If $\sF$ is pseudoeffective, fix an ample Cartier divisor $AH$ on $X$. 
Since $f$ is finite, a section of $S^{[i]} \sF \otimes \sO_X(jH)$ pulls back to a section
$S^{[i]} (f^{[*]}(\sF)) \otimes \sO_X(j f^* H)$. Thus $f^{[*]}(\sF)$ is pseudoeffective, by Definition \ref{def:reflexive}.

Assume now that $f^{[*]}(\sF)$ is pseudoeffective,  so the divisor class $\widehat \zeta + \widehat D$ is pseudoeffective by \cite[Lemma 2.3]{HP19}.
Let now $\holom{\mu}{\hat P}{P'}$ and $\holom{p_1'}{P'}{P}$ be the Stein factorisation of the generically finite morphism $p_1 \circ \sigma$, i.e. $\mu$ is birational onto the normal variety $P'$ and $p_1'$ is finite.
Then $\widehat \zeta = (p_1 \circ \sigma)^* \zeta = (p_1' \circ \mu)^* \zeta$,
so 
$$
\mu_* (\widehat \zeta + \widehat D) = (p_1')^* \zeta + \mu_*(\widehat D)
$$
is a pseudoeffective Weil divisor class (cf. \cite[II, Defn.5.5]{Nak04}). 
Setting $D= (p_1')_* \mu_*(\widehat D)$, we have an inclusion of Weil divisors
$\mu_*(\widehat D) \subset (p_1')^* D$. Thus we have an inclusion of Weil divisor classes
$$
(p_1')^* \zeta + \mu_*(\widehat D)
\subset 
(p_1')^* (\zeta +D), 
$$
which shows that $(p_1')^* (\zeta +D)$ is pseudoeffective. Since $\zeta+D$ is Cartier, this shows that $\zeta+D$ is pseudoeffective. 
Since $\widehat D$ is $\widehat \pi$-exceptional, the effective divisor $D$ is $\pi$-exceptional.
By \eqref{propertytautological} we  thus have
$$
\pi_*( \sO_P(m (\zeta + D)))^{**} =  \pi_*( \sO_P(m \zeta)^{**} \simeq S^{[m]}(\sF)
$$
for all $m \in \N$. This shows that
$$ 
S^{[m]}(\sF) \simeq  \pi_*( \sO_P(m \zeta)  \hookrightarrow 
\pi_*( \sO_P(m (\zeta + D))) \hookrightarrow \pi_*( \sO_P(m (\zeta + D)))^{**}  \simeq S^{[m]}(\sF)
$$ 
is a chain of isomorphisms for all $m \in \N$. Hence the reflexive sheaf $\sF$ is pseudoeffective by \cite[Lemma 2.3]{HP19}. 

{\em Proof of the second statement:}
As for the first statement, the inequality $\kappa(\widetilde X, f^{[*]}(\sF)) \geq \kappa(X, \sF)$ is immediate.
Let us show the other inequality: by the claim we know that $\widehat \zeta + \widehat D$ is a tautological class on $\hat P$.
Thus by assumption, one has
 $$ 
 \kappa(\hat P, \widehat \zeta + \widehat D) = \kappa((\widetilde X, f^{[*]}(\sF))) \geq 0.
 $$ 
Hence $\kappa (P', (p_1')^*(\zeta) + \mu_*(\widehat D)) =  \kappa((\widetilde X, f^{[*]}(\sF))) \geq 0$, and,
 $$ 
 \kappa(P', (p_1')^*(\zeta + D)) \geq \kappa (P', (p_1')^*(\zeta) + \mu_*(\widehat D)),
 $$
 where we use again the inclusion $(p_1')^* \zeta + \mu_*(\widehat D)
\subset (p_1')^* (\zeta +D)$. 
Since 
 $$ 
 \kappa(P', (p_1')^*(\zeta + D)) = \kappa(P, \zeta + D)),
 $$
 by \cite[Thm 5.13]{Ue75}, the statement follows.
 \end{proof} 

In Section 4 this will be applied to sheaves of reflexive differentials, Corollary {propcover2}. 

\subsection{Pseudoeffective sheaves on fibered surfaces} \label{subsectionfibered}

The results of this section will be relevant to the study of elliptic surfaces. Let us recall the Zariski decomposition on surfaces \cite[Thm.]{Bau09}, \cite[Thm.2.3.19]{Laz04a}:
\begin{itemize}
\item Let $D$ be an effective $\Q$-divisor on a smooth surface. Then there exist uniquely determined effective $\Q$-divisor $P$ and $N$ with
$$
D = P + N
$$ 
such that $P$ is nef, the divisor $N= \sum a_j N_j$ is zero or has negative definite
intersection matrix and $P \cdot N_j=0$ for all $j$.
\item The same statement holds if $D$ is pseudoeffective, except that in this case $P$ is not necessarily effective.
\end{itemize}

The following lemma is well-known to experts, we give the details in order to prepare the proof of its singular version in Lemma \ref{lemS2}.

\begin{lemma} \label{lemS1}  Let $X$ be a smooth projective surface, and let $f: X \to B$ be a fibration over a smooth curve $B$. 
Let $L$ be a pseudoeffective line bundle on $X$ such that $L_F \simeq \sO_F$ for the general fiber $F$ of $f$. 
Let $D$ be a Cartier divisor such that $L \simeq \sO_X(D)$, and let 
$$
D = P + N
$$
its Zariski decomposition. Then the following holds:
\begin{enumerate}
\item Up to taking multiples, one has $\sO_X(P) \simeq f^* M$ with $M$ a nef line bundle.
\item If $P \not\equiv 0$, one has $\kappa(L) \geq 1$. 
\item If $P \equiv 0$, there exists $m \in \N$ and
a numerically trivial line bundle $M$ on $B$ such that $\kappa (X, L^{\otimes m} \otimes f^* M^*) = 0$.
\end{enumerate}
\end{lemma}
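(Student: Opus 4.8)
The plan is to analyze the Zariski decomposition $D = P + N$ via intersection with the general fiber $F$ and then exploit negativity of $N$ to force $P$ to be a pullback. First I would observe that since $L_F \simeq \sO_F$ we have $D \cdot F = 0$, hence $P \cdot F = -N \cdot F$. Because $P$ is nef we have $P \cdot F \geq 0$, and because the components $N_j$ of $N$ are effective and $F$ is nef (being a fiber), $N \cdot F \geq 0$; together these give $P \cdot F = N \cdot F = 0$. So every component $N_j$ satisfies $N_j \cdot F = 0$, which means each $N_j$ is contained in a fiber of $f$; likewise $P$ is a nef divisor with $P \cdot F = 0$.

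For part a), the key step is: a nef divisor $P$ on $X$ with $P \cdot F = 0$ is, up to numerical equivalence and rational multiples, a pullback from $B$. I would argue this by the Hodge index theorem / the structure of the intersection form on the span of fiber components: restricting $P$ to a single fiber $F_0$ (possibly reducible, possibly multiple) one gets a nef divisor of degree zero on $F_0$, hence numerically trivial on each component, so $P$ is numerically trivial on all fibers; by the theory of fibered surfaces (the intersection form on $\langle$fiber components$\rangle$ is negative semidefinite with kernel spanned by the full fibers) $P$ is numerically a rational combination of fibers, i.e. $P \equiv f^* M_0$ for a $\Q$-divisor $M_0$ on $B$. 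Since $P$ is nef, $M_0$ is nef on the curve $B$, i.e. has nonnegative degree. Passing to a multiple to clear denominators and to promote numerical to linear equivalence on the fibers (using that $\mathrm{Pic}^0$ considerations on $B$ only affect the choice of $M$), one gets $\sO_X(kP) \simeq f^* M$ with $M$ nef. I should be a little careful here about the difference between numerical and linear equivalence — that is exactly what forces the "up to taking multiples" in the statement and the "numerically trivial twist" in part c).

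For part b), if $P \not\equiv 0$ then $M$ is a nef line bundle on the curve $B$ of positive degree, hence ample, so $\kappa(B, M) = 1$; since $\sO_X(mD) \supseteq \sO_X(mP) \simeq f^* M^{\otimes(m/k)}$ as subsheaves (because $N \geq 0$), we get $\kappa(L) = \kappa(X, \sO_X(D)) \geq \kappa(X, f^* M^{\otimes \bullet}) = \kappa(B, M) = 1$. For part c), if $P \equiv 0$ then from part a), $\sO_X(kP)$ is trivial on fibers and numerically trivial, so $\sO_X(kP) \simeq f^* M'$ with $M'$ a numerically trivial line bundle on $B$; thus $\sO_X(kD) \simeq f^* M' \otimes \sO_X(kN)$ with $N$ the negative part. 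Then $L^{\otimes k} \otimes f^*(M')^* \simeq \sO_X(kN)$, and a divisor with negative definite intersection matrix supported on fibers has $\kappa = 0$: its only multiples with sections are the multiples of $N$ itself (any effective divisor linearly equivalent to $mN$ must equal $mN$ by negative definiteness, a standard argument intersecting with the $N_j$). Setting $m = k$ and $M = M'$ gives $\kappa(X, L^{\otimes m} \otimes f^* M^*) = 0$.

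The main obstacle I anticipate is the bookkeeping in part a) around linear versus numerical equivalence: showing $P \equiv f^* M_0$ numerically is clean via the semidefiniteness of the fiber intersection lattice, but upgrading to an actual isomorphism $\sO_X(kP) \simeq f^* M$ requires knowing that a line bundle numerically equivalent to a pullback, and trivial on every fiber, is itself (a multiple of) a pullback — this uses that $R^0 f_* \sO_X = \sO_B$ and a seesaw/cohomology-and-base-change argument, and is the reason the statement only claims the conclusion "up to taking multiples." The negativity argument for $\kappa = 0$ of $N$ in part c) is routine but also needs the multiple, since $N$ itself need not be Cartier.
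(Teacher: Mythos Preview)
Your proof is correct and follows essentially the same path as the paper; parts b) and c) are identical. For part a) the paper is more direct: rather than first arguing numerically that $P \equiv \lambda F$ via Hodge index and then worrying about the upgrade, it immediately uses that $\sO_X(P)|_F \simeq \sO_F$ on the general fibre to conclude that $f_*\sO_X(P)$ is invertible, writes $\sO_X(P) = f^*(f_*\sO_X(P)) \otimes \sO_X(E)$ with $E$ effective and vertical, and then observes that $P^2 \geq 0$ (nefness) together with $E^2 \leq 0$ (Zariski's lemma) forces $E^2 = 0$, so a multiple of $E$ is a full pullback. This is exactly the ``cohomology-and-base-change'' step you anticipate in your final paragraph, but note that your phrase ``trivial on every fibre'' overstates what is available --- one only knows triviality on the \emph{general} fibre, which is precisely why the effective correction $E$ appears and why the $E^2 = 0$ argument is the crux rather than a straight seesaw.
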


\begin{proof}  
Note that the statements are invariant under taking multiples.

Up to replacing $L$ by some multiple, we can assume that $P$ and $N$ are Cartier divisors.
Since  $N$ is effective and
$$
\sO_F \simeq \sO_F(D) \simeq \sO_F(P) \otimes \sO_F(N)
$$
we see that $N$ has support in the fibres of $f$ and $\sO_F(N) \simeq \sO_F(P) \simeq \sO_F$. Thus the direct image sheaf
$f_* (\sO_X(P))$ is locally free of rank one, and we have
$$
\sO_X(P) = f^* (f_* \sO_X(P)) \otimes \sO_X(E)
$$
where $E$ is an effective divisor $E$, supported in fibers of $f$.
Since $P^2 \geq 0$ and $E^2 \leq 0$ \cite[III, Lemma 8.2]{BHPV04}, it follows $E^2 = 0$. Hence by (ibid) there exists a number $k$ such that $kE = f^*(H)$ with some effective divisor $H$. 
Thus, again up to replacing $L$ by some multiple, we have  $\sO_X(P) = f^* M$ for some line bundle $M$ on $B$. 

If $M \not\equiv 0$, it is ample on $B$, so 
$$
1= \kappa(M) = \kappa(P)= \kappa(L).
$$
If $P \equiv 0$, then $M \equiv 0$, so $\kappa(L \otimes f^* M^*) = \kappa(N)=0$.
\end{proof}

\begin{lemma} \label{lemS2}  
Let $\hat X$ be an irreducible reduced projective surface, and let $\hat f: \hat X \to B$ be a fibration over a smooth curve $B$
such that the general fibre $F$ is smooth.
Let $\hat L$ be a pseudoeffective line bundle on $\hat X$ such that $\hat L|_F \simeq \sO_F$. 
\begin{enumerate}
\item There exists $m \in \N$ and a numerically trivial line bundle $M$ on $B$ such that 
$h^0(\hat X, \hat L^{\otimes m} \otimes \hat f^* M) \geq 0$.
\item If $h^0 (\hat X, \hat L^{\otimes m} \otimes \hat f^* M) > 1$ for some numerically trivial line bundle $M$, then $h^0 (\hat X, \hat L^{\otimes k}) > 1$ for some $k \in \N$.
\end{enumerate}
\end{lemma}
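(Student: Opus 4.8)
The plan is to reduce both statements to a single line bundle on the base curve, namely the push-forward $\sL_m := \hat f_* \hat L^{\otimes m}$, after importing the smooth surface case Lemma \ref{lemS1} through a resolution. Let $\nu \colon X \to \hat X$ be the normalisation, $\mu \colon \tilde X \to X$ the minimal resolution, and put $\pi := \nu \circ \mu$, $\tilde f := \hat f \circ \pi$, $\tilde L := \pi^* \hat L$. Since $\pi$ is proper and birational, since the general fibre $F$ of $\hat f$ is smooth, and since $F$ avoids the finitely many points of $\hat X$ over which $\pi$ has positive-dimensional fibres and meets the isomorphism locus of $\pi$ in a dense open set, $\pi$ restricts to an isomorphism $\pi^{-1}(F) \cong F$ for general $F$. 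Hence $\tilde f \colon \tilde X \to B$ is a fibration with general fibre isomorphic to $F$, and, by Lemma \ref{lemma:func}, $\tilde L$ is a pseudoeffective line bundle on the smooth surface $\tilde X$ with $\tilde L|_{\tilde F} \simeq \sO_{\tilde F}$ for the general fibre $\tilde F$. I also record that $\hat f$ is flat (as $B$ is a smooth curve and $\hat X$ is reduced), that $\hat f_* \sO_{\hat X} = \sO_B$, and that for a Cartier divisor $C$ on $\hat X$ one has $\pi_* \pi^* C = C$, with $\pi_*$ carrying effective divisors to effective divisors.

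The key reduction is that $\sL_m$ is a line bundle on $B$: flatness of $\hat f$ makes $\hat f_* \hat L^{\otimes m}$ torsion-free, hence locally free, and its generic rank is $h^0(F, \hat L^{\otimes m}|_F) = h^0(F, \sO_F) = 1$, since by upper semicontinuity the locus in $B$ where $h^0$ jumps is a proper closed subset and so misses the generic point. The projection formula then gives, for every line bundle $M$ on $B$,
\[
H^0(\hat X, \hat L^{\otimes m} \otimes \hat f^* M) \;\simeq\; H^0(B, \sL_m \otimes M),
\]
and the multiplication maps $\hat f^* \sL_a \otimes \hat f^* \sL_b \to \hat L^{\otimes (a+b)}$ push down to injections $\sL_a \otimes \sL_b \hookrightarrow \sL_{a+b}$, so that $\deg \sL_{km} \ge k \deg \sL_m$ for all $k \ge 1$.

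To prove (a) I would run the smooth case on $\tilde X$. Write $\tilde L = \sO_{\tilde X}(\tilde D)$ with Zariski decomposition $\tilde D = \tilde P + \tilde N$, where $\tilde N \ge 0$ since $\tilde L$ is pseudoeffective; by Lemma \ref{lemS1}, after replacing $\tilde L$ by a power we may assume $\sO_{\tilde X}(\tilde P) = \tilde f^* M_1$ with $M_1$ nef on $B$ and $\tilde N$ effective and supported in fibres. If $\tilde P \not\equiv 0$, then $M_1$ is ample, $\pi^*(\hat L^{\otimes r} \otimes \hat f^* M_1^*) \simeq \sO_{\tilde X}(\tilde N)$ with $\tilde N \ge 0$, and pushing the corresponding linear equivalence down along $\pi$ gives $\hat L^{\otimes r} \simeq \hat f^* M_1 \otimes \sO_{\hat X}(\pi_* \tilde N)$ with $\pi_* \tilde N \ge 0$; replacing $r$ by a large multiple and using $\hat f_* \sO_{\hat X} = \sO_B$ yields $h^0(\hat X, \hat L^{\otimes r}) \ge h^0(B, M_1^{\otimes t}) > 0$. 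If $\tilde P \equiv 0$, then Lemma \ref{lemS1}(c) produces $m_0$ and a numerically trivial $M_0$ on $B$ with $\kappa(\tilde X, \tilde L^{\otimes m_0} \otimes \tilde f^* M_0^*) = 0$; a nonzero section of this bundle cuts out an effective divisor, which is vertical because it restricts trivially to the general fibre, and its image under $\pi_*$ exhibits $\hat L^{\otimes m_0} \otimes \hat f^* M_0^*$ as effective. In both cases we get $m$ and a numerically trivial $M$ with $H^0(\hat X, \hat L^{\otimes m} \otimes \hat f^* M) \ne 0$, which is (a).

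Statement (b) is then formal. If $h^0(B, \sL_m \otimes M) > 1$ for some numerically trivial $M$, then, since a line bundle of degree $\le 0$ on a curve has at most one section, $\deg \sL_m = \deg(\sL_m \otimes M) \ge 1$; hence $\deg \sL_{mk} \ge k \deg \sL_m \to \infty$, and Riemann--Roch on $B$ gives $h^0(\hat X, \hat L^{\otimes mk}) = h^0(B, \sL_{mk}) \to \infty$, in particular $> 1$ for $k$ large. I expect the main obstacle to be precisely the passage from the resolution $\tilde X$ back to the possibly non-normal surface $\hat X$: a priori $\pi^* \hat L$ can acquire sections that do not descend to $\hat X$, so one must produce the required sections on $\hat X$ itself rather than on $\tilde X$. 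What makes this go through is that in every case the relevant section on $\tilde X$ cuts out a vertical effective divisor, and its push-forward along the birational morphism $\pi$ is again an effective Cartier divisor on $\hat X$, in the correct linear equivalence class; the remaining ingredients --- flatness of $\hat f$, local freeness and superadditivity of $\sL_m$, and the curve estimates --- are routine.
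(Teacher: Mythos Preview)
Your argument for part (b) via the direct images $\sL_m=\hat f_*\hat L^{\otimes m}$ is correct and is in fact cleaner than the paper's somewhat implicit treatment. The paper instead reduces both statements simultaneously to showing that the Cartier class $\hat N:=\hat L\otimes\hat f^*M^{-1}$ on $\hat X$ is effective (up to a twist by the pull-back of a numerically trivial line bundle), but your superadditivity/Riemann--Roch argument on $B$ does the job for (b) directly.

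For part (a), however, there is a genuine gap, and it is precisely the one you flag yourself at the end. From $\pi^*(\hat L^{\otimes r}\otimes\hat f^*M_1^*)\simeq\sO_{\tilde X}(\tilde N)$ with $\tilde N\geq 0$ you conclude that $\hat L^{\otimes r}\simeq\hat f^*M_1\otimes\sO_{\hat X}(\pi_*\tilde N)$. This step fails in general when $\hat X$ is non-normal: the cycle $\pi_*\tilde N$ need not be Cartier, and even when it is, the map $\pi^*:\pic(\hat X)\to\pic(\tilde X)$ need not be injective, so two line bundles on $\hat X$ with isomorphic pull-backs can be distinct. Equivalently, $H^0(\tilde X,\pi^*L')\simeq H^0(\hat X,L'\otimes\pi_*\sO_{\tilde X})$ can be strictly larger than $H^0(\hat X,L')$ because $\sO_{\hat X}\subsetneq\pi_*\sO_{\tilde X}$. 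The vertical support of $\tilde N$ does not rescue this: the non-normal locus of $\hat X$ also lies in fibres, and there is no reason the section of $\pi^*L'$ should satisfy the gluing conditions along it. The same objection applies verbatim in your $\tilde P\equiv 0$ case.

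The paper avoids this by never pushing a divisor down. It defines $\hat N$ on $\hat X$ as the Cartier class $\hat L\otimes\hat f^*M^{-1}$ and uses exactly your observation that $\hat f_*\sO_{\hat X}(\hat N)$ is invertible of rank one to find, for $m\gg 0$, an \emph{effective} Cartier divisor $E\in|\hat N+mF|$ on $\hat X$ itself. One then decomposes $E=\hat f^*E_B+R$ on $\hat X$, pulls back to $\tilde X$, and uses the uniqueness of the negative part of the Zariski decomposition to identify $\mu^*R$ with $N$; comparing degrees on $B$ forces $E_B$ to have degree $m$, whence $\hat N\sim \hat f^*(\mbox{numerically trivial})+R$ with $R\geq 0$ on $\hat X$. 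This gives (a) (and also (b) in the paper's formulation). You have all the ingredients for this fix already in your setup; what is missing is to produce the effective divisor on $\hat X$ first and only then compare with the Zariski decomposition upstairs, rather than the other way round.
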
 

\begin{remark} \label{remarkrationalcurve}
We will frequently apply the lemma in the case where $B$ is a rational curve. In this case one  obtains $h^0 (\hat X, \hat L^{\otimes m}) > 0$ for some $m \in \N$.
\end{remark}

Since the dimension of the space of global sections is not necessarily invariant under normalisation, the statement requires some work:

\begin{proof}  
Note that the statement is invariant under taking multiples.

Let $\holom{\mu}{X}{\hat X}$ be the composition of normalisation and desingularisation, set $f := \hat f \circ \mu$.
Then $\mu^* \hat L =: \sO_X(D)$ is pseudoeffective, and we denote by
$$
D = P + N
$$
the Zariski decomposition. By Lemma \ref{lemS1}, a) we have, up to taking multiples, that $\sO_X(P) \simeq f^* M$
for some line bundle on $M$. Thus we see that $\hat L \simeq \hat f^* M \otimes \hat N$, where
$\hat N$ is a Cartier divisor on $\hat X$ such that $N \equiv \mu^* \hat N$. 
By Lemma \ref{lemS1}, b),c) it is sufficient to show that we can choose $\hat N$ to be effective.

{\em Proof of the claim.} 
Since 
$$
\hat N \simeq \hat L \otimes (\hat f)^* M^*,
$$
we have $\sO_F(\hat N) \simeq \sO_F$. Thus $(\hat f)_* \sO_{\hat X}(\hat N)$ is locally free of rank one, hence for some $m \gg 0$, we have
$$
H^0(\hat X, \sO_{\hat X}(mF) \otimes \sO_{\hat X}(\hat N)) 
\simeq
H^0(B, \sO_B(m) \otimes (\hat f)_* \sO_{\hat X}(\hat N)) \neq 0.
$$
Thus we can fix an effective Cartier divisor $E$ on $\hat X$ such that $\sO_{\hat X}(E) \simeq \sO_X(mF) \otimes \sO_{\hat X}(\hat N)$.
We can decompose
$$
E = f^* E_B + R
$$
where $E_B$ is an effective $\Q$-divisor on $B$ and 
$R$ is an effective divisor such that for every connected component $R' \subset R$ we have a strict set-theoretical inclusion
$R' \subsetneq \fibre{\hat f}{\hat f(R')}$.
Up to taking multiples and replacing $E_B$ by a linearly equivalent divisor, 
we can also suppose that $supp (f^* E_B) \subset X_{nons}$. 
Now observe that 
$$
\mu^* E = \mu^* f^* E_B + \mu^* R
$$
is a Zariski decomposition. Since
$$
\mu^* E \equiv m F + N. 
$$
is also a Zariski decomposition 
and the negative part of the Zariski decomposition is unique in the numerical equivalence class, we finally obtain $N = \mu^* R$.
\end{proof}

\begin{corollary} \label{corpseffkappa}  Let $X$ be a smooth projective surface, and let $f: X \to B$ a fibration over a smooth {\em rational} curve $B$. Let $Z \subset X$ be a local complete
intersection of codimension $2$. Let $L$ be a line bundle on $X$ such that $L_F \simeq \sO_F$ for the general fiber $F$ of $f$. 
Then $\sI_Z \otimes L$ is  strongly pseudoeffective if and only if $\kappa (X, \sI_Z \otimes L) \geq 0$, i.e., if there exists a positive integer $m$ such that 
$$ 
H^0(X,\sI_Z^m \otimes L^m) \ne 0.
$$ 
\end{corollary}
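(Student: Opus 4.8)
\emph{The plan} is to translate strong pseudoeffectivity of $\sI_Z \otimes L$ into pseudoeffectivity of a line bundle on the blow-up of $\sI_Z$, and then to apply the fibred-surface Zariski decomposition Lemma \ref{lemS2}. Since $Z$ is a local complete intersection of codimension two, the ideal sheaf $\sI_Z$ is locally generated by a regular sequence, so Setup \ref{setupideal} applies with $\sE = L$, and Corollary \ref{corollarypsefftautological} shows that $\sI_Z \otimes L$ is strongly pseudoeffective if and only if the line bundle $\hat L := \sO_{\hat X}(1) \otimes \mu^* L$ is pseudoeffective, where $\mu \colon \hat X = \mathrm{Bl}_{\sI_Z} X \to X$ is the blow-up. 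Here $\hat X$ is irreducible and reduced (locally the Rees algebra $\bigoplus_d \sI_Z^d$ is a subring of the domain $\sO_X[t]$) and $\mu$ is birational, being an isomorphism over the dense open set $X \setminus Z$; note however that $\hat X$ need not be smooth, which is why below we invoke Lemma \ref{lemS2} rather than Lemma \ref{lemS1}. Setting $\hat f := f \circ \mu \colon \hat X \to B$ we get, using $\mu_* \sO_{\hat X} = \sO_X$, a fibration over the rational curve $B$.

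Next I check the hypotheses of Lemma \ref{lemS2} and record the relevant pushforward. As $Z$ is finite, only finitely many fibres of $f$ meet $Z$, so a general fibre $\hat F$ of $\hat f$ lies over the locus where $\mu$ is an isomorphism and is mapped isomorphically onto a general (smooth) fibre $F$ of $f$; hence $\sO_{\hat X}(1)|_{\hat F} \simeq \sO_{\hat F}$ and $\mu^* L|_{\hat F} \simeq L|_F \simeq \sO_F$, so $\hat L|_{\hat F} \simeq \sO_{\hat F}$. Moreover, by the projection formula together with \eqref{pushblowup},
$$
\mu_*\bigl( \hat L^{\otimes m} \bigr) \;\simeq\; \mu_*\bigl( \sO_{\hat X}(m) \bigr) \otimes L^{\otimes m} \;\simeq\; \sI_Z^m \otimes L^{\otimes m} \qquad (m \ge 0),
$$
so that $H^0(\hat X, \hat L^{\otimes m}) \simeq H^0(X, \sI_Z^m \otimes L^{\otimes m})$ for every $m \ge 0$.

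Granting this, the two implications are immediate. If $H^0(X, \sI_Z^m \otimes L^{\otimes m}) \neq 0$ for some $m > 0$, then $\hat L^{\otimes m}$ is effective, hence $\hat L$ is pseudoeffective, hence $\sI_Z \otimes L$ is strongly pseudoeffective. Conversely, if $\sI_Z \otimes L$ is strongly pseudoeffective, then $\hat L$ is pseudoeffective; applying Lemma \ref{lemS2} to $\hat f \colon \hat X \to B$ and $\hat L$, together with Remark \ref{remarkrationalcurve} (here it is essential that $B$ be rational, so that a numerically trivial line bundle on $B$ is in fact trivial), yields $H^0(\hat X, \hat L^{\otimes m}) \neq 0$ for some $m > 0$, and therefore $H^0(X, \sI_Z^m \otimes L^{\otimes m}) \neq 0$, i.e. $\kappa(X, \sI_Z \otimes L) \ge 0$.

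The only genuinely delicate point is the one already flagged: the blow-up of a codimension-two local complete intersection is a possibly singular surface, so the plain surface Zariski decomposition (Lemma \ref{lemS1}) does not apply and must be replaced by its singular counterpart Lemma \ref{lemS2}; with that substitution in hand, and with the rationality of $B$ to kill the numerically trivial twist, every remaining step is formal.
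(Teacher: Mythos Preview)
Your proof is correct and follows essentially the same route as the paper: pass to the blow-up via Corollary \ref{corollarypsefftautological}, apply Lemma \ref{lemS2} through Remark \ref{remarkrationalcurve}, and push the resulting section back down using $\mu_* \sO_{\hat X}(m) \simeq \sI_Z^m$. Your explicit remark that $\hat X$ may be singular (so that Lemma \ref{lemS2} rather than Lemma \ref{lemS1} is needed) is a nice clarification; the paper's proof handles this implicitly since Remark \ref{remarkrationalcurve} already refers to Lemma \ref{lemS2}.
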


\begin{proof} One direction being obvious, so assume that $\sI_Z \otimes L$ is pseudoeffective. 
Let $$\mu: \hat X \to X$$
be the blow-up of $X$ along $Z$ and denote by $E$ the exceptional divisor.
Since $Z$ does not surject onto $B$, the general fibre of $f \circ \mu$ is smooth.
 
By Corollary \ref{corollarypsefftautological} the 
the line bundle $\sO_{\hat X}(1) \otimes \mu^*(L) \simeq \sO_{\hat X}(-E) \otimes \mu^*(L)$ is pseudoeffective. 
By Remark \ref{remarkrationalcurve} we see that there exists some $m \in \N$ such that
$$
H^0(\hat X, \sO_{\hat X}(-mE) \otimes \mu^*(L^{\otimes m})) \neq 0.
$$
Since $Z$ is a local complete intersection, we have $\mu_*(\sO_{\hat X}(-mE)) = \sI_Z^m$. Thus we conclude by the projection formula.

\end{proof}

\section{Pseudoeffective cotangent sheaves and the Nonvanishing Conjecture} 

In this section we gather some basic facts on the behaviour of pseudoeffective cotangent sheaves under 
birational maps and finite covers. We also establish a relation with the MRC fibration of the variety.

\begin{proposition}  \label{prop:bir}
Let $\mu: \hat X \to X$ be a birational morphism of normal projective varieties.
\begin{enumerate}
\item If $\Omega^{[q]}_{\widehat X}$ is pseudoeffective, so does $\Omega^{[q]}_X$.
\item Suppose that  $X$ is smooth. Then the converse also holds.
\end{enumerate}
\end{proposition}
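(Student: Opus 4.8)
The plan is to treat the two implications separately; (b) is essentially formal given the lemmas already available, whereas (a) needs a little care with reflexive sheaves on the singular variety $X$.

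For (a), I would start from the standard observation that, $X$ being normal and $\mu$ proper birational, $\mu$ is an isomorphism over an open set $U \subseteq X$ with $\codim(X \setminus U) \geq 2$, and I would shrink $U$ so that $U \subseteq X_{\mathrm{sm}}$. Over $U$ both $S^{[i]}\Omega^{[q]}_{\hat X}$ and $S^{[i]}\Omega^{[q]}_X$ restrict to the locally free sheaf $S^i\Omega^q_U$; since $\mu_*(S^{[i]}\Omega^{[q]}_{\hat X})$ is torsion-free and coincides with $S^i\Omega^q_U$ over $U$, while $S^{[i]}\Omega^{[q]}_X$ is reflexive and hence the extension of $S^i\Omega^q_U$ across the codimension-two set $X\setminus U$, one gets a natural inclusion of sheaves
$$
\mu_*\bigl(S^{[i]}\Omega^{[q]}_{\hat X}\bigr) \hookrightarrow S^{[i]}\Omega^{[q]}_X \qquad (i \geq 0).
$$
Then I would fix an ample Cartier divisor $H$ on $X$ and test the pseudoeffectivity of $\Omega^{[q]}_{\hat X}$ against the \emph{big} divisor $\mu^*H$ --- which is legitimate, since pseudoeffectivity of a reflexive sheaf may be checked against any big divisor in place of an ample one (cf.\ \cite[Lemma 2.2.]{HLS20}). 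This produces, for every $c>0$, integers $i>cj$ with $H^0(\hat X, S^{[i]}\Omega^{[q]}_{\hat X}\otimes\mu^*\sO_X(jH)) \neq 0$; by the projection formula this group is $H^0(X, \mu_*(S^{[i]}\Omega^{[q]}_{\hat X})\otimes\sO_X(jH))$, and the inclusion above (tensored with the line bundle $\sO_X(jH)$) sends it injectively into $H^0(X, S^{[i]}\Omega^{[q]}_X\otimes\sO_X(jH))$, which is therefore nonzero. That is exactly pseudoeffectivity of $\Omega^{[q]}_X$ in the sense of Definition \ref{def:reflexive}.

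For (b), with $X$ smooth we have $\Omega^{[q]}_X = \Omega^q_X$ locally free, so $\mu^*\Omega^q_X$ is locally free on $\hat X$. Functoriality of K\"ahler differentials gives $\mu^*\Omega^q_X \to \Omega^q_{\hat X}$; composing with $\Omega^q_{\hat X}\to(\Omega^q_{\hat X})^{**}=\Omega^{[q]}_{\hat X}$ yields a map $\mu^*\Omega^q_X\to\Omega^{[q]}_{\hat X}$ that is the identity over the isomorphism locus of $\mu$, hence generically an isomorphism, hence --- the source being torsion-free --- injective. So $\mu^*\Omega^q_X$ is a locally free, in particular reflexive, subsheaf of $\Omega^{[q]}_{\hat X}$. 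Now Lemma \ref{lemma:func} gives that $\Omega^q_X$ is pseudoeffective if and only if $\mu^*\Omega^q_X$ is, and Example \ref{examplesubsheaf} gives that a pseudoeffective reflexive subsheaf makes $\Omega^{[q]}_{\hat X}$ pseudoeffective; chaining these two facts proves the converse.

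I expect the only real obstacle to be in part (a): one needs that $\mu$ is an isomorphism in codimension one (this is where normality of $X$ is used), so that the reflexive sheaves $S^{[i]}\Omega^{[q]}$ up- and downstairs can be compared, together with the detection of pseudoeffectivity of $\Omega^{[q]}_{\hat X}$ through the non-ample divisor $\mu^*H$. If one prefers not to invoke the big-divisor version of Definition \ref{def:reflexive}, one can instead realise $\mu$ as a blow-up and pick an effective divisor $\hat E$ on $\hat X$ with $\sO_{\hat X}(-\hat E)$ relatively ample over $X$, so that $\hat H := \mu^*(NH)-\hat E$ is ample on $\hat X$ for $N\gg 0$ while $\sO_{\hat X}(\hat H)\hookrightarrow\mu^*\sO_X(NH)$; testing against the ample divisor $\hat H$ and running the same computation then works verbatim. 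Part (b), by contrast, is essentially formal once the natural inclusion $\mu^*\Omega^q_X \hookrightarrow \Omega^{[q]}_{\hat X}$ is in place.
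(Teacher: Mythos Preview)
Your proof is correct and follows essentially the same route as the paper. For (a), the paper uses exactly the alternative you describe at the end (choose $\hat H$ ample with $\hat H + E = \mu^*H$, $E$ effective exceptional, and test against $\hat H$), whereas your primary route tests directly against the big divisor $\mu^*H$ via \cite[Lemma 2.2]{HLS20}; both feed into the same inclusion $\mu_*(S^{[i]}\Omega^{[q]}_{\hat X}) \subset S^{[i]}\Omega^{[q]}_X$. For (b), the paper first reduces to $\hat X$ smooth using (a) and then uses the inclusion $\mu^*(S^i\Omega^q_X)\subset S^i\Omega^q_{\hat X}$ directly, while you avoid that reduction by composing into the reflexive hull $\Omega^{[q]}_{\hat X}$ and invoking Lemma \ref{lemma:func} together with Example \ref{examplesubsheaf}; this is a mild streamlining but not a different idea.
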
 

\begin{proof} a) We choose ample divisors $\hat H$ on $\hat X$ and $H$ on $X$ such that 
$$ 
\hat H  + E = \mu^*(H) 
$$
with $E$ an effective divisor supported on the exceptional locus of $\mu$. 
By assumption, for all $c > 0$, there are numbers $i$ and $j$ with $i > cj$ such that 
$$ 
H^0(\hat X, S^{[i]}\Omega_{\hat X}^{[q]} \otimes \sO_{\hat X}(j\hat H))\ne 0.
$$
In particular, 
$$
0 \ne H^0(\hat X, S^{[i]}\Omega_{\hat X}^{[q]} \otimes \sO_{\hat X}(j \mu^*H)) = H^0(X,\mu_*(S^{[i]}\Omega_{\hat X}^{[q]}) \otimes \sO_X(jH)).
$$
Since $\mu_*(S^{[i]}\Omega_{\hat X}^{[q]}) \subset S^{[i]}\Omega^{[q]}_X,$ we conclude. 

b) Suppose that $X$ is smooth. By a), we may assume $\hat X$ to be smooth as well. 
Then all the involved sheaves are locally free, in particular
$$ 
S^{i}\mu^*(\Omega^q_X)  = \mu^*(S^{i} \Omega^q_X) \subset S^{i}\Omega^q_{\hat X},
$$
and the claim follows. 
\end{proof} 

\begin{example}  Assertion \ref{prop:bir},b) fails in general if $X$ is singular, even if $X$ has only canonical singularities. In fact, the paper \cite{GKP16b} constructs a normal projective 
surface $X$ with the following properties.
\begin{itemize}
\item $X$ has only ADE singularities ;
\item the minimal desingularization  $\hat X$ is rationally connected ;
\item $H^0(X,S^{[2]}\Omega^1_X) \ne 0$.
\end{itemize} 
Thus $\Omega^1_{\hat X}$ is 
not pseudoeffective in contrast to $\Omega^{[1]}_X$. 

Another example is a K3 surface of Kummer type, see Example \ref{exampleproducttype}. 
\end{example} 

\begin{corollary} \label{cor:min} Let $X$ be a normal projective variety with klt singularities, 
and let $X \dasharrow X'$ be a composition of divisorial contractions and flips. 
If $\Omega^{[q]}_{X}$ is pseudoeffective, so does $\Omega^{[q]}_{X'}$.
\end{corollary}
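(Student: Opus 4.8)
The plan is to reduce to a single elementary step of the minimal model program. Write the given composition as $X = X_0 \dashrightarrow X_1 \dashrightarrow \cdots \dashrightarrow X_k = X'$; all the $X_i$ are normal projective with klt singularities, so by induction on $k$ it suffices to treat the case where $X \dashrightarrow X'$ is a single divisorial contraction or a single flip. If it is a divisorial contraction it is a birational morphism $X \to X'$, and then the assertion is precisely Proposition \ref{prop:bir}(a). Hence the only thing left is the flip case, where $X \dashrightarrow X'$ is an isomorphism in codimension one; this is where the real content lies.

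So assume $X \dashrightarrow X'$ is a flip, with small contractions $\phi : X \to Z$ and $\phi^+ : X' \to Z$. First I would record that there are open subsets $U \subseteq X$ and $U' \subseteq X'$, each with complement of codimension at least two, together with an isomorphism $\psi : U \xrightarrow{\sim} U'$ induced by the flip; since reflexive sheaves and their reflexive symmetric powers commute with restriction to open sets, one gets $S^{[i]}\Omega^{[q]}_X|_U \simeq S^{[i]}\Omega^{[q]}_{X'}|_{U'}$ for all $i$. Now fix an ample Cartier divisor $H \geq 0$ on $X$ witnessing the pseudoeffectivity of $\Omega^{[q]}_X$ in the sense of Definition \ref{def:reflexive}. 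Since $X \setminus U$ has codimension at least two, no component of the effective divisor $H$ is contained in it, so $\psi$ transports $H|_U$ to an effective Weil divisor $\bar H$ on $X'$ with $\bar H|_{U'} = \psi_*(H|_U)$; in particular $\sO_{X'}(j\bar H)|_{U'}$ is the line bundle corresponding to $\sO_X(jH)|_U$. As $S^{[i]}\Omega^{[q]}_X \otimes \sO_X(jH)$ is reflexive on $X$ and $\bigl(S^{[i]}\Omega^{[q]}_{X'} \otimes \sO_{X'}(j\bar H)\bigr)^{**}$ is reflexive on $X'$, and both varieties are normal, sections of these sheaves are determined by their restrictions to $U$ and $U'$ respectively; comparing over $U \simeq U'$ yields
$$
H^0\bigl(X', (S^{[i]}\Omega^{[q]}_{X'} \otimes \sO_{X'}(j\bar H))^{**}\bigr)
\;=\;
H^0\bigl(X, S^{[i]}\Omega^{[q]}_X \otimes \sO_X(jH)\bigr)
\qquad\text{for all } i,j \in \N.
$$

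The last step, and the main obstacle, is to convert the Weil divisor $\bar H$ into an honest ample Cartier divisor, since Definition \ref{def:reflexive} demands a fixed ample Cartier class. Fix any ample Cartier divisor $H'$ on $X'$. Applying Serre vanishing to the nonzero coherent sheaf $\sO_{X'}(-\bar H) \subset \sO_{X'}$ produces an integer $n$ with $H^0\bigl(X', \sO_{X'}(nH') \otimes \sO_{X'}(-\bar H)\bigr) \neq 0$, i.e. $nH' \sim \bar H + E'$ for an effective Weil divisor $E'$. Multiplying by the equation of $jE'$ then turns any nonzero section of $\bigl(S^{[i]}\Omega^{[q]}_{X'} \otimes \sO_{X'}(j\bar H)\bigr)^{**}$ into a nonzero section of the reflexive sheaf $S^{[i]}\Omega^{[q]}_{X'} \otimes \sO_{X'}(jnH')$. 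Finally, given $c > 0$, apply the pseudoeffectivity of $\Omega^{[q]}_X$ with the constant $cn$: this gives numbers $i > cnj$ for which the right-hand side of the displayed equality is nonzero, whence $H^0\bigl(X', S^{[i]}\Omega^{[q]}_{X'} \otimes \sO_{X'}(j'H')\bigr) \neq 0$ with $j' = jn$ and $i > cj'$. Therefore $\Omega^{[q]}_{X'}$ is pseudoeffective with witness $H'$, which settles the flip case and, by the initial reduction, the corollary. I expect the bookkeeping in this final paragraph — replacing the naturally transported big, non-Cartier divisor by an ample one at the cost of rescaling the growth constant $c$ — to be the only genuinely non-formal point; everything else is Proposition \ref{prop:bir}(a) together with the fact that pseudoeffectivity of a reflexive sheaf may be tested after deleting a codimension-two subset.
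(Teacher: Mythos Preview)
Your argument is correct and follows the same route as the paper: reduce to a single step, dispose of divisorial contractions via Proposition~\ref{prop:bir}(a), and for a flip use the isomorphism in codimension two to transport sections. The only difference is in the final bookkeeping: the paper observes that $\mu_*H$ is a big $\Q$-Cartier divisor on $X'$ and cites \cite[Lemma 2.2]{HLS20} for the fact that bigness (rather than ampleness) of the twisting divisor suffices in Definition~\ref{def:reflexive}, whereas you prove this step by hand by dominating the Weil divisor $\bar H$ with a multiple of an ample Cartier divisor $H'$ and absorbing the effective difference $E'$ into the section. Your version is a bit more self-contained and does not rely on $\bar H$ being $\Q$-Cartier (hence does not implicitly use that flips preserve $\Q$-factoriality); the paper's is shorter but outsources that point. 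One cosmetic remark: what you invoke to produce $n$ with $H^0(X',\sO_{X'}(nH'-\bar H))\neq 0$ is Serre's global generation theorem for coherent sheaves rather than Serre vanishing proper, but the conclusion is of course correct.
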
 

\begin{proof} By Proposition \ref{prop:bir},a) it suffices to treat the case of a flip $\mu: X \dasharrow X'$. 
Since a flip is an isomorphism in codimension two, one has
$$
H^0(X, S^{[i]} \Omega_X^{[q]} \otimes \sO_X(j H)) \simeq
H^0(X', S^{[i]} \Omega_{X'}^{[q]} \otimes \sO_{X'}(j \mu_* H))
$$
for all $i,j \in \N$. Thus the condition in Definition \ref{def:reflexive} holds for a big $\Q$-Cartier divisor, which is sufficient
(see \cite[Lemma 2.2]{HLS20}).
\end{proof}

\begin{proposition} \label{prop:cover}
Let $f: \widetilde X \to X$ be a finite surjective morphism of normal projective varieties. If $\Omega^{[q]}_X$ is pseudoeffective, so does $\Omega^{[q]}_{\widetilde X}$. 
If $f$ is quasi-\'etale, the converse also holds. 
\end{proposition}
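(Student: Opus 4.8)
The plan is to reduce both directions to the reflexive pullback statement in Lemma \ref{lemmacover}, applied to the reflexive sheaf $\sF = \Omega^{[q]}_X$, after identifying $f^{[*]}\Omega^{[q]}_X$ with $\Omega^{[q]}_{\widetilde X}$ over a suitable open locus. Concretely, let $X_0 \subset X$ be the open set where $X$ is smooth; since $X$ is normal, its complement has codimension at least two, and the same is true for $\widetilde X_0 := f^{-1}(X_0)$ because $f$ is finite. First I would establish the \emph{easy inclusion}: the differential of $f$ gives a natural morphism $f^*\Omega^1_X \to \Omega^1_{\widetilde X}$, hence $f^*\Omega^q_X \to \Omega^q_{\widetilde X}$, hence (taking biduals, or working over the locus where both sheaves are locally free and then extending) a morphism $f^{[*]}\Omega^{[q]}_X \to \Omega^{[q]}_{\widetilde X}$. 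Over $\widetilde X_0$ this is injective with torsion-free cokernel, so it induces an inclusion $S^{[i]}f^{[*]}\Omega^{[q]}_X \hookrightarrow S^{[i]}\Omega^{[q]}_{\widetilde X}$ of reflexive sheaves after restricting to the smooth locus of $\widetilde X_0$ and extending. Pulling back an ample $H$ on $X$ and using that a section of $S^{[i]}f^{[*]}\Omega^{[q]}_X \otimes \sO(jf^*H)$ that is nonzero by Lemma \ref{lemmacover} yields, via this inclusion, a nonzero section of $S^{[i]}\Omega^{[q]}_{\widetilde X}\otimes \sO(jf^*H)$; since $f^*H$ is ample (as $f$ is finite), the definition of pseudoeffectivity (Definition \ref{def:reflexive}) is verified. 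This gives: $\Omega^{[q]}_X$ pseudoeffective $\Rightarrow$ $\Omega^{[q]}_{\widetilde X}$ pseudoeffective.

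For the converse under the quasi-\'etale hypothesis, the key point is that when $f$ is \'etale over the smooth locus $X_0$ of $X$, the natural map $f^*\Omega^1_{X_0} \to \Omega^1_{\widetilde X_0}$ is an \emph{isomorphism} (étale morphisms induce isomorphisms on differentials). Taking $q$-th wedge powers, biduals, and extending across the codimension-$\geq 2$ complements, we get an isomorphism $f^{[*]}\Omega^{[q]}_X \simeq \Omega^{[q]}_{\widetilde X}$ of reflexive sheaves on $\widetilde X$. Now apply the first bullet of Lemma \ref{lemmacover}: $f^{[*]}\Omega^{[q]}_X$ is pseudoeffective if and only if $\Omega^{[q]}_X$ is. Combining, $\Omega^{[q]}_{\widetilde X}$ pseudoeffective $\iff \Omega^{[q]}_X$ pseudoeffective, which is the asserted equivalence. (In fact the forward direction also follows from Lemma \ref{lemmacover} directly once the isomorphism is in place, so the quasi-\'etale case is entirely a consequence of Lemma \ref{lemmacover} plus the sheaf identification.)

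The main obstacle is the careful bookkeeping around the identification $f^{[*]}\Omega^{[q]}_X \simeq \Omega^{[q]}_{\widetilde X}$ (resp.\ the inclusion in the general case): one must check that reflexive pullback commutes with the reflexive $q$-th exterior power, and that $\Omega^{[q]}_{\widetilde X} = \mu_*\Omega^q_{\hat{\widetilde X}}$ for a desingularization $\mu$ of $\widetilde X$ (which holds because $\widetilde X$ is klt by \cite[Thm.1.4]{GKKP11}, using that a quasi-\'etale cover of a klt variety is klt). Over the open locus $\widetilde X_0$ everything is clean since the sheaves are locally free there and $f$ is \'etale; the only real work is verifying that the extensions across the bad loci agree, which is automatic because two reflexive sheaves agreeing outside a codimension-$\geq 2$ set are isomorphic. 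Once this is set up, both directions are immediate from Lemma \ref{lemmacover} and Definition \ref{def:reflexive}.
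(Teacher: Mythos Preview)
Your proposal is correct and follows essentially the same route as the paper: for the forward direction you use the natural injection $f^{[*]}\Omega^{[q]}_X \hookrightarrow \Omega^{[q]}_{\widetilde X}$ (the paper writes this at the level of $S^{[i]}$ directly, you route through Lemma~\ref{lemmacover} first, but the content is the same), and for the quasi-\'etale converse you identify $f^{[*]}\Omega^{[q]}_X \simeq \Omega^{[q]}_{\widetilde X}$ and invoke Lemma~\ref{lemmacover}, exactly as in the paper. The digression on klt and $\mu_*\Omega^q_{\hat{\widetilde X}}$ is unnecessary (the statement does not assume $X$ klt, and the reflexive extension argument you give at the end is all that is needed), but it does no harm.
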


\begin{proof}  
Over the smooth locus of $X$ we have a injective morphism
$$ 
f^*(S^i \Omega^q_{X_{\nons}}) \to S^{[i]} \Omega^{[q]}_{\widetilde X}.
$$
Since the complement of \fibre{f}{X_{\nons}} has codimension at least two, the morphism  extends to 
$$ 
f^{[*]}(S^{[i]} \Omega^{[q]}_X) \to S^{[i]}(\Omega^{[q]}_{\widetilde X}).
$$
which gives the first claim.

Assume now that $f$ is quasi-\'etale and that $\Omega^{[q]}_{\widetilde X}$ is pseudoeffective. Then $f^{[*]}(\Omega^{[q]}_X) \simeq \Omega^{[q]}_{\widetilde X}$ is pseudoeffective. 
Now we conclude by Lemma \ref{lemmacover}.
\end{proof} 

At this point we introduce generalised Kodaira dimension:

\begin{definition}  \label{def:Kodaira-2} 
Let $X$ be a normal projective variety with klt singularities and $1 \leq q \leq n$.
Then we define
$$ 
\kappa_q(X) = \kappa (X, \Omega_X^{[q]}). 
$$
\end{definition}

In case $q = \dim X$, we have of course $\kappa_q(X) =\kappa(X)$.

 As a corollary to Lemma \ref{lemmacover}  we  obtain a generalisation of \cite[Prop. 2.2]{Ane18}:  

\begin{proposition} \label{propcover2} Let  $\widetilde X \to X$ be a quasi-\'etale morphism of  projective varieties with klt singularities. 
Then 
$$ \kappa_q(\widetilde X) = \kappa_q(X)$$
for all $1 \leq q \leq \dim X$. 
\end{proposition}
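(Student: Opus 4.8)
The plan is to reduce the statement to the already-established Lemma \ref{lemmacover} by identifying the reflexive sheaf $\Omega_X^{[q]}$ with the sheaf $\sF$ appearing there, and then invoking both bullet points of that lemma simultaneously. The key observation is that for a quasi-\'etale morphism $f\colon \widetilde X \to X$ of klt varieties one has a canonical isomorphism $f^{[*]}(\Omega_X^{[q]}) \simeq \Omega_{\widetilde X}^{[q]}$. Indeed, over the smooth locus $X_{\nons}$ the map $f$ is \'etale, so the pull-back of the cotangent map gives $f^*(\Omega^q_{X_{\nons}}) \simeq \Omega^q_{f^{-1}(X_{\nons})}$; since $f$ is quasi-\'etale, $\widetilde X$ is klt (hence normal), and the complement of $f^{-1}(X_{\nons})$ has codimension at least two, so this isomorphism of locally free sheaves on the big open sets extends uniquely to an isomorphism of the reflexive hulls $f^{[*]}(\Omega_X^{[q]}) \simeq \Omega_{\widetilde X}^{[q]}$. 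This is exactly the fact already used in the proof of Proposition \ref{prop:cover}.

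Once this identification is in hand, the proof is a one-line application of the second bullet of Lemma \ref{lemmacover}: taking $\sF := \Omega_X^{[q]}$, which is reflexive by the Notation after Definition \ref{definitiontildeq} (or directly by $\Omega_X^{[q]} = (\bigwedge^q \Omega_X^1)^{**}$), one gets
$$
\kappa_q(\widetilde X) = \kappa(\widetilde X, \Omega_{\widetilde X}^{[q]}) = \kappa(\widetilde X, f^{[*]}(\Omega_X^{[q]})) = \kappa(X, \Omega_X^{[q]}) = \kappa_q(X),
$$
where the middle equality uses the isomorphism above and the third is Lemma \ref{lemmacover}. The definition $\kappa_q(X) = \kappa(X, \Omega_X^{[q]})$ is precisely Definition \ref{def:Kodaira-2}, so nothing further is needed.

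There is essentially no obstacle here; the statement is a corollary in the literal sense, as the excerpt already flags ("As a corollary to Lemma \ref{lemmacover}"). The only point requiring a small amount of care is the verification that $\widetilde X$ has klt singularities, so that $\Omega_{\widetilde X}^{[q]}$ is the sheaf to which Definition \ref{def:Kodaira-1} and Lemma \ref{lemmacover} apply: this follows because klt-ness is preserved under quasi-\'etale covers (the pull-back of a klt pair under a finite morphism \'etale in codimension one is again klt, since there is no ramification contribution), which is standard and can be cited. One should also note that the hypotheses of Lemma \ref{lemmacover} only require $f$ to be finite, so even the normality and finiteness assumptions are already guaranteed; the quasi-\'etale hypothesis is used solely to get the clean identification $f^{[*]}(\Omega_X^{[q]}) \simeq \Omega_{\widetilde X}^{[q]}$ rather than a mere inclusion.
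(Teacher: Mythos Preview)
Your proposal is correct and follows exactly the same approach as the paper: the paper's proof is simply the two-line observation that $f^{[*]}(\Omega_X^{[q]}) \simeq \Omega_{\widetilde X}^{[q]}$ for a quasi-\'etale morphism, followed by an appeal to Lemma \ref{lemmacover}. Your version merely spells out the justification for this isomorphism and the unwinding of Definition \ref{def:Kodaira-2} more explicitly, which is fine.
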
 

\begin{proof} 
This follows from Lemma \ref{lemmacover}, since for a quasi-\'etale morphism
$$
 f^{[*]}(\Omega_X^{[q]}) =  \Omega_{\widetilde X}^{[q]}.
$$
\end{proof} 

\begin{remark} Let $\mu: \widehat X \to X$ be a birational morphism of normal projective varieties with klt singularities. 
Then $\kappa_q(\widehat X) \leq \kappa_q(X) $ with equality if $X$ is smooth. 
The same inequality holds if $\mu: \widehat X \dasharrow X$ is a composition of divisorial contractions and 
flips.
\end{remark}

Although Conjecture \ref{conj:nv} can be formulated for any $p$, we are mainly interested in the case $p = 1$. We next confirm the conjecture for $p = 1$ in case $K_X \equiv 0$.

\begin{proposition} \label{prop:pb1} 
Let $X$ be a normal projective variety with klt singularities such that $K_X \equiv 0$. Assume that $X$ is smooth in codimension two, e.g., $X$ has terminal singularities. Then the following are equivalent:
\begin{enumerate}
\item $\Omega^{[1]}_X$ is  pseudoeffective ;
\item we have $\widetilde q(X) > 0$, i.e., there exists a quasi-\'etale cover $\widetilde X \to X$ such that $H^0(\widetilde X, \Omega^{[1]}_{\widetilde X}) \ne 0$ ;
\item we have $H^0(X,S^{[m]}\Omega^1_X)) \ne 0$ for some positive integer $m$, i.e., $\kappa_1(X) \geq 0$. 
\end{enumerate}

\end{proposition}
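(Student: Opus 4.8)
The plan is to establish the cycle of implications $(a) \Rightarrow (b) \Rightarrow (c) \Rightarrow (a)$; the proof is then essentially an assembly of facts already recalled in the introduction. Two of the three implications are purely formal and use neither $K_X \equiv 0$ nor the hypothesis that $X$ is \emph{smooth in codimension two}: the entire content of the statement sits in $(a) \Rightarrow (b)$, which I would simply invoke from \cite[Thm.1.6]{HP19} as a black box.

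For $(c) \Rightarrow (a)$ I would argue as follows. If $\kappa_1(X) \geq 0$, then there is some $m \geq 1$ with $H^0(X, S^{[m]}\Omega^1_X) \neq 0$. Given an ample Cartier divisor $H$ on $X$ and any $c > 0$, take $i = m$ and $j = 0$: then $i > cj$ holds trivially, while $H^0(X, S^{[i]}\Omega^{[1]}_X \otimes \sO_X(jH)) = H^0(X, S^{[m]}\Omega^1_X) \neq 0$, so $\Omega^{[1]}_X$ is pseudoeffective by Definition \ref{def:reflexive}. (If an exponent tending to infinity is wanted instead, replace a nonzero section $s \in H^0(X, S^{[m]}\Omega^1_X)$ by its powers $s^k \in H^0(X, S^{[mk]}\Omega^1_X)$, which are again nonzero because the product of two nonzero reflexive symmetric differentials is nonzero — this can be checked over the smooth locus of $X$, whose complement has codimension $\geq 2$.) For $(b) \Rightarrow (c)$: a quasi-\'etale cover $h: \widetilde X \to X$ with $q(\widetilde X) > 0$ has $H^0(\widetilde X, \Omega^{[1]}_{\widetilde X}) \neq 0$, hence $\kappa_1(\widetilde X) = \kappa(\widetilde X, \Omega^{[1]}_{\widetilde X}) \geq 0$; since $h$ is quasi-\'etale, Proposition \ref{propcover2} gives $\kappa_1(X) = \kappa_1(\widetilde X) \geq 0$, which is exactly $(c)$.

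The remaining implication $(a) \Rightarrow (b)$ is the main obstacle, and I would not reprove it: under the standing hypotheses it is precisely \cite[Thm.1.6]{HP19}. For orientation, that result is obtained by passing to the Beauville--Bogomolov type decomposition available for klt varieties with numerically trivial canonical class that are smooth in codimension two, and then showing, by a curvature/stability argument, that a pseudoeffective reflexive cotangent sheaf cannot be supported on the irreducible symplectic or Calabi--Yau factors; consequently a positive-dimensional abelian factor must occur, and a suitable quasi-\'etale cover then has strictly positive irregularity. Granting this input, the three implications above close the cycle and prove the proposition; as a consistency check, note that $(b)$ also implies $(a)$ directly, via the quasi-\'etale part of Proposition \ref{prop:cover}.
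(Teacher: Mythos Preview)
Your proof is correct and follows exactly the same route as the paper: $(a)\Rightarrow(b)$ is quoted from \cite[Thm.1.6]{HP19}, $(b)\Rightarrow(c)$ comes from Proposition \ref{propcover2}, and $(c)\Rightarrow(a)$ is immediate from Definition \ref{def:reflexive}. You simply unpack the steps the paper leaves as ``obvious'' and add the (correct) contextual remark on how \cite{HP19} proceeds.
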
 

\begin{proof} 
By \cite[Thm.1.6]{HP19} we know that 1) implies 2). 
By Proposition \ref{propcover2} we know that 2) implies 3) which obviously implies 1).
\end{proof}

We will now discuss the relation between the 
pseudoeffectivity of $\Omega^q_X$ and the MRC fibration. 
First, the rational connectedness criterion given in \cite{CDP12} can be stated as follows.

\begin{theorem} \label{thm:rc} Let $X$ be a projective manifold of dimension $n$. Then $X$ is rationally connected if and only if  
for all $1 \leq q \leq n$ the vector bundle $\Omega^q_X$ is not pseudoeffective. 
\end{theorem}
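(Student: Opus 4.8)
\textbf{Proof plan for Theorem \ref{thm:rc}.}

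The plan is to deduce both implications from the results of \cite{CDP12}, which characterise rational connectedness via the positivity of exterior powers of the cotangent bundle. First I would recall that if $X$ is rationally connected, then $H^0(X, (\Omega^1_X)^{\otimes m}) = 0$ for all $m \geq 1$ (and more generally $H^0(X, S^m \Omega^q_X \otimes \sO_X(A)) = 0$ for $A$ a fixed ample divisor and $m$ large, since a tensor power of the cotangent bundle admitting a section after twisting by a fixed ample divisor would force a nontrivial section on a free rational curve, contradicting that the restriction of $\Omega^1_X$ to such a curve is a sum of $\sO(a_i)$ with $a_i \leq 0$ and at least one $a_i < 0$). Applying the characterisation of pseudoeffectivity via the tautological class on $\PP(\Omega^q_X)$ and the criterion recalled after Definition \ref{definitionpseffvb}, this shows that no $\Omega^q_X$ can be pseudoeffective; this is the easy direction.

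For the converse, suppose $X$ is not rationally connected. Consider the MRC fibration $\psi: X \dashrightarrow Z$; after resolving we may assume it is a morphism with $\dim Z = q_0 \geq 1$, and $Z$ is not uniruled by \cite{GHS03}. Since $Z$ is not uniruled, $K_Z$ is pseudoeffective by \cite{BDPP13}, hence $\Omega^{q_0}_Z = \omega_Z$ is a pseudoeffective line bundle on $Z$ (or on a smooth model of it). Pulling back via $\psi$ and using the cotangent map $\psi^* \Omega^{q_0}_Z \to \Omega^{q_0}_X$, which is injective because $\psi$ is a fibration, one gets a pseudoeffective subsheaf of $\Omega^{q_0}_X$; by Example \ref{examplesubsheaf}, or rather the analogous statement for the locally free setting via Lemma \ref{lemma:func} applied to a resolution, this forces $\Omega^{q_0}_X$ to be pseudoeffective. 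One must be a little careful to transfer pseudoeffectivity across the birational model used to make $\psi$ a morphism, but this is exactly what Proposition \ref{prop:bir}, b) provides since $X$ is smooth. Thus some $\Omega^q_X$, $q = q_0$, is pseudoeffective, which is the contrapositive of the remaining implication.

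The main obstacle is the forward direction's input from \cite{CDP12}: one genuinely needs their theorem that rational connectedness is \emph{equivalent} to the non-pseudoeffectivity of all $\Omega^q_X$, and in particular the hard half — that non-pseudoeffectivity of every $\Omega^q_X$ forces rational connectedness — is not something one reproves here; it rests on their orbifold/foliation-theoretic argument combined with \cite{BDPP13}. Given that theorem, the reformulation above (MRC fibration, pull-back of $\omega_Z$, passage to a smooth birational model) is routine. I would therefore present Theorem \ref{thm:rc} essentially as a restatement of \cite[Thm.]{CDP12} in the language of pseudoeffective sheaves set up in Section 3, checking only that the definitions of pseudoeffectivity used there and here agree for locally free sheaves — which is noted in Remark following Definition \ref{definitionpseffvb} — and that the MRC base contributes the claimed pseudoeffective quotient of $\Omega^{q_0}_X$.
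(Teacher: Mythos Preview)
Your forward direction is essentially the paper's: use covering rational curves to kill sections of $S^i\Omega^q_X\otimes\sO_X(jH)$ for $i\gg j$. One small imprecision: you invoke a \emph{free} curve with ``at least one $a_i<0$''. For $q\geq 2$ this is not enough to make $\Omega^q_X|_C$ anti-ample; you need a \emph{very free} curve so that all $a_i<0$, exactly as the paper states.

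For the converse you take a genuinely different route from the paper. The paper argues: if every $\Omega^q_X$ is non-pseudoeffective, then every invertible subsheaf $\sF\subset\Omega^q_X$ is non-pseudoeffective (Example \ref{examplesubsheaf}), and then applies \cite[Thm.1.1]{CDP12} directly. You instead run the contrapositive via the MRC fibration: if $X$ is not rationally connected, the base $Z$ has $\dim Z=q_0\geq 1$, is not uniruled by \cite{GHS03}, so $K_Z$ is pseudoeffective by \cite{BDPP13}, and pulling back $\omega_Z$ yields a pseudoeffective line subbundle of $\Omega^{q_0}_{X'}$ on a blow-up $X'\to X$; then Proposition \ref{prop:bir} transfers pseudoeffectivity back to $\Omega^{q_0}_X$. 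This is correct, and is in fact exactly the argument the paper gives for the more general Theorem \ref{thm:MRC}; your approach thus bypasses \cite{CDP12} entirely in favour of \cite{BDPP13}+\cite{GHS03}.

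This makes your final paragraph misleading. You write that ``the hard half \ldots rests on their orbifold/foliation-theoretic argument'' from \cite{CDP12}, but the MRC argument you just outlined does not use \cite{CDP12} at all; the deep input is \cite{BDPP13}. So either drop the MRC argument and cite \cite{CDP12} as a black box (the paper's choice for Theorem \ref{thm:rc}), or keep the MRC argument and drop the claim that \cite{CDP12} is needed.
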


\begin{proof} If $X$ is rationally connected, it is dominated by very free rational curves \cite[IV.3.9]{Ko96}. 
It is then straightforward to verify the vanishing condition in Definition \ref{def:reflexive}.

Assume that $\Omega^q_X$ is not pseudoeffective for all $1 \leq q \leq n$.
Let $\sF \subset \Omega^q_X$ be an invertible subsheaf,
then $\sF$ is not pseudoeffective (see Example \ref{examplesubsheaf}). Hence by \cite[Thm.1.1]{CDP12}, $X$ is rationally connected. 
\end{proof} 

Theorem \ref{thm:rc} can be generalised as follows. 

\begin{theorem} \label{thm:MRC} Let $X$ be a projective variety of dimension $n$. 
Fix some $r \in \{1, \ldots, n\}$ and assume that $\Omega^{[q]}_X$ is not pseudoeffective
for all $r \leq q \leq n$. 

Then $X$ is uniruled, and the base $Z$ of the MRC fibration satifies $\dim Z \leq r-1$. 
\end{theorem}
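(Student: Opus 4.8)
The plan is to combine Theorem \ref{thm:rc} with a study of the MRC fibration $\phi: X \dashrightarrow Z$. First I would reduce to the smooth case: replacing $X$ by a resolution $\widehat X \to X$ is harmless, because by Proposition \ref{prop:bir}, a) the pseudoeffectivity of $\Omega^{[q]}_{\widehat X}$ would force that of $\Omega^{[q]}_X$; so the hypothesis that $\Omega^{[q]}_X$ is not pseudoeffective for $r \le q \le n$ passes to $\widehat X$, and the MRC fibration (hence $Z$) is a birational invariant. So assume $X$ smooth. The first step, uniruledness, is immediate: if $X$ were not uniruled, then $K_X = \Omega^n_X$ would be pseudoeffective by \cite{BDPP13}, contradicting the hypothesis for $q = n$.

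Next I would pass to a smooth model of the MRC fibration, i.e.\ a birational modification $X' \to X$ with a morphism $\psi: X' \to Z$ (with $Z$ smooth) whose general fiber $G$ is rationally connected of dimension $n - \dim Z$, and whose base $Z$ is not uniruled (a key property of the MRC fibration, by Graber--Harris--Starr). Again by Proposition \ref{prop:bir} we may work on $X'$. Set $d = \dim Z$; the claim $\dim Z \le r - 1$ is equivalent to $d \le r-1$, i.e.\ to showing $\Omega^{[q]}_X$ (equivalently $\Omega^q_{X'}$) \emph{is} pseudoeffective for $q = d$, which then contradicts the hypothesis unless $d < r$. So the heart of the proof is:

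\emph{If $\psi: X' \to Z$ has rationally connected general fiber and $\dim Z = d$, then $\Omega^d_{X'}$ is pseudoeffective.} The natural candidate is the saturation in $\Omega^d_{X'}$ of the line bundle $\psi^* \omega_Z = \psi^*\Omega^d_Z$, coming from the surjection $\psi^*\Omega^d_Z \hookrightarrow \Omega^d_{X'}$ at points where $\psi$ is smooth. This saturated subsheaf has the form $\psi^*\omega_Z \otimes \sO_{X'}(D)$ with $D$ effective and $\psi$-exceptional (supported over the singular locus of $\psi$). Since $Z$ is not uniruled, $\omega_Z$ is pseudoeffective; pulling back preserves pseudoeffectivity of line bundles (this is the rank-one case already used in Lemma \ref{lemma:func}), so $\psi^*\omega_Z$ is pseudoeffective, hence so is $\psi^*\omega_Z \otimes \sO_{X'}(D)$ after adding the effective $D$. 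By Example \ref{examplesubsheaf} (a pseudoeffective invertible subsheaf forces pseudoeffectivity of the ambient sheaf) we conclude $\Omega^d_{X'}$ is pseudoeffective.

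The main obstacle is making the statement "the saturation of $\psi^*\Omega^d_Z$ in $\Omega^d_{X'}$ is $\psi^*\omega_Z$ twisted by an effective exceptional divisor" precise and correct: one must check that the cotangent map $\psi^*\Omega^d_Z \to \Omega^d_{X'}$ is generically injective along $Z$ with torsion-free, saturated image, which relies on the rational connectedness of the general fiber $G$ only to the extent that $\Omega^d_Z$ is the right thing to pull back — but it is really just the generic smoothness of $\psi$ that is used here, together with the fact that $\det(\psi^*\Omega^d_Z)$ has no extra positivity to lose. One should also be a little careful that $D$ can be taken effective; this follows because the saturation of a subbundle can only enlarge the sheaf along the degeneracy locus of $\psi$, which is contained in finitely many fibers. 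Once this local/birational bookkeeping is done, the combination with Theorem \ref{thm:rc} and \cite{BDPP13} closes the argument: $\dim Z \ge r$ would give a pseudoeffective $\Omega^{[q]}_X$ for $q = \dim Z \ge r$, contradiction, hence $\dim Z \le r - 1$.
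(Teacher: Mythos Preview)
Your proof is correct and follows essentially the same route as the paper: reduce to the smooth case, use \cite{BDPP13} for uniruledness, then show that if $d=\dim Z\ge r$ the injection $\psi^*\omega_Z\hookrightarrow\Omega^d_{X'}$ together with the pseudoeffectivity of $K_Z$ (from \cite{GHS03}) forces $\Omega^d_{X'}$ pseudoeffective, a contradiction. The ``main obstacle'' you flag is not one: the saturation step is unnecessary, since the unsaturated subsheaf $\psi^*\omega_Z\subset\Omega^d_{X'}$ is already a pseudoeffective line bundle by Lemma~\ref{lemma:func}, and Example~\ref{examplesubsheaf} applies directly --- this is exactly what the paper does (phrased via sections of $S^i\Omega^d_X\otimes\sO_X(jf^*\sH_Z)$), so your worries about the effectivity or support of $D$ can be dropped.
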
 

\begin{proof} 
By Proposition \ref{prop:bir} we can assume without loss of generality that $X$ is smooth.
Since $K_X = \Omega^n_X$ is not pseudoeffective, the manifold $X$ is uniruled by \cite{BDPP13}. 
Hence we consider the MRC fibration 
$$
f: X \dasharrow Z.
$$ 
Up to replacing $Z$ by a resolution and $X$ by a blow-up, we may assume, by Proposition \ref{prop:bir},
that $Z$ is smooth and $f$ is a morphism.

Arguing by contradiction we suppose that $d := \dim Z \geq r$. By \cite{GHS03}, 
the variety $Z$ is not uniruled, hence $K_Z$ is pseudoeffective. Choose $\sH_Z$ ample on $Z$ and set $\sH_X = f^*(\sH_Z)$. 
Since $K_Z$ is pseudoeffective, for all  $c > 0$ there exist integers $i$ and $j$ with  $i>cj$ such that 
$$ 
H^0(Z,\sO_Z(iK_Z) \otimes \sO_Z(j\sH_Z)) \ne 0.
$$ 
Since $0 \neq f^*(\Omega^d_Z) \subset \Omega^d_X$, 
it follows that 
$$ 
H^0(X,S^i\Omega^d_X \otimes \sO_X (j\sH_X)) \ne 0.
$$
Thus $\Omega^d_X$ is pseudoeffective and $d \geq r$, a contradiction to our assumption. 
\end{proof} 

If $X$ is smooth, the converse to Theorem \ref{thm:MRC} is also true:

\begin{proposition} \label{propositionuniruled}
Let $X$ be a uniruled projective manifold of dimension $n$,  and let $f: X \dasharrow Z$ be the MRC fibration. 
If $d =\dim Z$, then for every  $\Omega^q_X$ is not pseudoeffective for all $d+1 \leq q \leq n$.
\end{proposition}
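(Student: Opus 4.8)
The plan is to show that, for any fixed $q$ with $d+1\le q\le n$, the vanishing condition characterising pseudoeffectivity (Definition \ref{def:reflexive}) fails for $\Omega^q_X$, by testing sections against a suitable very free rational curve contained in a fibre of the MRC fibration. First, by Proposition \ref{prop:bir} I would replace $X$ by a smooth birational model (and $Z$ by a resolution) so that $f\colon X\to Z$ is a morphism of smooth projective varieties; since $X$ is uniruled, $e:=n-d\ge 1$, a general fibre $F=f^{-1}(z)$ is smooth and rationally connected of dimension $e$, $f$ is smooth along $F$, and hence $N_{F/X}\simeq\sO_F^{\oplus d}$.

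The heart of the argument is a restriction computation on curves. Let $C\subset F$ be a very free rational curve, so that $T_F|_C\simeq\bigoplus_{l=1}^e\sO_C(b_l)$ with all $b_l\ge 1$. Restricting the normal bundle sequence $0\to T_F\to T_X|_F\to N_{F/X}\to 0$ to $C$ yields $0\to T_F|_C\to T_X|_C\to\sO_C^{\oplus d}\to 0$, which splits because $\Ext^1(\sO_C^{\oplus d},T_F|_C)=H^1(C,T_F|_C)^{\oplus d}=0$ (each $b_l\ge 1>-1$). Therefore $\Omega^1_X|_C\simeq\sO_C^{\oplus d}\oplus\bigoplus_{l=1}^e\sO_C(-b_l)$, and every direct summand of $\Omega^q_X|_C=\bigwedge^q(\Omega^1_X|_C)$ has the shape $\sO_C\big(-\sum_{l\in S}b_l\big)$ for some $S\subseteq\{1,\dots,e\}$; crucially, since the trivial part has rank $d<q$, one has $|S|\ge q-d\ge 1$ in every summand, so every summand has degree $\le-1$. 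Consequently $S^{i}\Omega^q_X|_C$ is a direct sum of line bundles of degree $\le-i$, whence $H^0\big(C,S^{i}\Omega^q_X|_C\otimes\sO_C(j\,H\cdot C)\big)=0$ whenever $i>(H\cdot C)\,j$.

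To conclude, suppose for contradiction that $\Omega^q_X$ is pseudoeffective, and fix an ample $H$. For each $\nu\in\N$ there is then a nonzero $s_\nu\in H^0(X,S^{i_\nu}\Omega^q_X\otimes\sO_X(j_\nu H))$ with $i_\nu>\nu\,j_\nu$. Choosing a point $x\in X$ outside a suitable proper closed subset (so that $x$ lies in a general fibre $F$ and is general enough in $F$) and outside the zero locus of every $s_\nu$ — a countable union of proper closed subsets — one finds a very free rational curve $C\subset F$ through $x$; set $\delta:=H\cdot C$. Then no $s_\nu$ vanishes identically on $C$, while the computation above gives $s_\nu|_C=0$ as soon as $i_\nu>\delta j_\nu$; taking $\nu\ge\delta$ (the case $j_\nu=0$, $i_\nu\ge1$ being immediate) produces the contradiction. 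Hence $\Omega^q_X$ is not pseudoeffective, and by Proposition \ref{prop:bir} the same holds for the original $X$. For $d=0$ this recovers Theorem \ref{thm:rc}.

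The only steps needing genuine care — but no new ideas — are the reduction to the case where $f$ is a morphism and the existence of a very free rational curve through a general point of $X$ lying inside its MRC fibre; both are standard facts about rationally connected varieties and the MRC fibration. Everything else is the elementary bundle computation above, where the hypothesis $q\ge d+1$ is exactly what forces at least one fibre cotangent direction into every wedge summand, and hence strict negativity of $\Omega^q_X|_C$.
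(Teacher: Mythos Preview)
Your argument is correct and is essentially the paper's proof written out in full detail: both restrict $\Omega_X$ to a very free rational curve $C$ inside a general MRC fibre, obtain the splitting $T_X|_C\simeq\sO_C^{\oplus d}\oplus T_F|_C$, and use that for $q\ge d+1$ every wedge summand of $\Omega^q_X|_C$ picks up at least one strictly negative factor. The paper compresses the conclusion into the single line ``$\bigwedge^q T_X|_C$ is ample, hence $\Omega^q_X$ is not pseudoeffective'', whereas you spell out the countable--avoidance argument that produces a single curve on which every putative section $s_\nu$ must vanish; this is a welcome clarification, not a different method.
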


\begin{proof} 
Let $F$ be a general fiber of $f$. Then $F$ is rationally connected of dimension 
$\dim F = n-d$. Let $C \subset F$ be a general very free rational curve \cite[IV.3.9]{Ko96}, so ${T_F}_{\vert C}$ is ample.
Then $$
{T_X}|_{C} \simeq \sO_C^{\oplus d} \oplus {T_F}|_{C}.
$$
Thus for every $q \geq d+1$, the exterior power $\bigwedge^q {T_X}|_{C}$ is ample. Hence $\Omega^q_X$ is not pseudoeffective. 
\end{proof} 

For smooth varieties, the MRC-fibration  should allow to reduce Conjecture \ref{conj:nv} to non-uniruled varieties:

\begin{conjecture} \label{conj:uniruled} 
Let $X$ be a uniruled projective manifold, and let $f: X \dasharrow Z$ be the MRC fibration to the projective manifold
$Z$. Let $1 \leq q \leq n$.  Then $\Omega^q_X$ is pseudoeffective if and only if $\Omega^q_Z$ is pseudoeffective. 
\end{conjecture}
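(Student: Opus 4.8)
The plan is to reduce to the case of a morphism, dispose of the trivial ranges of $q$, and then split the biconditional into an easy inclusion-of-sheaves direction and a hard ``positivity descends along the fibration'' direction.

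\emph{Reductions.} First I would use Proposition \ref{prop:bir} (both parts, which is legitimate since the ambient $X$ is smooth) to replace $X$ by a smooth birational model on which $f$ becomes a morphism, keeping the notation $f\colon X \to Z$; this affects the pseudoeffectivity of neither $\Omega^q_X$ nor $\Omega^q_Z$. Set $d:=\dim Z$. If $q>d$, then $\Omega^q_Z=0$ is not pseudoeffective while $\Omega^q_X$ is not pseudoeffective by Proposition \ref{propositionuniruled}, so both sides fail. If $q=d$, then $\Omega^d_Z=\omega_Z$ is pseudoeffective because $Z$ is not uniruled by \cite{GHS03} (hence $K_Z$ is pseudoeffective by \cite{BDPP13}), and $\Omega^d_X$ is pseudoeffective because the exterior power of the cotangent map exhibits $f^*\omega_Z$ as a subsheaf of $\Omega^d_X$ with $f^*\omega_Z$ pseudoeffective; so both sides hold. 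Thus we may assume $1\leq q\leq d-1$, the range in which the statement has content. Throughout, the basic structural input is the relative cotangent sequence $0 \to f^*\Omega_Z \to \Omega_X \to \Omega_{X/Z} \to 0$, exact over the locus where $f$ is smooth, which induces on $\Omega^q_X$ a filtration with graded pieces $f^*\Omega^p_Z\otimes\Omega^{q-p}_{X/Z}$, $0\leq p\leq q$, the bottom step being the subsheaf $f^*\Omega^q_Z$; taking symmetric powers refines this to a filtration of $S^i\Omega^q_X$ whose graded pieces are, up to Schur functors, tensor products of a term pulled back from $Z$ and a term built from $\Omega^{\bullet}_{X/Z}$, the latter of ``fibre degree'' $iq-w$ in weight $w$ (weight $iq$ corresponding to the pure piece $f^*S^i\Omega^q_Z$).

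\emph{Easy direction.} Assume $\Omega^q_Z$ is pseudoeffective. Then $f^*\Omega^q_Z$ is pseudoeffective by Lemma \ref{lemma:func}, and since $f^*\Omega^q_Z\hookrightarrow\Omega^q_X$ is an inclusion of locally free sheaves (the kernel is torsion in a torsion-free sheaf, hence zero), we get $f^*S^i\Omega^q_Z=S^i f^*\Omega^q_Z\hookrightarrow S^i\Omega^q_X$ for all $i$; twisting by an ample divisor, as in Example \ref{examplesubsheaf}, shows $\Omega^q_X$ is pseudoeffective, and then so is $\Omega^q_X$ on the original (non-modified) $X$ by Proposition \ref{prop:bir}.

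\emph{Hard direction.} Assume $\Omega^q_X$ is pseudoeffective and fix an ample divisor $A$ on $X$. For every $c>0$ there is a nonzero section $\sigma\in H^0(X, S^i\Omega^q_X\otimes\sO_X(jA))$ with $i>cj$. The idea is that, since the general fibre $F$ of $f$ is rationally connected, $\Omega^{q'}_F$ is not pseudoeffective for every $q'\geq 1$ (Theorem \ref{thm:rc}); hence, restricting to $F$ and pushing forward to $Z$, every graded piece of $S^i\Omega^q_X\otimes\sO_X(jA)$ of fibre degree exceeding a fixed constant multiple of $j$ has vanishing $f$-pushforward, so that once $i/j$ is large enough $\sigma$ is forced into the near-top-weight part of the filtration. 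Pushing that part forward to $Z$ yields a nonzero section of $S^a\Omega^q_Z\otimes\sV$ on $Z$, where $a\geq i-Cj$ (so $a\gg j$) and $\sV$ is $f_*$ of a twisted relative bundle; provided one can show that $\sV$ embeds into $\sO_Z(C'jB)^{\oplus M}$ for some ample $B$ on $Z$ with $C'$ independent of $i,j$, this gives a nonzero section of $S^a\Omega^q_Z\otimes\sO_Z(C'jB)$ with $a\gg j$, i.e. $\Omega^q_Z$ is pseudoeffective.

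\emph{Main obstacle.} The hard part is precisely that last step: controlling $\sV=f_*\big(\text{relative }q\text{-forms}\otimes\sO_X(jA)\big)$ on $Z$, i.e. a uniform effective boundedness/global-generation statement for pushforwards along a rationally connected fibration, to the effect that such bundles cannot be ``more positive than $O(j)$'' in directions coming from $Z$. Even the fibrewise input needs care, because $f_*$ is only left exact, so the filtration produces only subquotients of $f_*S^i\Omega^q_X\otimes\sO_X(jA)$ that embed into the expected graded pieces; and for higher-dimensional fibres the mixed graded terms (involving both $\Omega^{\bullet}_Z$ and $\Omega^{\bullet}_{X/Z}$) together with the non-splitting of the filtration add further technical burden. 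I expect that a satisfactory treatment would require a genuinely new ``relative version'' of the rational-connectedness criterion of \cite{CDP12}, which is presumably why the statement is stated only as a conjecture.
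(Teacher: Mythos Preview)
The statement is a \emph{conjecture}, and the paper does not prove it in general; it proves only the easy direction and then establishes the case $\dim X=3$ (Proposition \ref{pro:uniruled}). You have correctly identified this, and your treatment of the reductions, the trivial ranges $q>d$ and $q=d$, and the easy direction agrees with the paper's discussion following the conjecture.

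For the hard direction your strategy diverges from the paper's. You attack the MRC fibration directly via the filtration on $S^i\Omega^q_X$ coming from the relative cotangent sequence, trying to force sections into the top-weight piece and then push forward. The paper instead first runs the MMP: by \cite[Cor.1.3.2]{BCHM10} the map $f$ factors as a sequence of divisorial contractions and flips terminating in a Mori fibre space $f'\colon X'\to Z$ of relative Picard number one, and Corollary \ref{cor:min} transports pseudoeffectivity of $\Omega^{[q]}$ along this run. Only then does the paper apply a filtration/pushforward argument of the kind you sketch, now on $f'$---which in dimension three is generically a conic bundle, so the relative cotangent has rank one and the filtration has just two steps, making your obstacle disappear. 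Your direct approach and the paper's MMP reduction are complementary: the MMP gives enough structure to carry out your argument in low dimensions, but introduces its own difficulty in general (the new $X'$ is only terminal, so $df$ can vanish along a divisor; see the remark after Proposition \ref{pro:uniruled}). The bound you isolate on $f_*\big(\text{relative forms}\otimes\sO_X(jA)\big)$ is indeed the crux, and neither approach currently overcomes it beyond dimension three.
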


Note that, by Proposition \ref{prop:bir}, we may assume $f$ holomorphic. 
Then one direction is clear:  if $\Omega^q_Z$ is pseudoeffective, then so does $f^*(\Omega^q_Z)$ by Lemma \ref{lemma:func}. Hence $\Omega^q_X$ is pseudoeffective, see Example \ref{examplesubsheaf}. 
Vice versa assume that $\Omega_X^q$ is pseudoeffective. 
Applying  \cite[Cor.1.3.2]{BCHM10}, $f$ factors into a sequence of divisorial contractions and flips, ending with a Mori fiber space 
 $f': X' \to Z$ of relative Picard number one. By  Corollary \ref{cor:min}, we may therefore assume that $f$ is a Mori contraction, but now $X$ may have terminal singularities instead of being smooth. 

\begin{proposition} \label{pro:uniruled} 
Conjecture \ref{conj:uniruled} is true in dimension three.
\end{proposition}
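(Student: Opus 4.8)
The plan is the following; throughout, only the implication ``$\Omega^q_X$ pseudoeffective $\Rightarrow$ $\Omega^{[q]}_Z$ pseudoeffective'' has to be proven, the converse being Lemma~\ref{lemma:func} together with Example~\ref{examplesubsheaf}. By Proposition~\ref{prop:bir} we may assume that $f:X\to Z$ is a morphism with $X$ and $Z$ smooth, and since $X$ is uniruled we have $\dim Z\le 2$. The cases $\dim Z\le 1$ are elementary: if $\dim Z=0$ then $X$ is rationally connected, so no $\Omega^q_X$ is pseudoeffective by Theorem~\ref{thm:rc} while $\Omega^q_Z=0$; if $\dim Z=1$ then $Z$, being the base of the MRC fibration, is a smooth non-uniruled curve, so $\Omega^1_Z=\omega_Z$ is pseudoeffective --- which settles $q=1$ --- while for $q\ge 2$ one has $\Omega^q_Z=0$ and $\Omega^q_X$ is not pseudoeffective by Proposition~\ref{propositionuniruled}. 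If $\dim Z=2$ then $Z$ is a non-uniruled surface, so $\Omega^2_Z=K_Z$ is pseudoeffective by \cite{BDPP13} and the inclusion $f^*K_Z=\bigwedge^2 f^*\Omega^1_Z\subseteq\Omega^2_X$ shows $\Omega^2_X$ is pseudoeffective as well, while for $q=3$ neither $\Omega^3_X=K_X$ (as $X$ is uniruled) nor $\Omega^3_Z=0$ is pseudoeffective. Hence the whole content of the statement is the case $q=1$, $\dim Z=2$, where, after running a relative minimal model program over $Z$ (\cite{BCHM10} and Corollary~\ref{cor:min}), we may in addition assume that $f$ is a Mori conic bundle and $X$ has terminal singularities.

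For this remaining case I would argue by contradiction: assume $\Omega^1_X$ is pseudoeffective but $\Omega^{[1]}_Z$ is not. Let $\mathcal{Q}:=\Omega^{[1]}_X/f^{[*]}\Omega^1_Z$ be the vertical quotient. One checks that $f^{[*]}\Omega^1_Z$ is already saturated in $\Omega^{[1]}_X$ (the cotangent map $f^{[*]}\Omega^1_Z\to\Omega^{[1]}_X$ is a subbundle outside the non-smooth locus of $f$, which has codimension two); consequently $\mathcal{Q}$ is torsion free of rank one, equal to $\omega_{X/Z}$ away from a codimension-two subscheme $W$ mapping onto the discriminant $\Delta\subseteq Z$, and on a general fibre $F\cong\PP^1$ it restricts to $\omega_F\cong\sO_F(-2)$. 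Now use the symmetric-power filtration of the extension $0\to f^{[*]}\Omega^1_Z\to\Omega^{[1]}_X\to\mathcal{Q}\to 0$: its graded pieces are, up to codimension-two subschemes, $f^{*}S^{a}\Omega^1_Z\otimes\omega_{X/Z}^{\otimes(i-a)}$ for $0\le a\le i$. A nonzero section of $S^{[i]}\Omega^{[1]}_X\otimes\sO_X(jH)$ does not vanish on the generic fibre, and a restriction-to-general-fibre argument shows that it produces a nonzero section of some graded piece with index $a\ge i-\tfrac{1}{2}(H\cdot F)\,j$. Pushing forward by $f$ and applying the projection formula yields a nonzero section of $S^{[a]}\Omega^1_Z\otimes\mathcal{M}$, where $\mathcal{M}:=f_*\bigl(\omega_{X/Z}^{\otimes(i-a)}\otimes\sO_X(jH)\bigr)$ (up to bounded torsion corrections coming from $W$). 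The point is that, because $\omega_{X/Z}$ is negative along the fibres with positivity in the base direction bounded by a fixed constant, and because $i-a\le O(j)$ controls both the exponent and the rank, the sheaf $\mathcal{M}$ can be embedded into a direct sum of copies of $\sO_Z(Cj\,A_Z)$ for a fixed ample $A_Z$ and a constant $C$ independent of $i,j$; hence $H^0(Z,S^{[a]}\Omega^1_Z\otimes\sO_Z(Cj\,A_Z))\ne 0$. Since $\Omega^{[1]}_Z$ is not pseudoeffective, such groups vanish once $a>c_0\cdot Cj$ for a fixed $c_0$, so $a=O(j)$; combined with $a\ge i-O(j)$ this bounds $i/j$, contradicting the pseudoeffectivity of $\Omega^1_X$ (Definition~\ref{def:reflexive}).

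The step I expect to be the main obstacle is making the last two sentences of the previous paragraph rigorous: the uniform control of $\mathcal{M}$ together with the codimension-two ``torsion corrections'' --- concretely, the behaviour of the symmetric powers of $\mathcal{Q}$ and of their direct images over the degenerate fibres of the conic bundle (and the finitely many singular points of $X$), where $\mathcal{Q}$ fails to be a line bundle and $S^{[i-a]}\mathcal{Q}$ acquires torsion supported over $\Delta$, so that a priori $f_*$ could contribute a term supported on $\Delta$ whose symmetric-differential cohomology is not governed by the non-pseudoeffectivity of $\Omega^1_Z$. One way to handle this is to pass to the discriminant double cover and an auxiliary resolution, reducing to a genuine $\PP^1$-bundle $\PP(\sE)\to Z'$, where $\mathcal{Q}=\omega_{\PP(\sE)/Z'}$ is an honest line bundle and $f_*\bigl(\omega_{\PP(\sE)/Z'}^{\otimes b}\otimes\sO(jH)\bigr)\simeq S^{e}\sE\otimes(\text{line bundle of size }O(j))$ with $e=O(j)$ by the projection formula, so that the estimate is immediate --- at the price of transporting the (non-)pseudoeffectivity of $\Omega^1$ along that ramified cover; alternatively one analyses the conic bundle directly, using that over a general point of $\Delta$ the degenerate fibre is a pair of lines $L_1\cup L_2$ with $K_X\cdot L_k=-1$, which bounds the torsion of $\mathcal{Q}$ along $W$ and lets one split off and discard the $\Delta$-supported part. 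Either way, this is the point where the three-dimensionality of the problem --- through the structure theory of Mori conic bundles over surfaces --- enters essentially.
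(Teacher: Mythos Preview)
Your overall strategy --- reduce to a Mori conic bundle $f\colon X\to Z$ with $\dim Z=2$ and $q=1$, use the symmetric-power filtration of the cotangent sequence, and bound the index $a$ via the degree on a general fibre $F\simeq\PP^1$ --- is exactly the paper's. The difference lies in how the factor $\omega_{X/Z}^{\otimes(i-a)}$ is controlled, and the paper's trick is much simpler than either of your proposed workarounds.

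Instead of pushing forward to $Z$ and estimating the direct image $\mathcal{M}=f_*\bigl(\omega_{X/Z}^{\otimes(i-a)}\otimes\sO_X(jH)\bigr)$, the paper stays on $X$: since $-K_{X/Z}$ is $f$-ample and relatively globally generated away from the finite set of singular points of $X$, one can pick an ample Cartier divisor $H$ on $Z$ such that $A:=-K_{X/Z}+f^*H$ is ample and \emph{effective}. This yields a single injection
\[
\omega_{X/Z}\hookrightarrow f^*\sO_Z(H),
\]
valid on all of $X$. Now throw away the fibres over the finitely many images of singular points (a codimension-two subset of $X$), so that on the remaining open $X_0$ the cotangent sequence is an honest short exact sequence of vector bundles with torsion-free rank-one quotient. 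The filtration gives a nonzero section in
\[
H^0\bigl(X_0,\,f^*S^k\Omega_Z\otimes\omega_{X/Z}^{[\,i-k\,]}\otimes\sO_X(jA)\bigr)
\]
for some $k$; the fibre-degree argument forces $i-k\le j$; and then the injection above replaces $\omega_{X/Z}^{[\,i-k\,]}$ by $f^*\sO_Z((i-k)H)\hookrightarrow f^*\sO_Z(jH)$. One obtains directly
\[
H^0\bigl(X_0,\,f^*S^k\Omega_Z\otimes\sO_X(j(A+f^*H))\bigr)\ne 0,
\]
which, since $X\setminus X_0$ has codimension two, shows $f^*\Omega_Z$ is pseudoeffective and hence $\Omega_Z$ is (Lemma~\ref{lemma:func}). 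No pushforward, no discriminant double cover, no torsion bookkeeping over $\Delta$ is needed: the single inclusion $\omega_{X/Z}\hookrightarrow f^*\sO_Z(H)$ absorbs all the ``positivity in the base direction'' of $\omega_{X/Z}$ in one step, and the exponent $i-k$ is already bounded by $j$.
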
 

\begin{proof} As just noticed it suffices to treat Mori contractions $f: X \to Z$ where $X$ has terminal singularities. 
Since $Z$ is not uniruled, we may assume that $\dim Z = 2$; otherwise $Z$ is a curve of genus $g \geq 1$ and there is nothing to prove. 
Further, since $K_Z$ is pseudoeffective, only the case $q = 1$ needs to be treated. Now
 $f$ is a generically a conic bundle \cite[4.1]{AW95}. More precisely, the singular locus of $X$ being finite, 
there is a finite set $B = f({\rm Sing}(X))$ in $Z$ such that, setting $Z_0 = Z \setminus B$ and $X_0 = f^{-1}(Z_0)$,
the map $f_0: X_0 \to Z_0$ is a conic bundle with only finitely many non-reduced fibers. Furthermore, $f^{-1}(B)$ is one-dimensional. 

Since $-K_X$ is relatively ample and relatively globally generated on $X_0$, 
we can choose a very ample Cartier divisor $H$ on $Z$ such that $-K_{X/Z} + f^* H=: A$ is ample and satisfies $H^0(X, \sO_X(A)) \neq 0$.
In particular there is an injection 
\begin{equation} \label{injectomega}
\omega_{X/Z} \hookrightarrow \sO_X(f^* H).
\end{equation}
{\it We claim} that for every $c>1$ there exist positive integers $k, j$ such that $k \geq cj$ such that
$$
H^0(X_0, f^* S^k \Omega_Z \otimes \sO_X(j (A+f^* H))) \neq 0.
$$
Since $f^{-1}(B)$ has codimension at least two, this shows that $f^* \Omega_Z$ is pseudoeffective. Thus $\Omega_Z$ is 
pseudoeffective by Lemma \ref{lemma:func}.

{\em Proof of the claim.} Since $\Omega_X$ is pseudoeffective, there  exist positive integers $i,j$ such that $i \geq 2cj$ such that
$$
H^0(X_0, S^i \Omega_X \otimes \sO_X(j A)) \neq 0.
$$
We consider the canonical exact sequence
 \begin{equation} \label{EQ}  
 0 \to f^*(\Omega_{Z_0}) \buildrel {df} \over {\to} \Omega_{X_0} \to \Omega_{X_0/Z_0} \to 0.
 \end{equation} 
Since $X_0 \rightarrow Z_0$ is a conic bundle, we know that $df$ cannot vanish along a divisor $D$. Thus $\Omega_{X/Z}$ is torsion free and the singular locus of $ \Omega_{X/Z}$ is at most one-dimensional. 
Thus we get
$$
H^0(X_0, f^* S^k \Omega_{Z_0} \otimes \omega_{X_0/Z_0}^{[i-k]} \otimes \sO_X(j A)) \neq 0.
$$ 
for some $k \in \{ 0, \ldots, i \}$.
Since $\omega_{X_0/Z_0}^{[i-k]} \otimes \sO_X(j A)$ has negative degree on the fibres of $f$ if $i-k>j$ we see that
$k \in \{ i-j, \ldots, j\}$. Note that since $i \geq 2cj$ and $c>1$ this implies that $k \geq j$.  
Using the morphism \eqref{injectomega} obtain
$$
H^0(X_0, f^* S^k \Omega_{Z_0} \otimes \sO_X(j A+(i-k)f^* H)) \neq 0.
$$
Since $i-k \leq j$ we finally obtain the claim.
\end{proof} 

\begin{remark*}
The key point of the proof above is that the morphism $df$ does not vanish along a divisor $D_0$.
In higher dimension, since the total space of the Mori fibre space is not necessarily smooth, this might very well happen.
Then
these type of arguments only show that $f^*(\Omega^q(D)$ is pseudoeffective where $D$ has support inside the support of $D_0$. 
At least in dimension three we also see that 
$$ H^0(X,S^m\Omega_X) = H^0(Z, S^m\Omega_Z)$$
where $X \dasharrow Z $ is the MRC fibration of a uniruled smooth threefold $X$ and $Z$ is smooth. 
\end{remark*}

\begin{corollary} \label{corollarysurfaces}
Let $X$ be a smooth projective surface such that $\Omega_X$ is pseudoeffective. If $\kappa (X) \leq 0$, then 
Conjecture \ref{conj:nv} holds. 
\end{corollary}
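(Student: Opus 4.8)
The idea is to run through the Enriques–Kodaira classification of smooth projective surfaces with $\kappa(X) \leq 0$, and in each case either show that $\Omega_X$ cannot be pseudoeffective (so the hypothesis is vacuous), or exhibit the required symmetric differential directly, or reduce to a case already handled. Since $\kappa(X) \leq 0$, the possibilities are: $X$ rational, $X$ ruled over a curve of genus $\geq 1$, bielliptic (hyperelliptic), Enriques, abelian, K3, or a surface with $\kappa(X) = -\infty$ that is not ruled — but by classification the last class is empty, so $\kappa(X) = -\infty$ means $X$ is ruled (possibly rational).

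First I would dispose of the uniruled cases. If $X$ is rational, then $X$ is rationally connected, so by Theorem \ref{thm:rc} the bundle $\Omega_X$ (equivalently $\Omega^q_X$ for all $q$) is not pseudoeffective; hence the hypothesis of the corollary is not satisfied and there is nothing to prove. If $X$ is ruled but not rational, let $f: X \to B$ be the ruling over a curve $B$ of genus $g(B) \geq 1$; this is the MRC fibration. Then Conjecture \ref{conj:uniruled} in this (trivial) case is exactly Proposition \ref{propositionuniruled} together with its converse discussion: $\Omega^1_X$ is pseudoeffective iff $\Omega^1_B$ is, and $\Omega^1_B$ is pseudoeffective since $g(B) \geq 1$ (indeed $\Omega^1_B$ is a line bundle of nonnegative degree). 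Moreover $H^0(X, S^m \Omega^1_X) \supseteq H^0(X, f^* S^m\Omega^1_B) = H^0(B, S^m\Omega^1_B) \neq 0$ already for $m=1$ if $g(B)\geq 1$ and $f$ is a morphism — which we may assume by Proposition \ref{prop:bir}. So Conjecture \ref{conj:nv} holds for ruled non-rational surfaces.

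Next the case $\kappa(X) = 0$, where $X$ is minimal of one of the four types: abelian, K3, Enriques, or bielliptic, and $K_X \equiv 0$. Here I would invoke Proposition \ref{prop:pb1}: since $X$ is smooth (in particular smooth in codimension two) with $K_X \equiv 0$, the three conditions there are equivalent, and in particular $\Omega^1_X$ pseudoeffective implies $\kappa_1(X) \geq 0$, i.e. $H^0(X, S^{[m]}\Omega^1_X) = H^0(X, S^m\Omega^1_X) \neq 0$ for some $m$. That is precisely the content of Conjecture \ref{conj:nv} for $q=1$ on these surfaces. (One can also note concretely: abelian surfaces have $\Omega^1_X$ trivial; Enriques and bielliptic surfaces admit a quasi-étale cover by an abelian or K3 surface, and K3 surfaces with pseudoeffective $\Omega^1_X$ are ruled out or handled by the Kummer discussion in Example \ref{exampleproducttype} and Proposition \ref{prop:pb1}.) Finally, I should remark that for $\kappa(X) \leq 0$ we may always reduce to $X$ minimal by Proposition \ref{prop:bir},b), which preserves both the pseudoeffectivity of $\Omega^1_X$ and the nonvanishing of symmetric differentials, and that $\kappa(X)$ is unchanged under blow-down.

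The main obstacle is not any single hard estimate but making sure the case analysis is exhaustive and that each degenerate sub-case (most delicately: whether $\Omega^1_X$ can actually be pseudoeffective on a K3 surface, and the status of Enriques surfaces, whose canonical bundle is torsion but not trivial) is genuinely covered by the quoted results. The cleanest route is to lean entirely on Theorem \ref{thm:rc} for $\kappa = -\infty$ and on Proposition \ref{prop:pb1} for $\kappa = 0$ (noting $K_X \equiv 0$ holds for all four minimal types of Kodaira dimension zero), so that the corollary becomes a short bookkeeping argument over the classification rather than a new computation.
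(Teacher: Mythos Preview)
Your proposal is correct and follows essentially the same approach as the paper's proof. The paper handles $\kappa(X)=-\infty$ in one stroke via Proposition \ref{propositionuniruled} (pseudoeffectivity of $\Omega_X$ forces the MRC base to be a curve of genus $\geq 1$, hence $q(X)>0$), whereas you split into rational versus ruled non-rational; and for $\kappa(X)=0$ both you and the paper reduce to the minimal model via Proposition \ref{prop:bir} and then invoke Proposition \ref{prop:pb1}, using the birational invariance of $H^0(X,S^m\Omega_X^1)$ between smooth models to conclude.
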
 

\begin{proof}
If $\kappa(X)=-\infty$, the surface $X$ is uniruled. Since $\Omega_X$ is pseudoeffective, by Proposition \ref{propositionuniruled} the base of the MRC fibration is a curve of genus at least one. Thus we have $q(X)>0$.

If $\kappa(X)=0$, let $X_{min}$ be the minimal model of $X$. Then $\Omega_{X_{min}}$ is pseudoeffective by Proposition \ref{prop:bir}.
Thus by Proposition \ref{prop:pb1} one has $H^0(X_{min},S^{m}\Omega^1_{X_{min}})) \ne 0$ for some positive integer $m$. 
Since $X_{min}$ is smooth, we have an isomorphism
$$
H^0(X,S^{m}\Omega^1_X))  \simeq H^0(X_{min},S^{m}\Omega^1_{X_{min}})).
$$
\end{proof}

In the next two sections we will deal with surfaces $X$ of Kodaira dimension $\kappa (X) = 1$.

\section{Elliptic surfaces: general set-up and the non-isotrivial case} \label{sectionelliptic}

We are starting here to study elliptic fibrations $f: X \to B$ with $\kappa (X) = 1$ towards Conjecture \ref{conj:nv}. If 
$f$ is almost smooth, i.e., the only singular fibers are multiples of elliptic curves, then $c_2(X) = e(X) = 0$ \cite[III, Prop.11.4]{BHPV04}. 
Thus by Noether's formula $\chi(X, \sO_X) \leq 0$, 
 and therefore $q(X) > 0$, so there
is nothing to prove. 
We first fix notations. 

\subsection{Elliptic fibrations: the setup}

\begin{setup} \label{setup} 
{\rm 
Let $X$ be a smooth projective surface, and let $\holom{f}{X}{B}$
be an elliptic fibration onto a smooth curve $B$.
We set 
$$
D = \sum_{b \in B} f^*b - (f^* b)_{red},
$$
so $D$ is an effective divisor having support exactly on the irreducible components of a fibre that are not reduced.
The exact sequence
$$
0 \rightarrow  f^*\Omega_ B  \to \Omega_X \to \Omega_{X/B} \rightarrow 0
$$
induces an exact sequence
\begin{equation} \label{sequencemain}
0 \to f^*\Omega_ B (D) \to \Omega_X \to \sI_Z \otimes \omega_{X/B}(-D) \to 0,
\end{equation} 
where $Z$ is a local complete intersection scheme of codimension two
whose support coincides with the singular points of the reduction $(f^* b)_{red}$ of
the fibres \cite[Prop.3.1(iii)]{Ser96}. 

We denote by $\holom{\pi}{\PP(\Omega_X)}{X}$ the projectivisation,
and by $\zeta \rightarrow \PP(\Omega_X)$ the tautological class.

We set 
\begin{equation} \label{defY}
Y:=\PP(I_Z \otimes  \omega_{X/B}(-D)) \subset \PP(\Omega_X).
\end{equation} 
Since $I_Z$ is a local complete intersection of codimension two, the projectivisation coincides with
the blow-up of the ideal sheaf $I_Z$ (see Setup \ref{setupideal}). In particular $Y$ is a prime divisor in $\PP(\Omega_X)$ and
\begin{equation} \label{formulaY}
[Y] = \zeta - \pi^* c_1(f^*\Omega_ B (D)).
\end{equation}

Denote by $K \subset B$ the finite set of points such that the fibre $f^* b$
is not multiple and not reduced. 
The divisor $D$ can be decomposed as 
\begin{equation} \label{divisorD}  
D = \sum_{i=1}^s (m_i-1) F_i + \sum_{b \in K} D_{0,b}, 
\end{equation} 
where the $F_i$ are the reductions of multiple $f$-fibres 
and 
$$
D_0 := \sum_{b \in K} D_{0,b}
$$ 
is simply the remainder, i.e. the part of $D$ coming from non-multiple, non-reduced fibres. It follows from Kodaira's classification \cite[V, Sect. 7, Table 3]{BHPV04} 
that the support of $D_0$ does not contain any fibre, so the intersection matrix
of $D_0$ is negative definite by Zariski's lemma \cite[III, Lemma 8.2]{BHPV04}.
It is now straightforward to check that \eqref{divisorD} is the Zariski decomposition of $D$ (see Subsection \ref{subsectionfibered})
with $P=\sum_{i=1}^s (m_i-1) F_i$.

If $f$ is relatively minimal, the canonical bundle formula \cite[V, Thm.12.1, Prop.12.2]{BHPV04} 
holds: 
\begin{equation} 
\label{cbf} 
\omega_X \simeq f^*(\omega_B \otimes (R^1 f_* \sO_X)^*) \otimes \sO_X(\sum_{i=1}^s (m_i-1)F_i), \end{equation} 
and $\deg (R^1 f_* \sO_X)^*=\chi(X, \sO_X)$.

Finally assume that $f$ is relatively minimal and $B = \PP^1$. Then \eqref{cbf}
implies that
\begin{equation} \label{formulaKXL}
K_{X/B}-D \sim a F -  D_0
\end{equation}
where $a=\chi(X, \sO_X)$ and $F$ is a general fiber.
}
\end{setup}

\begin{proposition} \label{propA}
In the situation of Setup \ref{setup}, suppose that
$f^*\Omega_ B (D)$ is pseudoeffective. Then we have $\tilde q(X) > 0$. 
\end{proposition}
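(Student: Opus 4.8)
The plan is to reduce the statement to the multiple fibres of $f$ and then to remove them by a base change that is \emph{étale} over $X$. First I would put $L := f^*\Omega_B(D)$; since $X$ is smooth and $L$ is a saturated rank-one subsheaf of the locally free sheaf $\Omega_X$, it is a line bundle, pseudoeffective by hypothesis, and restricting \eqref{sequencemain} to a general fibre shows $L|_F \simeq \sO_F$, so $c_1(L)\cdot F = 0$. If $g(B) \ge 1$ there is nothing to prove, because $f^*$ embeds $H^0(B,\Omega_B)$ into $H^0(X,\Omega_X)$ and hence $q(X) \ge g(B) > 0$; so from now on I would assume $B \cong \PP^1$.

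The next step is to extract the numerical class of $L$. Using \eqref{divisorD} together with $F_i \equiv \tfrac1{m_i}F$ (because $m_i F_i = f^*(p_i) \equiv F$), one gets
$$ c_1(L) \equiv \alpha F + D_0, \qquad \alpha := -2 + \sum_{i=1}^{s}\Bigl(1 - \tfrac1{m_i}\Bigr), $$
where $p_1,\dots,p_s$ are the points carrying the multiple fibres $m_iF_i$. Here $F$ is nef with $F^2 = 0$ and $F\cdot D_0 = 0$, while $D_0$ is a vertical effective divisor containing no fibre, hence has negative definite intersection matrix by Zariski's lemma. The same bookkeeping with Zariski decompositions as in the proof of Lemma \ref{lemS1} then shows that a pseudoeffective class numerically equal to $\alpha F + D_0$ must have positive part $\equiv \alpha F$; in particular $\alpha \ge 0$. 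Since $\alpha \ge 0$ means $\sum_i(1 - 1/m_i) \ge 2$, which is impossible for $s \le 2$, this forces $s \ge 3$.

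Finally I would invoke the classical theory of multiple fibres of elliptic surfaces. The quantity $-\alpha = 2 - \sum_i(1 - 1/m_i)$ is the orbifold Euler characteristic of the orbifold curve $(\PP^1;(p_i,m_i)_i)$; since it is $\le 0$ (and $s \ge 3$) this orbifold is good, so there are a smooth connected projective curve $C$ and a finite morphism $\pi\colon C \to \PP^1$ whose ramification index over each $p_i$ equals $m_i$ (étale, or at worst ramified only away from the $p_i$, elsewhere). Riemann–Hurwitz gives $2g(C) - 2 \ge (\deg\pi)\,\alpha \ge 0$, so $g(C) \ge 1$. Let $X'$ be the normalisation of $X \times_{\PP^1} C$, with induced elliptic fibration $f'\colon X' \to C$; since $\pi$ is ramified to order exactly $m_i$ over each multiple fibre, the base change removes all multiple fibres and the natural projection $p\colon X' \to X$ is étale (\cite[V]{BHPV04}), so $X'$ is smooth. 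Then $f'^*$ embeds $H^0(C,\Omega_C)$ into $H^0(X',\Omega_{X'})$, whence $q(X') \ge g(C) \ge 1$, and therefore $\tilde q(X) \ge q(X') > 0$.

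The step I expect to be the main obstacle is the last one: arguing cleanly that a base change ramified to the exact order $m_i$ over a multiple fibre produces a surface $X'$ that is genuinely étale (not merely quasi-étale, or birational-to-étale) over $X$. This rests on the classification of multiple fibres (all of type $_mI_n$) and the local model of the logarithmic transformation, and is entwined with the existence of a cover $\pi\colon C \to \PP^1$ realising the prescribed ramification; both are standard in the elliptic-surface literature but should be given precise references in the write-up. A secondary point to be careful about is simply that the numerical computation above does not require $f$ to be relatively minimal, since $F_i$ is nef and $m_iF_i \equiv F$ in all cases.
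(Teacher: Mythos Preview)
Your proof is correct and follows essentially the same route as the paper's. Both reduce to $B\cong\PP^1$, establish the inequality $\sum_i(1-1/m_i)\ge 2$ (hence $s\ge 3$) from the pseudoeffectivity of $f^*\Omega_B(D)$, and then invoke the existence of a finite cover $\tilde B\to\PP^1$ with ramification exactly $m_i$ over each $p_i$ so that the induced $\tilde X\to X$ is \'etale and $g(\tilde B)\ge 1$.

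The only differences are cosmetic. The paper obtains the key inequality by first using Lemma~\ref{lemS1}/Remark~\ref{remarkrationalcurve} to get $\kappa(f^*\Omega_B(D))\ge 0$ and then comparing an effective divisor in $|m(D-2F)|$ with the nef part of $D$; you compute the numerical class $c_1(L)\equiv\alpha F+D_0$ directly and read off $\alpha\ge 0$ from the Zariski decomposition. These are equivalent arguments. For the final step the paper simply cites \cite[IV.9.12]{FK80}, which is exactly the Fuchsian/orbifold uniformisation you allude to; this reference resolves the concern you flag in your last paragraph. One small caution: your parenthetical ``\'etale, or at worst ramified only away from the $p_i$, elsewhere'' should be tightened to ``unramified away from the $p_i$''---extra ramification would genuinely spoil \'etaleness of $X'\to X$. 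The good-orbifold statement (orbifold Euler characteristic $\le 0$ and $s\ge 3$) gives precisely a cover with no additional ramification, so the argument goes through as written once this is made explicit.
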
 

\begin{proof}  
The statement is trivial if $g(B) \geq 1$, so assume $B \simeq \PP^1$.
We follow the philosophy of \cite[Sect.3.5]{Cam04}.
By Remark \ref{remarkrationalcurve} one has $\kappa (f^*\Omega_ B (D)) \geq 0$. 
Choose a positive integer $m$ and a  non-zero section $s \in H^0(X, (f^*\Omega_ B (D))^{\otimes m})$. 
Let $E$ be the divisor defined by $s$. Note that $E$ is supported on fibers of $f$ and that 
$$ 
E \sim m(D-2F),
$$
where $F$ is a general fiber of $f$. 
By the discussion in Setup \ref{setup} we know that the nef part of $D$ is represented by
$\sum_{i=1}^s (m_i-1) F_i \equiv \lambda F$ where the $F_i$ are the reductions of multiple $f$-fibres. 
Since
$$ 
D \sim_{\mathbb Q} \frac{1}{m} E + 2F,
$$
is a decomposition in effective divisors and $2F$ is nef, we obtain $\lambda \geq 2$.

Introducing the $\mathbb Q$-divisor $\Delta = \sum_{i=1}^s (1-\frac{1}{m_i}) p_i$ with $p_i = f(F_i)$, 
we have $f^* \Delta = \sum_{i=1}^s (m_i-1) F_i$, so
$$
\sum_{i=1}^s (1-\frac{1}{m_i}) = \lambda \geq 2.
$$
Thus $f$ has at least three multiple fibers. 
Then there is a ramified base change $\tilde B \to B$ inducing an \'etale map $$\tilde X \to X$$
 such that $\tilde f: \tilde X \to \tilde B$ has no multiple fibers and $g(\tilde B) \geq 1$,
see e.g. \cite[IV.9.12]{FK80}.
Thus $\tilde q(X) \ne 0$. 
\end{proof}

\subsection{The non-isotrivial case} \label{subsectionnonisotrivial}

In the situation of Setup \ref{setup}, assume furthermore that
the elliptic fibration $f$ is not isotrivial. Let $F$ be a general fibre, then its Kodaira-Spencer class is not zero. Thus we have a non-split extension defined by the restriction of \eqref{sequencemain} to $F$ 
$$
0 \rightarrow \sO_F \rightarrow \Omega_X|_F \rightarrow \Omega_F \simeq \sO_F \rightarrow 0.
$$
Denote by $\zeta_F \rightarrow \PP(\Omega_X|_F)$ the tautological class. Then the (1,1)-class $\zeta_F$ is nef with $\zeta_F^2=0$, moreover it is represented by the current of integration over the curve $C$ defined by the quotient $\Omega_X|_F \rightarrow \sO_F$. We recall the following result:
\begin{lemma} \cite[Ex. 1.7]{DPS94} \label{lemmaDPS}
Let $h_F$ be any singular metric on $\zeta_F$ such that the curvature current $\Theta_{h_F}(\zeta_F)$ is positive. Then 
$$
\Theta_{h_F}(\zeta_F) = [C]
$$
where $[C]$ is the current of integration over $C$.
\end{lemma}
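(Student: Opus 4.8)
The plan is to argue that any positively curved singular metric on $\zeta_F$ must have its full curvature mass concentrated on $C$, using the numerical fact $\zeta_F^2 = 0$ together with the nefness of $\zeta_F$. First I would record the setup: $\PP(\Omega_X|_F)$ is a ruled surface over the elliptic curve $F$; since the extension $0 \to \sO_F \to \Omega_X|_F \to \sO_F \to 0$ is non-split (the Kodaira–Spencer class is nonzero), $\Omega_X|_F$ is the unique non-trivial such extension, hence it is a semistable but not polystable bundle of degree $0$. The curve $C \subset \PP(\Omega_X|_F)$ defined by the quotient $\Omega_X|_F \to \sO_F$ is the unique section with $C^2 = 0$, and one has $\zeta_F \equiv [C]$ as cohomology classes, with $\zeta_F$ nef and $\zeta_F^2 = C^2 = 0$.

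Now let $h_F$ be a singular metric on $\zeta_F$ with $\Theta_{h_F}(\zeta_F) =: T \geq 0$ a closed positive $(1,1)$-current. Since $T$ represents the class $\zeta_F$, we have $\{T\} = \{[C]\}$ in $H^{1,1}$. The key step is a Siu-type decomposition / comparison argument: write $T = \sum_k \nu_k [D_k] + R$ where the $D_k$ are the curves along which $T$ has positive Lelong number and $R$ is the residual part with no such curves. I expect to show that the only candidate for a $D_k$ is $C$ itself, because $C$ is the only irreducible curve in the nef boundary direction — any other curve $D$ pairs strictly positively with the nef class $\zeta_F$ unless $D \equiv C$ numerically, and on this ruled surface the only effective curve numerically proportional to $C$ is $C$ (all fibers $\ell$ satisfy $\ell \cdot \zeta_F = 1 \neq 0 = C \cdot \zeta_F / $ appropriate normalization, so fibers cannot occur, and a multisection would force positive self-intersection). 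Hence $T = \nu [C] + R$ with $R$ carrying no mass on curves. Intersecting with the nef class: $0 = \zeta_F^2 = \{T\}\cdot \zeta_F = \nu\, (C\cdot \zeta_F) + \{R\}\cdot\zeta_F = \nu \cdot 0 + \{R\}\cdot \zeta_F$, so $\{R\}\cdot\zeta_F = 0$; and since $\{R\} = (1-\nu)\zeta_F$ as classes, we get $(1-\nu)\,\zeta_F^2 = 0$, which is automatic and does not yet pin down $\nu$. To force $\nu = 1$ I would instead use that $\{R\} = \zeta_F - \nu[C] = (1-\nu)\zeta_F$ must be pseudoeffective (being represented by $R \geq 0$), and that $R$ has no mass on $C$; but $\zeta_F$ restricted to $C$ has degree $\zeta_F\cdot C = 0$, while any effective current cohomologous to $(1-\nu)\zeta_F$ with $1-\nu > 0$ would have to charge $C$ or be nef of self-intersection zero supported away from $C$ — the latter is impossible on a ruled surface over an elliptic curve with this Harder–Narasimhan structure, since the only nef class of square zero is $\zeta_F$ and its unique effective representative in the semistable-unstable case is $C$.

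The main obstacle, and the point I would spend the most care on, is the last dichotomy: ruling out a positive residual current $R$ cohomologous to a positive multiple of $\zeta_F$ but not equal to $[C]$. This is exactly the content of \cite[Ex.~1.7]{DPS94} and uses the specific geometry of the non-split extension — the bundle $\Omega_X|_F$ is the Atiyah bundle (unique non-split self-extension of $\sO_F$), for which the curve $C$ is the only curve in the class $\zeta_F$ and the Zariski/Siu decomposition of any effective representative is forced to be concentrated on $C$. Concretely one checks $h^0(\PP(\Omega_X|_F), \sO(m\zeta_F)) = h^0(F, S^m(\Omega_X|_F)) = 1$ for all $m \geq 1$ (since $S^m$ of the Atiyah bundle has a one-dimensional space of sections), so the linear system $|m\zeta_F|$ consists of the single divisor $mC$; hence the only closed positive current with analytic singularities in $\zeta_F$ is $[C]$, and a limiting/regularization argument (Demailly) upgrades this to arbitrary positive currents $T$ in the class, giving $T = [C]$ as claimed. $\square$
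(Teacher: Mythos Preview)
The paper does not prove this lemma; it is quoted from \cite[Ex.~1.7]{DPS94}. Your sketch correctly sets up the Siu decomposition and correctly isolates the hard step: ruling out a residual positive current $R$ in the class $(1-\nu)\zeta_F$ with no divisorial part. But your proposed resolution of that step has a genuine gap. The implication ``$|m\zeta_F|=\{mC\}$ for all $m$, hence the only closed positive current with analytic singularities in the class $\zeta_F$ is $[C]$'' is not valid: the section count only controls effective \emph{divisors} in $|m\zeta_F|$, and does not exclude a \emph{smooth} semipositive representative of $\zeta_F$ --- which is a positive current with (trivial) analytic singularities and is certainly not $[C]$. Excluding such a smooth form is precisely the content of the DPS example and does not follow from $h^0(m\zeta_F)=1$. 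A related symptom: since $\zeta_F$ is nef, its divisorial Zariski decomposition has zero negative part, so the Nakayama--Boucksom invariant satisfies $\sigma_C(\zeta_F)=0$, not $1$; asymptotic base-locus or regularization arguments therefore cannot by themselves produce the required lower bound $\nu(T,C)\ge 1$. (Demailly regularization also only gives $T_\epsilon\ge -\epsilon\omega$, hence lands in perturbed classes, which you do not address.)

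The actual argument in \cite{DPS94} is different in kind: the complement $U=\PP(\Omega_X|_F)\setminus C$ is biholomorphic to $\C^2/\Gamma$ for a rank-two lattice $\Gamma$ acting by translations, so $U$ is a two-dimensional abelian complex Lie group acting on $\PP(\Omega_X|_F)$ with open orbit $U$ and invariant divisor $C$. One then analyses directly the pull-back to $\C^2$ of the quasi-psh potential of $T$ (using the growth constraint across $C$ forced by positivity) to obtain $\nu(T,C)\ge 1$, whence $T-[C]\ge 0$ lies in the zero class and vanishes. The decisive input is this explicit symmetry of the non-split Atiyah extension, not a section count or a regularization scheme.
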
 

\begin{proposition} \label{propositionLpseff}
In the situation of Setup \ref{setup}, assume that 
$f$ is not isotrivial. If $\zeta$ is pseudoeffective, the line bundle $f^*\Omega_ B (D)$
is pseudoeffective.
\end{proposition}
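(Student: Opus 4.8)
The plan is to turn the pseudoeffectivity of $\zeta$ into a closed positive current, show that this current contains the prime divisor $Y$ of \eqref{defY} with multiplicity at least one, and subtract it off. So suppose $\zeta$ is pseudoeffective and fix a closed positive $(1,1)$-current $T$ on $P:=\PP(\Omega_X)$ with $\{T\}=\zeta$. The crux will be to prove that the generic Lelong number $\nu(T,Y)$ is at least $1$. Granting this, Siu's decomposition gives $T\ge [Y]$, so $T-[Y]$ is again a closed positive current, and by \eqref{formulaY} its class is $\zeta-[Y]=\pi^*c_1(f^*\Omega_B(D))$. Hence $\pi^*(f^*\Omega_B(D))$ is pseudoeffective, and since $\pi$ is surjective, Lemma \ref{lemma:func} shows that the line bundle $f^*\Omega_B(D)$ is pseudoeffective on $X$, which is the assertion.

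The input that makes $\nu(T,Y)\ge 1$ plausible is the description of $T$ on a general fibre. Set $g:=f\circ\pi:P\to B$. By slicing theory, for $b$ outside a pluripolar subset of $B$ the restriction $T|_{P_b}$ to $P_b=\PP(\Omega_X|_{F_b})$ is a well-defined closed positive current in the class $\zeta_{F_b}$. For general $b$ the fibre $F_b$ is a smooth elliptic curve disjoint from $\supp Z$, with $D|_{F_b}=0$ and $\omega_{X/B}|_{F_b}\simeq\sO_{F_b}$, so the quotient sheaf of \eqref{sequencemain} restricts to $\sO_{F_b}$ and $Y\cap P_b$ is exactly the section $C_{F_b}$ cut out by the quotient $\Omega_X|_{F_b}\to\Omega_{F_b}$. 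Since $f$ is not isotrivial, $\Omega_X|_{F_b}$ is the nonsplit extension, so $\zeta_{F_b}$ is nef with $\zeta_{F_b}^2=0$ and, by Lemma \ref{lemmaDPS}, \emph{every} closed positive current in the class $\zeta_{F_b}$ equals $[C_{F_b}]$. Therefore $T|_{P_b}=[C_{F_b}]=[Y\cap P_b]$ for general $b$.

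It remains to deduce $\nu(T,Y)\ge 1$ from this fibrewise identity; this is a local statement near a general point $y_0$ of $Y$, lying over a general point $b_0$ of $B$. I would choose local coordinates $(w,z,t)$ centred at $y_0$ with $Y=\{w=0\}$ and $g$ given by $t$ (so $P_b=\{t=b\}$ for $b$ near $b_0$), and let $\varphi$ be a local plurisubharmonic potential for $T$. The fibrewise identity says that for general fixed $b$ the function $w\mapsto\varphi(w,z,b)-\log|w|$ is harmonic, hence bounded near $y_0$; on the other hand $\varphi$ is uniformly bounded above on a small torus $\{|w|=\varepsilon,\ |z|\le\varepsilon,\ |t-b_0|\le\varepsilon\}$, being plurisubharmonic on a fixed polydisc. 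The maximum principle in the $w$-variable then gives $\varphi(w,z,b)\le\log|w|+C$ near $y_0$ with $C$ independent of $b$, so $\nu(T,y)\ge 1$ for $y$ in a dense (hence, by upper semicontinuity of $y\mapsto\nu(T,y)$, in all) of $Y$. In particular $\nu(T,Y)\ge 1$, and the proof is complete.

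The main obstacle is precisely this last step. The usual restriction inequality for Lelong numbers runs the wrong way ($\nu(T|_{P_b},y)\ge\nu(T,y)$), so the pointwise equality $\nu(T|_{P_b},y)=1$ by itself gives no lower bound for $\nu(T,y)$; one must genuinely exploit that $T|_{P_b}=[Y\cap P_b]$ holds for the \emph{whole} family of fibres, via the uniform maximum-principle argument above (equivalently, one could apply the same argument to the residual current in the Siu decomposition of $T$ and derive a contradiction). A secondary point, already anticipated by Setup \ref{setup}, is that non-isotriviality is exactly what forces $\zeta_{F_b}$ to be represented by the single current $[C_{F_b}]$ via Lemma \ref{lemmaDPS}; for an isotrivial fibration $\Omega_X|_{F_b}$ may split, this rigidity fails, and with it the whole scheme — which is why the isotrivial case is treated separately.
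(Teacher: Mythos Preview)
Your proof is correct and follows essentially the same strategy as the paper: restrict the positive current representing $\zeta$ to a general fibre, invoke Lemma~\ref{lemmaDPS} to identify the restriction with $[C_{F_b}]$, deduce that the generic Lelong number along $Y$ is at least $1$, and subtract $[Y]$ via Siu's decomposition to obtain a positive current in the class $\pi^*c_1(f^*\Omega_B(D))$.

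The only organisational difference is where the Siu decomposition enters. The paper takes the Siu decomposition $\Theta_h(\zeta)=\sum_k\nu_k[Y_k]+P$ \emph{first}, restricts the whole identity to a very general fibre, and reads off directly from $[C]=\sum_k\nu_k[Y_k\cap F]+P|_F$ that some $Y_k$ equals $Y$ with $\nu_k=1$ and $P|_F=0$. You instead restrict the undecomposed current, obtain $T|_{P_b}=[C_{F_b}]$, and then supply a hands-on local argument (the pluriharmonic remainder $\varphi(\cdot,\cdot,b)-\log|w|$ together with the maximum principle, uniformly in $b$) to pass from the fibrewise identity to $\nu(T,Y)\ge 1$. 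Your explicit local step is in fact precisely what justifies the paper's terser assertion ``$P|_F=0$'': as you correctly note, the restriction inequality $\nu(T|_{P_b},y)\ge\nu(T,y)$ alone runs the wrong way, and it is the uniformity over the family of fibres that closes the gap. One minor point worth making explicit in your write-up: the function $\varphi(\cdot,\cdot,b)-\log|w|$ extends across $\{w=0\}$ because $dd^c$ of the difference vanishes as a distribution on the whole slice, hence the difference is pluriharmonic and in particular smooth; this is what licenses the maximum principle on the full disc rather than the punctured one.
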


\begin{proof}
By assumption there exists a singular metric $h$ on $\zeta$ such that $\Theta_{h}(\zeta) \geq 0$. Then we can consider the Siu decomposition
$$
\Theta_{h}(\zeta) = \sum_k \nu(\Theta_{h}(\zeta), Y_k) [Y_k] + P
$$
where $Y_k \subset \PP(\Omega_X)$ are prime divisors and $P$ is a positive closed current such that the (countably many) irreducible components of $E_+(P)$ \cite[Sect.2.2.1]{Bou04} have codimension at least two. 
Let now $F$ be a very general $f$-fibre, so that $\PP(\Omega_X|_F)$ does not contain any positive dimensional irreducible component
of $E_+(P)$ 
then the restriction $h_F$ of the metric $h$ to $\PP(\Omega_X|_F)$ is well-defined and yields a singular metric on $\zeta_F$.
Moreover we have a decomposition
$$
\Theta_{h_F}(\zeta_F) = (\Theta_{h}(\zeta))|_F =
\sum \nu(\Theta_{h}(\zeta), Y_k) [Y_k \cap F] + P|_F.
$$
By Lemma \ref{lemmaDPS} we see that $P|_F=0$, and there exists a unique $Y_k$ (say $Y_1$) such that $Y_1 \cap F=C$
and $\nu(\Theta_{h}(\zeta), Y_1)=1$. 

Since the intersection $Y_1 \cap F$ coincides with $C$ on all the general fibres and the quotient
$\Omega_X|_F \rightarrow \Omega_F$ is the restriction of the quotient $\Omega^1_X \to I_Z \otimes \omega_{X/B}(-D)$
we obtain that $Y_1=Y$. Thus we have
$$
\Theta_{h}(\zeta) -[Y] = \sum_{k \geq 2} \nu(\Theta_{h}(\zeta), Y_k) [Y_k] + P
$$
which is a positive current. By \eqref{formulaY} this implies the statement.
\end{proof}

By Proposition \ref{propA} and \ref{propcover2}, we conclude

\begin{corollary} \label{cor:noniso} Let $X$ be a smooth projective surface, and
let $f: X \to B$ be a non-isotrivial elliptic fibration 
Then the following are equivalent:
\begin{enumerate}
\item $\Omega^{1}_X$ is  pseudoeffective ;
\item we have $\widetilde q(X) > 0$ ;
\item we have $H^0(X,S^{m}\Omega^1_X)) \ne 0$ for some positive integer $m$
\end{enumerate}
\end{corollary}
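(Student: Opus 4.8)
The plan is to obtain all three equivalences by combining Proposition \ref{propositionLpseff}, Proposition \ref{propA} and Proposition \ref{propcover2}; the geometric substance lies entirely in those statements, and everything else is formal. First I would note that if $g(B) \geq 1$ there is nothing to prove: $f^*$ embeds $H^0(B,\Omega^1_B)$ into $H^0(X,\Omega^1_X)$, so $q(X) > 0$ and all three conditions hold automatically. Thus one may assume $B \simeq \PP^1$, which is exactly the situation treated in Proposition \ref{propA}.

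Next I would dispose of the two easy implications. For $(3) \Rightarrow (1)$: a non-zero section of $S^m\Omega^1_X$ yields, by taking symmetric powers, non-zero sections of $S^{km}\Omega^1_X$ for every $k \geq 1$, so the vanishing condition underlying Definition \ref{definitionpseffvb} holds with twisting parameter $j = 0$, i.e. $\Omega^1_X$ is pseudoeffective. For $(2) \Rightarrow (3)$: choose a quasi-\'etale cover $\widetilde X \to X$ with $q(\widetilde X) > 0$; since $X$ is smooth, purity of the branch locus makes this cover \'etale, hence $\widetilde X$ is again smooth, and $H^0(\widetilde X,\Omega^1_{\widetilde X}) \neq 0$ gives $\kappa_1(\widetilde X) \geq 0$. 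By Proposition \ref{propcover2} we get $\kappa_1(X) = \kappa_1(\widetilde X) \geq 0$, and since $\Omega^{[1]}_X = \Omega^1_X$ and $S^{[m]}\Omega^1_X = S^m\Omega^1_X$ on the smooth surface $X$, this says precisely that $H^0(X,S^m\Omega^1_X) \neq 0$ for some $m$.

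The only implication carrying content is $(1) \Rightarrow (2)$, and here I would argue as follows. By Definition \ref{definitionpseffvb}, pseudoeffectivity of the locally free sheaf $\Omega^1_X$ is equivalent to pseudoeffectivity of the tautological class $\zeta$ on $\PP(\Omega_X)$. Placing myself in the notation of Setup \ref{setup} (which does not require $f$ to be relatively minimal; if desired one reduces to that case by Proposition \ref{prop:bir}, noting that blow-ups of a smooth surface alter neither $\widetilde q(X)$ nor $H^0(X,S^m\Omega^1_X)$), I then invoke Proposition \ref{propositionLpseff} --- applicable because $f$ is not isotrivial --- to conclude that the line bundle $f^*\Omega_B(D)$ is pseudoeffective. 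Feeding this into Proposition \ref{propA} yields $\widetilde q(X) > 0$, which is $(2)$.

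The genuine difficulty is the one already resolved in Proposition \ref{propositionLpseff}: to pass from pseudoeffectivity of $\Omega^1_X$ to that of the base-direction subsheaf $f^*\Omega_B(D)$ one restricts a positively curved metric on $\zeta$ to a very general fibre and uses the Demailly-Peternell-Schneider rigidity of Lemma \ref{lemmaDPS} to split off the divisor $[Y]$; thereafter the step from $f^*\Omega_B(D)$ pseudoeffective to $\widetilde q(X) > 0$ is the multiple-fibre and base-change argument of Proposition \ref{propA}. Assembling these pieces into the stated equivalence is then routine, so I expect no additional obstacle beyond the careful translation between ``$\Omega^1_X$ pseudoeffective'', ``$\zeta$ pseudoeffective'' and ``$\kappa_1(X) \geq 0$'' and the bookkeeping around quasi-\'etale covers.
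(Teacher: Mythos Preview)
Your proposal is correct and follows exactly the paper's approach: the implication $(1)\Rightarrow(2)$ is Proposition~\ref{propositionLpseff} feeding into Proposition~\ref{propA}, the implication $(2)\Rightarrow(3)$ is Proposition~\ref{propcover2}, and $(3)\Rightarrow(1)$ is trivial. The paper's one-line proof (``By Proposition~\ref{propA} and~\ref{propcover2}, we conclude'') leaves Proposition~\ref{propositionLpseff} implicit since the corollary sits directly after it; you have simply spelled out the chain in full.
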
 

In summary, Conjecture \ref{conj:nv} holds for non-isotrivial elliptic fibrations.

\section{Elliptic surfaces: the isotrivial case}
\label{sectionisotrivial}

In this section we treat isotrivial elliptic fibrations and prove parts (b) and (c) of Theorem \ref{theoremmain}.

\subsection{Notation} \label{subsectionisotrivialnotation}

In the situation of Setup \ref{setup}, 
assume that
$$
f : X \rightarrow B
$$
is relatively minimal and isotrivial. Denote by $E$ the elliptic curve such that a general $f$-fibre is  isomorphic to $E$.
By \cite[Sect.2]{Ser96} there exists a smooth curve $C$ and a finite group $G$ 
acting diagonally on the product $C \times E$ such that $X$ is birational 
to the quotient $(C \times E)/G$ and the fibration $f$ corresponds to the fibration
$(C \times E)/G \rightarrow C/G \simeq B$ induced by projection on the first factor.

More precisely, denote by $\holom{q}{C \times E}{(C \times E)/G}$ the quotient map, and 
by $\holom{\bar p_C}{(C \times E)/G}{C/G}$ 
the map induced by the projection $p_C$. 
Denote by $\holom{\lambda}{R}{(C \times E)/G}$ the minimal resolution
of singularities, then the exceptional divisors are Hirzebruch-Jung strings \cite[2.0.2]{Ser96} and the singular fibres of 
$$
f_R:= \bar p_C \circ \lambda : R \rightarrow B
$$ 
are described in \cite[Thm.2.1]{Ser96}. Following Serrano, we call $f_R$ the standard model of the isotrivial fibration $f$.

The standard isotrivial fibration $f_R$ factors through its relative minimal
model. Since we assumed that $f$ is relatively minimal and the relative minimal
model of an elliptic surface is unique, we have a birational morphism 
$\holom{\mu}{R}{X}$ such that $f_R = f \circ \mu$. We summarise the construction in a commutative diagram:
\begin{equation} \label{diagramconstruction}
\xymatrix{
R \ar[rd]^{\lambda}  \ar[d]_{\mu}  & C \times E \ar[d]^q 
\\
X \ar[d]_{f}  & (C \times E)/G \ar[d]^{\bar p_C} 
\\
B \ar@{=}[r] & C/G
}
\end{equation}

In general the birational map $\mu$ is not an isomorphism  \cite[(2.4)]{Ser96},
but as shown in Lemma \ref{lem:char},  it is an isomorphism unless $E$ has complex multiplication.

\subsection{Relatively minimal standard isotrivial fibrations}

\begin{assumption} \label{assumptionstandard}
In the {\em whole subsection} we assume that 
$$
f : X \rightarrow B
$$
is a relatively minimal, isotrivial elliptic fibration that is {\em standard},
i.e. the birational map $\mu$ (cf. Notation \ref{subsectionisotrivialnotation}) is an isomorphism. 
For simplicity of the exposition we thus identify $X=R$.
\end{assumption}

The following lemmas are well-known, we include them for lack of reference:

\begin{lemma} \label{lem:fibers}
Using the Notation \ref{subsectionisotrivialnotation}, let 
$x \in C$ be a point with non-trivial stabiliser
$G_x$ on $C$.  then $G_x$ acts either by translation or as the involution
$z \mapsto -z$ on the elliptic curve $x \times E$.

In particular all the singular fibers of $f$ are either multiple elliptic curves 
or curves of type $I_0^*$ in Kodaira's classification, see e.g.,  \cite[V, Sect. 7, Table 3]{BHPV04}. 
\end{lemma}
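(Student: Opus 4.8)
The plan is to analyze the possible actions of a finite cyclic group $G_x$ on an elliptic curve $E$, using the fact that $G_x$ must act faithfully (since the action of $G$ on $C\times E$ is what produces the quotient with the stated fibration structure, and we may reduce to the stabilizer's action on the fiber $x\times E$). First I would recall the standard classification of finite-order automorphisms of an elliptic curve: any automorphism of $E$ as a complex manifold is a composition of a translation and a group automorphism fixing the origin, and the group of automorphisms of $(E,0)$ is cyclic of order $2$, $4$, or $6$ depending on the $j$-invariant. An element of $G_x$ that acts by a translation composed with a nontrivial linear part of order $n$ can, after conjugating by a translation, be arranged to have a fixed point, hence be conjugate to a genuine linear automorphism; so up to coordinate change each element of $G_x$ is either a translation or one of the linear automorphisms $z\mapsto \zeta_n z$ with $n\in\{2,4,6\}$.

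The key step is then to rule out $n=3,4,6$ using the geometry of the resulting fibration. Here I would invoke Kodaira's classification of singular fibres of a relatively minimal elliptic surface \cite[V, Sect.~7, Table 3]{BHPV04}: the fibre of $f$ over the image of $x$ in $B$ is determined by the local monodromy, i.e. by the order and type of the action of $G_x$ on $x\times E$. If $G_x$ acts by translations the quotient $(x\times E)/G_x$ is again an elliptic curve and the fibre is a multiple of a smooth elliptic curve (type $_mI_0$). If $G_x$ acts through the involution $z\mapsto -z$ (possibly composed with a translation, but that case reduces to the involution with a shifted fixed point set) the quotient is $\PP^1$ and, after taking the minimal resolution of the four resulting $A_1$-singularities on the surface, one gets the fibre $I_0^*$. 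For $n=3,4,6$ one would get fibres whose reduction is $\PP^1$ but with non-$A_1$ quotient singularities; resolving these produces, in Kodaira's list, fibres of type $IV$, $III$, $II$, $IV^*$, $III^*$, $II^*$, none of which can be multiple. The point is that on the \emph{standard} (relatively minimal) model these are exactly the fibres that occur, and the statement just records the $n=1$ (translation) and $n=2$ (involution) alternatives — so I would phrase the argument as: the automorphism $z\mapsto\zeta_n z$ with $n>2$ requires $E$ to have complex multiplication (since $\Aut(E,0)$ is then strictly larger than $\Z/2$), which is precisely the situation excluded by Assumption \ref{assumptionstandard} via Lemma \ref{lem:char} — or, if one wants the lemma to stand independently of that remark, one observes that translations and the involution are the only automorphisms of $E$ \emph{for a general elliptic curve}, and the complex-multiplication fibres $IV,III,II,\dots$ are still governed by the same list.

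For the "in particular" clause I would argue directly from the orbifold/quotient description in Notation \ref{subsectionisotrivialnotation}: over a point $b=f(x)\in B$ with $b$ the image of $x\in C$ having stabilizer $G_x$, the fibre $f^{-1}(b)$ of the standard model is the minimal resolution of $(x\times E)/G_x$ sitting inside $(C\times E)/G$, and by the first part of the lemma $G_x$ acts either by translations — giving $f^{-1}(b)=\,_mI_0$ with $m=|G_x|$ — or through the involution, giving the Hirzebruch–Jung resolution of four $A_1$ points, which is the $I_0^*$ configuration. The main obstacle I anticipate is making the reduction from "element of $G_x$ composed of a translation and a linear part" to "conjugate to a purely linear automorphism" completely clean when $G_x$ is non-cyclic or when translations and the involution coexist in $G_x$: one has to check that the subgroup of translations in $G_x$ is normal, that the quotient embeds in $\Aut(E,0)$, and that a lift of the involution can be chosen (by translating) to be the genuine $z\mapsto-z$. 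This is elementary group theory on $E$ together with the structure of $\Aut(E)$ as an extension $0\to E\to\Aut(E)\to\Aut(E,0)\to 0$, but it is the step where care is needed; everything downstream is then a table lookup in Kodaira's classification.
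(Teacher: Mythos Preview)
Your argument for ruling out the cases $n=3,4,6$ has a genuine gap. You write that $z\mapsto\zeta_n z$ with $n>2$ ``requires $E$ to have complex multiplication \ldots\ which is precisely the situation excluded by Assumption \ref{assumptionstandard} via Lemma \ref{lem:char}''. But this is backwards: Lemma \ref{lem:char} is the implication \emph{(stabilisers act by translation or involution) $\Rightarrow$ (standard)}, and its ``in particular'' clause is \emph{(no CM) $\Rightarrow$ (standard)}. Assumption \ref{assumptionstandard} only assumes that $f$ is standard; it does \emph{not} exclude complex multiplication, and you cannot deduce ``no CM'' from ``standard''. Your fallback --- ``for a general elliptic curve'' --- is not a proof either, since the lemma must hold for every $E$ once $f$ is standard. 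Observing that the resulting fibres are of type $II$, $III$, $IV$ etc.\ and ``cannot be multiple'' is also not a contradiction to anything in the hypotheses.

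What the paper actually does is compute, for each $G_{b,e}\in\{\Z_3,\Z_4,\Z_6\}$, the fibre of the minimal resolution $f_R:R\to B$ over the corresponding point and observe that it is \emph{not relatively minimal}. For instance, when $G_{b,e}=\Z_4$ acts on $E=\C/(\Z\oplus i\Z)$ by $z\mapsto iz$, the quotient has two $\frac{1}{4}(1,1)$-points and one $\frac{1}{2}(1,1)$-point; resolving gives a fibre $4C_0+E_1+E_2+2E_3$ with $E_1^2=E_2^2=-4$, $E_3^2=-2$, and a short computation forces $C_0^2=-1$. Thus $R$ contains a $(-1)$-curve in a fibre, contradicting the standardness assumption $\mu:R\stackrel{\sim}{\to}X$ together with the relative minimality of $f$. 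The $\Z_3$ and $\Z_6$ cases are handled the same way. This is the missing mechanism in your sketch: the contradiction comes from relative minimality of the standard model, not from any restriction on the $j$-invariant of $E$.

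A minor point: the ``main obstacle'' you anticipate --- non-cyclic $G_x$ --- does not occur. The stabiliser of a point under a faithful finite group action on a smooth curve is always cyclic (see e.g.\ \cite[III.7.7, Cor.]{FK80}), so the structural worries about the extension $0\to E\to\mathrm{Aut}(E)\to\mathrm{Aut}(E,0)\to 0$ are largely unnecessary here; the real work is the explicit fibre computation above.
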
 

\begin{proof}  We follow 
the description of the singularities of $(C \times E)/G$ in
\cite[2.0.2]{Ser96}: the group $G$ acts on $C \times E$,
let $b \in B$ a point with non-trivial stabiliser $G_b \subset G$.
The group $G_b$ is cyclic \cite[III.7.7,Cor.]{FK80}. If it acts freely  on $E$, the quotient
$(B' \times E)/G$ is smooth near the fibre which is multiple elliptic. 
Assume now that there exists a point $e \in E$ with non-trivial stabiliser
$G_{b,e} \subset G_b$. Then $G_{b,e} \subset Aut(E, e)$. In view of \cite[IV, Cor.4.7]{Har77} this strongly limits the possibilities:

If $G_{b,e} = \Z_2$, the group acts as $z \mapsto -z$ on $E$ and we are done. 

 If $G_{b,e} = \Z_4$, the group acts as $z \mapsto i z$ on the elliptic curve
 $\C/\Z \oplus i \Z$. This action fixes the origin and $\frac{1+i}{2}$, so
 we obtain two singularities of type $A_{1,4}$ in the quotient.
 The points $\frac{1}{2}$ and $\frac{i}{2}$ have non-trivial stabiliser, but are in the same orbit, so we obtain one singular point of type $A_{1,2}$ in the quotient.
The minimal resolution of  $R \rightarrow (C \times E)/G$ 
thus yields a fibre with a central component of multiplicity $4$, two components
of multiplicity $1$ and self-intersection $-4$ and one component 
of multiplicity $2$ and self-intersection $-2$. This
fibre is the log-resolution of a fibre of Kodaira's type $III$, but it is not relatively minimal.

If finally $G_{b,e} = \Z_6$, then we see in a similar fashion that the action of $\zeta$ on the elliptic curve
 $\C/\Z \oplus \zeta \Z$ (with $\zeta=e^{i \frac{2\pi}{3}}$) leads to a log resolution of the configuration 
 obtained for type $IV$ and the action of $-\zeta$ leads to a log resolution of the configuration 
 obtained for type $II$. The contradiction is then as before.

\end{proof} 

\begin{lemma} \label{lem:char} 
Let $f: X \to B$ be a minimal isotrivial elliptic fibration as in Notation \ref{subsectionisotrivialnotation}.
Assume that if $x \in C$ is a point with non-trivial stabiliser
$G_x$ on $C$,  then $G_x$ acts either by translation or as the involution
$z \mapsto -z$ on the elliptic curve $x \times E$. Then $f: X \to B$ is standard. 
In particular, if $E$ does not have complex multiplication, then $f: X \to B$ is standard. 

\end{lemma}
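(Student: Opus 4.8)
The plan is to show that under the stated hypothesis the standard model $f_R\colon R\to B$ is already relatively minimal. Granting this, $R$ and $X$ are both smooth relatively minimal elliptic surfaces over $B$ and $\mu\colon R\to X$ is a birational morphism over $B$; since a birational morphism of smooth surfaces is a composition of blow-downs of $(-1)$-curves and every curve contracted by $\mu$ lies in a fibre of $f_R$, relative minimality of $R$ forces $\mu$ to be an isomorphism — that is, $f$ is standard (this also matches the uniqueness of the relatively minimal model invoked in Notation \ref{subsectionisotrivialnotation}). So it suffices to check that no fibre of $f_R$ contains a $(-1)$-curve. Over a point $b\in B$ lying below a point $x\in C$ with trivial stabiliser, the fibre of $f_R$ is smooth and isomorphic to $E$; hence only the points $x$ with non-trivial (necessarily cyclic) stabiliser $G_x$ need to be examined.

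For those points I would reuse the local description of the singularities of $(C\times E)/G$ from \cite[2.0.2]{Ser96}, in the same spirit as in the proof of Lemma \ref{lem:fibers}, treating the two cases permitted by the hypothesis. In both cases the subgroup $N\subset G_x$ acting trivially on $E$ acts only in the $C$-direction (as $G_x$ acts faithfully on $C$), so passing to $(C\times E)/N$ produces a smooth surface, and it then remains to analyse the action of the residual cyclic group $G_x/N$, which embeds into $\mathrm{Aut}(E)$. If $G_x$ acts on $\{x\}\times E$ by translation, then $G_x/N$ acts freely — a rotation in the $C$-direction together with a translation of the same order on $E$ — so $(C\times E)/G$ is smooth near the fibre over $b$, and that fibre is a multiple of a smooth elliptic curve, hence relatively minimal. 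If $G_x$ acts on $\{x\}\times E$ as the involution $z\mapsto -z$, then $G_x/N\cong\Z_2$ acts as $\pm\mathrm{id}$ on the two remaining local coordinates at the finitely many points over the $2$-torsion of $E$, so $(C\times E)/G$ has only $A_1$-singularities near that fibre, and resolving them yields a fibre of Kodaira type $I_0^*$, again relatively minimal. (Alternatively one may simply quote Serrano's classification of the fibres of the standard model, \cite[Thm.2.1]{Ser96}.) Thus in both cases the fibre of $f_R$ over $b$ contains no $(-1)$-curve, so $f_R$ is relatively minimal and $f$ is standard.

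Finally, for the last assertion: if $E$ has no complex multiplication then $j(E)\notin\{0,1728\}$, so $\mathrm{Aut}(E)=E\rtimes\{\pm\mathrm{id}\}$ (cf.\ \cite[IV, Cor.4.7]{Har77}). Hence any automorphism of $E$ occurring as the action of an element of $G_x$ is either a fixed-point-free translation or, having a fixed point, conjugate — after translating the origin of the fibre $\{x\}\times E$ — to the involution $z\mapsto -z$; as $G_x$ is cyclic it therefore acts on $\{x\}\times E$ either by translations or through that involution, so the hypothesis of the first part is satisfied for every $x$ with non-trivial stabiliser and we conclude. I expect the only genuine work to be the two local cyclic-quotient computations above; these are elementary and are in effect already contained in the proof of Lemma \ref{lem:fibers}, so the remainder is just reading off the fibre types from Serrano's description of standard isotrivial fibrations.
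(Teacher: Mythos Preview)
Your proposal is correct and follows essentially the same route as the paper: identify the singular fibres of the standard model $f_R$ as multiple elliptic curves or of type $I_0^*$ under the hypothesis, observe that such fibres are already relatively minimal, and conclude that $\mu$ is an isomorphism. The paper's proof compresses all of this into two sentences (appealing to Kodaira's classification for the last step), whereas you spell out the local cyclic-quotient analysis and, in addition, supply the argument for the ``in particular'' clause about no complex multiplication that the paper leaves implicit.
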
 

\begin{proof}  Under our assumptions, the singular fibers of $\overline{p}_C \circ \lambda = f \circ \mu$ are either multiple elliptic curves or of type $I_0^*$. 
But then $\mu$ must be an isomorphism, due to Kodaira's classification, applied to $f$. 
\end{proof} 


We have the following crucial

\begin{lemma} \label{lemmanumberZ}
Using the Notation \ref{subsectionisotrivialnotation},
let $Z$ denote the set of points in $x \in C$ such that $G_x$ 
acts as the involution $z \mapsto -z$ on the elliptic curve $x \times E$.

If $f^* \Omega_B(D)$ is not pseudoeffective, then $Z$ has at least $2g(C)-1$ elements.
\end{lemma}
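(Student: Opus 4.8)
The plan is to translate the hypothesis "$f^*\Omega_B(D)$ is not pseudoeffective" into an inequality among the numerical invariants of $X$, using the structure of the isotrivial fibration. First I would recall from Setup \ref{setup} that on $B = \PP^1$ (the genus $\geq 1$ case being trivial by Proposition \ref{propA}) one has $f^*\Omega_B(D) \simeq \sO_X(D - 2F)$, whose Zariski decomposition has nef part $\sum (m_i-1)F_i \equiv \lambda F$ with $\lambda = \sum (1 - 1/m_i)$ and negative part $D_0$. As in the proof of Proposition \ref{propA}, $f^*\Omega_B(D)$ is pseudoeffective exactly when $\lambda \geq 2$; so the hypothesis forces $\lambda < 2$, i.e. $\sum_{i=1}^s (1 - 1/m_i) < 2$.

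Next I would pass to the quotient description $X = (C\times E)/G$ (using Assumption \ref{assumptionstandard} / Lemma \ref{lem:char}, or arguing on the standard model and pulling back via $\mu$, which changes nothing numerically here). The ramified cover $C \to C/G = B = \PP^1$ has, by Lemma \ref{lem:fibers}, ramification only over two kinds of points: those where $G_x$ acts by translation on $E$ — these contribute the multiple fibres $F_i$ with multiplicity $m_i = |G_x|$ — and those where $G_x$ acts as $z\mapsto -z$ — these are the $2$-torsion points giving $I_0^*$ fibres, and the set $C$-preimages of such base points is exactly the set $Z$ of the statement (here I am reading the notation $Z$ as: points $x \in C$ whose stabiliser acts by the involution). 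Applying Riemann–Hurwitz to $C \to \PP^1$ (of degree $|G|$) and grouping the ramification by $G$-orbit, one gets
$$
2g(C) - 2 = |G|\Bigl(-2 + \sum_{i}\bigl(1 - \tfrac{1}{m_i}\bigr) + \sum_{j}\bigl(1 - \tfrac{1}{2}\bigr)\Bigr),
$$
where the second sum runs over the $G$-orbits of involution-points and has $r$ terms, $r$ being the number of $I_0^*$ fibres; note $|Z| = |G| \cdot r / \tfrac{|G|}{2} \cdot \tfrac12$... more precisely each such orbit has $|G|/2$ points (stabiliser of order $2$), so $|Z| = \tfrac{|G|}{2}\, r$. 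Substituting $\lambda = \sum_i(1-1/m_i)$ gives $2g(C)-2 = |G|(\lambda - 2) + |G|\cdot \tfrac{r}{2}$.

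Finally I would combine the two ingredients. From $\lambda < 2$ we get $|G|(\lambda - 2) < 0$, hence $|G|\cdot \tfrac{r}{2} > 2g(C) - 2$, i.e. $|G|\, r > 4g(C) - 4$, so $|Z| = \tfrac{|G|r}{2} > 2g(C) - 2$, whence $|Z| \geq 2g(C) - 1$. The main obstacle I anticipate is bookkeeping: being careful that the Riemann–Hurwitz computation correctly accounts for orbit sizes and stabiliser orders, that $\lambda$ as defined via the Zariski decomposition matches $\sum(1 - 1/m_i)$ exactly (which Setup \ref{setup} asserts), and — if one does not invoke Assumption \ref{assumptionstandard} — that passing through the standard model $\mu: R \to X$ does not alter $g(C)$, the multiplicities $m_i$, or the count of involution-points; the point is that $g(C)$, $|G|$ and the ramification data live on the cover $C\times E \to (C\times E)/G$, which is unchanged. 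Everything else is a short arithmetic deduction from the strict inequality $\lambda < 2$.
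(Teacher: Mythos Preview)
Your proof is correct and follows essentially the same route as the paper: reduce to $B=\PP^1$, apply Riemann--Hurwitz to the Galois cover $C\to C/G\simeq\PP^1$, split the ramification into translation-type (producing the multiple fibres of multiplicity $m_i$) and involution-type (producing $Z$), and use that non-pseudoeffectivity of $f^*\Omega_B(D)$ forces $\sum(1-1/m_i)<2$, hence $\deg R_t<2|G|$ and $|Z|>2g(C)-2$. The paper writes the bookkeeping as $\#(Z)=2g(C)-2+2d-\deg R_t$ with $\deg R_t=d\sum(1-1/m_y)$, which is exactly your identity $2g(C)-2=|G|(\lambda-2)+|G|r/2$ with $|Z|=|G|r/2$.
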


\begin{proof}
Since $f^* \Omega_B(D)$ is not pseudoeffective, we have $B \simeq \PP^1$.

The action of $G$ on the curve $C$ defines a Galois cover $\holom{\psi}{C}{C/G \simeq \PP^1}$ of degree $d=|G|$. By the Hurwitz formula we have
$$
2 g(C)-2 = \deg K_C = - 2 d + \deg R,
$$
where $R$ is the ramification divisor of $\psi$. If $x \in Z$, then it is a point with stabiliser $G_x \simeq \Z_2$, hence $\psi$
ramifies with order $2$ in $x$. Thus $\deg R_x=1$ and
$$
\# (Z) = 2 g(C)-2 + 2d - \deg R_t,
$$ 
where $R_t \subset R$ is the ramification corresponding to points $y \in C$ such that $G_y \simeq \Z_{m_y}$ acts by translation on $y \times E$ (see Lemma \ref{lem:fibers}). If $y \in C$ is a point with this property, every point in its orbit $G.y$ has the same property. Since the orbit has length
$\frac{d}{m_y}$, this induces a ramification divisor of order $d \frac{m_y-1}{m_y}$. Now note that the corresponding $f$- fibre over $\bar y=\psi(y)$ is multiple
elliptic with multiplicity $m_y$, so it defines a component of $D$ that is numerically equivalent to $\frac{m_y-1}{m_y} F$ where $F$ is a general fibre of $f$. Thus we see that $f^* \Omega_{\PP^1}(D)$ is pseudoeffective if 
$$
\sum_{\bar y \in \PP^1, G_y \mbox{\small acts by translation}}  \frac{m_y-1}{m_y}  \geq 2.
$$
Suppose now that this is not the case: then we have 
$$
\deg R_t = \sum_{y \in C, G_y \mbox{\small acts by translation}}  d  \frac{m_y-1}{m_y} < 2d,
$$
hence $\# (Z) = 2 g(C)-2 + 2d - \deg R_t>2g(C)-2$.
\end{proof}

\begin{construction}  \label{construction} 

We again use the Notation \ref{subsectionisotrivialnotation}.
Let $\bar A$ be an ample Cartier divisor on $(C \times E)/G$, 
and set $A_X = \lambda^* \bar A$.
Then by
definition (and \cite[Lemma 2.2]{HLS20}) the vector bundle 
$\Omega_X$ is not pseudoeffective if and only if there exists 
a $c>0$ such that for all $i,j \in \N$ such that $i > cj$ one has
$$
H^0(X, S^{i} \Omega_X \otimes \sO_X(j A_X)) = 0.
$$
Denote by $D \subset X$ the exceptional locus of $\lambda$, then we have morphisms
$$
\xymatrix @R=0.5cm {
H^0(X, S^{i} \Omega_X \otimes \sO_X(j A_X)) 
\ar @{^{(}->}[d]
\\
H^0(X \setminus D, S^{i} \Omega_X \otimes \sO_X(j A_X)) 
\ar[d]^{\simeq}
\\
H^0((C \times E)/G \setminus \lambda(D), S^{i} \Omega_{(C \times E)/G} \otimes \sO_{(C \times E)/G}(j \bar A)) 
\ar @{^{(}->}[d]
\\
H^0(C \times E \setminus \fibre{q}{\lambda(D)}, S^{i} \Omega_{C \times E} \otimes 
\sO_{C \times E}(j q^* \bar A)) 
\ar[d]^{\simeq}
\\
H^0(C \times E, S^{i} \Omega_{C \times E} \otimes 
\sO_{C \times E}(j q^* \bar A)) 
}
$$
where the last isomorphism is due to the fact that $S^i \Omega_{C \times E}$
is reflexive, since $C \times E$ is smooth. 

Finally let $A_C$ be an ample divisor on $C$ and $A_E$ 
an ample divisor of degree one on $E$. Set
$$ 
A := p_C^*(A_C) \otimes p_E^*(A_E) 
$$ 
with the canonical projections $p_C: C \times E \to C$ and $p_E: C \times E \to E$. 
 Then for some $l \in \N$ sufficiently high we have an inclusion 
 $$\sO_{C \times E}(q^* \bar A) \hookrightarrow 
\sO_{C \times E}(l A),$$ 
so for all $i,j \in \N$ we obtain an inclusion
\begin{equation} \label{includesymmetric}
\Phi = \Phi_{i,j,l}:  H^0(X, S^{i} \Omega_X \otimes \sO_X(j A_X)) 
\hookrightarrow
H^0(C \times E, S^{i} \Omega_{C \times E} \otimes 
\sO_{C \times E}(jl A)). 
\end{equation}

\end{construction}

\begin{definition} \label{definitioninduce}
We say that a $\eta \in H^0(C \times E, S^{i} \Omega_{C \times E} \otimes 
\sO_{C \times E}(jA))$ induces a holomorphic symmetric differential 
with values in $A_X$ on $X$ if it is in the image of  an inclusion $\Phi$ in  \eqref{includesymmetric}.
\end{definition}

\begin{remarks*}
The definition is a slight abuse of terminology, since the inclusion $\Phi$ in  \eqref{includesymmetric} is only defined for $j$ divisible by $l$. Since in the definition of pseudoeffectivity we can always replace $i$ and $j$ by $il$ and $jl$,
we will, for the simplicity of notation, ignore this point.

Note also that the chain of inclusions does not use that $C$ is proper, so the terminology also applies to an analytic open subset $\Delta \times E \subset C \times E$ with some subgroup $G_x \subset G$ acting on $\Delta \times E$. 
\end{remarks*}

\subsection{The nonvanishing conjecture for standard isotrivial fibrations} 

The goal of this subsection is to prove the following

\begin{theorem} \label{theoremisotrivialstandard}
Let $X$ be a smooth projective surface that admits 
relatively minimal, isotrivial elliptic fibration
$$
f : X \rightarrow B
$$
that is {\em standard} (see Subsection \ref{subsectionisotrivialnotation}).
If $f^* \Omega_B(D)$ is not pseudoeffective, then $\Omega_X$ is not pseudoeffective.
\end{theorem}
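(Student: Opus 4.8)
The plan is to reduce, via Construction \ref{construction}, the non\-pseudoeffectivity of $\Omega_X$ to a concrete vanishing statement for symmetric differentials on $C\times E$, and then to rule out those candidate differentials by playing the local obstruction at the fixed points of the group action against the lower bound on $\#Z$ furnished by Lemma \ref{lemmanumberZ}. Concretely, by Construction \ref{construction} (together with \cite[Lemma 2.2]{HLS20}) it suffices to produce a constant $c>0$ with $H^0(X, S^i\Omega_X\otimes\sO_X(jA_X))=0$ for all $i>cj$. Assume not and fix $0\ne\eta\in H^0(X, S^i\Omega_X\otimes\sO_X(jA_X))$ with $i>cj$. Applying the inclusion $\Phi$ of \eqref{includesymmetric} we obtain a non-zero $G$-invariant section $\tilde\eta\in H^0(C\times E, S^i\Omega_{C\times E}\otimes\sO_{C\times E}(jlA))$; since $\eta$ is a symmetric differential defined on all of $X=R$, near each point of $\lambda(D)\subset(C\times E)/G$ the section $\tilde\eta$ is the pull-back of a section that extends holomorphically across the exceptional curve of $\lambda$.

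\textbf{Product decomposition.} Using $\Omega_{C\times E}=p_C^*\Omega_C\oplus p_E^*\Omega_E$, $\Omega_E\simeq\sO_E$, and $A=p_C^*A_C\otimes p_E^*A_E$ with $\deg A_E=1$, the K\"unneth formula gives
\[
\tilde\eta=\sum_{k=0}^{i}\tilde\eta_k,\qquad \tilde\eta_k\in H^0\bigl(C,\Omega_C^{\otimes k}\otimes A_C^{\otimes jl}\bigr)\otimes H^0\bigl(E,A_E^{\otimes jl}\bigr),
\]
the decomposition being taken according to the horizontal/vertical splitting. Since that splitting is $G$-equivariant, each $\tilde\eta_k$ is again $G$-invariant. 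Write $\tilde\eta_k=\sum_\mu\phi_\mu\otimes\psi_\mu$ with the $\psi_\mu$ linearly independent in $H^0(E,A_E^{\otimes jl})$, a space of dimension $jl$.

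\textbf{The local obstruction at $Z$.} By Lemma \ref{lemmanumberZ} the set $Z$ has at least $2g(C)-1$ elements, and for $x\in Z$ the stabiliser $G_x\simeq\Z_2$ acts on a neighbourhood of $\{x\}\times E$ by $(t,z)\mapsto(-t,-z)$, so that $(C\times E)/G$ carries an $A_1$-singularity at each of the four points $\{x\}\times E[2]$. In local coordinates $(t,w)$ in which $G_x$ acts by $(t,w)\mapsto(-t,-w)$ the piece $\tilde\eta_k$ has the shape $h_k(t,w)\,(dt)^k(dw)^{i-k}$ times a local frame of the twist, and by \cite[Prop.3.2]{BTV19} the requirement that $\tilde\eta$ descend to a holomorphic section of $S^i\Omega_X$ near the corresponding exceptional fibre forces every $h_k$ to vanish to order at least $i$ at each of these four points. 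For $p<i$, restricting the $p$-th $t$-jet of $\tilde\eta_k$ to the fibre $\{x\}\times E$ then produces a section of $A_E^{\otimes jl}$ vanishing to order $\ge i-p$ at all four $2$-torsion points of $E$; as $\deg A_E^{\otimes jl}=jl$, such a section is zero whenever $4(i-p)>jl$. Unwinding $\tilde\eta_k=\sum_\mu\phi_\mu\otimes\psi_\mu$ and using the linear independence of the $\psi_\mu$, this forces each $\phi_\mu$ to vanish to order at least $i-\lceil jl/4\rceil$ at $x$, for every $x\in Z$.

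\textbf{Degree count and conclusion.} Since $k\le i$, the line bundle $\Omega_C^{\otimes k}\otimes A_C^{\otimes jl}$ has degree at most $i(2g(C)-2)+jl\deg A_C$, whereas a non-zero $\phi_\mu$ now vanishes along an effective divisor of degree at least $(i-\lceil jl/4\rceil)(2g(C)-1)$. Comparing these forces $i\le jl\deg A_C+(\tfrac{jl}{4}+1)(2g(C)-1)$, so choosing $c$ strictly larger than $l\deg A_C+\tfrac14 l(2g(C)-1)+2(2g(C)-1)$ yields a contradiction for $i>cj$: every $\phi_\mu$, hence every $\tilde\eta_k$, hence $\tilde\eta$ and $\eta$, must vanish. (For $g(C)=1$ one has $\Omega_C\simeq\sO_C$ and a single point of $Z$ already suffices; for $g(C)=0$ the piece $\tilde\eta_k$ vanishes once $k>\tfrac12 jl\deg A_C$ because $\Omega_C^{\otimes k}\otimes A_C^{\otimes jl}$ then has negative degree, so either $Z\ne\emptyset$ and one point of $Z$ closes the argument, or $Z=\emptyset$ and $f$ is almost smooth --- a case incompatible with $\kappa(X)=1$ together with $f^*\Omega_B(D)$ non-pseudoeffective, cf.\ the opening of Section \ref{sectionelliptic}.) The main obstacle is the local statement: extracting from \cite[Prop.3.2]{BTV19} the precise order-of-vanishing conditions that characterise which $\Z_2$-invariant symmetric differentials extend across the minimal resolution of an $A_1$-singularity --- in particular verifying that cross-terms between different pieces $\tilde\eta_k$ cannot conspire to cancel and that the twist by $\sO_X(jA_X)$ contributes nothing to the obstruction --- and then applying it simultaneously along the $2g(C)-1$ fibres coming from $Z$.
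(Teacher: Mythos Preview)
Your overall strategy—reduce to $C\times E$ via Construction \ref{construction}, impose a local obstruction at the fixed points over $Z$, and finish with a degree count on $C$ using $\#Z\geq 2g(C)-1$—is exactly the paper's plan. The gap is in the local obstruction. You assert that \cite[Prop.3.2]{BTV19} forces each coefficient $h_k$ to vanish to order at least $i$ at every fixed point, and you flag the possibility of cancellation between the pieces $\tilde\eta_k$ as a technicality to be verified. In fact this verification fails: the cross-terms \emph{do} cancel, and the correct obstruction is strictly weaker. The paper says so explicitly right after the Sakai example: once the twist is present (so that $h^0(E,\sO_E(jA_E))>1$), the differential need not vanish to order $i$ at a fixed point. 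Concretely, by the Lemma preceding Corollary \ref{corollarylocalsymmetric}, the piece $\omega_n$ of total vanishing order $n$ in an extendable $\omega$ has the form $\eta_n\cdot M^{(i-n)/2}$ with $M=z_1\,dz_2 - s_{1,1}\,dz_1$, and such an $\omega_n$ can be nonzero for any $n\geq i-2j$. Expanding $M^{(i-n)/2}$ spreads $\omega_n$ across all bidegrees $(dt)^k(dw)^{i-k}$, so each $h_k$ inherits only order $n\geq i-2j$, not $i$; the poles of the individual bidegree pieces on the resolution cancel against each other precisely because of the $M$-factor.

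Because the available bound is only $i-2j$ (and it is a bound on the \emph{total} order at a single point $(x,x')\in C\times E$, not on each $h_k$ or along the $C$-direction), the paper cannot finish by your direct degree count on $C$. Instead it fixes one fixed point $(x,x')$ per $x\in Z$, uses $n\geq i-2j\geq(\tfrac{2g-2}{2g-1}+\epsilon)i$ for $i\gg j$, passes to an $N$-th power, and reduces the question to showing that the torsion-free sheaf
\[
\bigotimes_{x\in Z} I_{(x,x')}^{(\frac{2g-2}{2g-1}+\epsilon)N}\otimes p_C^*\omega_C^{N}
\]
on $C\times E$ is not strongly pseudoeffective. That step is where the machinery of Section~3 (Corollary \ref{corollarypsefftautological} together with Lemma \ref{lemS1} applied to the elliptic fibration $S\to C$) enters, terminating in a degree comparison on $C$ via $\#Z\geq 2g-1$. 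So the strong-pseudoeffectivity apparatus is not an optional detour; it is the substitute for the vanishing-order-$i$ claim that your argument needs but which is unavailable.
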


By Proposition \ref{propA} and \ref{propcover2} we thus obtain:

\begin{corollary} \label{cor:isotrivialstandard} 
Let $f: X \to B$ be a standard isotrivial fibration; e.g., $f$ is an isotrivial elliptic fibration such that the general fiber does not have complex multiplication. 
Then the following are equivalent:
\begin{enumerate}
\item $\Omega^{1}_X$ is  pseudoeffective ;
\item we have $\widetilde q(X) > 0$ ;
\item we have $H^0(X,S^{m}\Omega^1_X)) \ne 0$ for some positive integer $m$
\end{enumerate}
\end{corollary}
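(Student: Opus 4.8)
The plan is to prove the three conditions equivalent in the cyclic order (2)~$\Rightarrow$~(3)~$\Rightarrow$~(1)~$\Rightarrow$~(2). The first two implications are formal and use only material already established; all the substance sits in (1)~$\Rightarrow$~(2), which is exactly where Theorem~\ref{theoremisotrivialstandard} enters.

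For (2)~$\Rightarrow$~(3) I would choose a quasi-\'etale cover $\widetilde X \to X$ realising $q(\widetilde X) = \widetilde q(X) > 0$, so that $H^0(\widetilde X, \Omega^{[1]}_{\widetilde X}) \ne 0$ and hence $\kappa_1(\widetilde X) \ge 0$; Proposition~\ref{propcover2} then gives $\kappa_1(X) = \kappa_1(\widetilde X) \ge 0$, which, $X$ being smooth, is precisely the statement that $H^0(X, S^m\Omega_X^1) \ne 0$ for some $m \in \N$. For (3)~$\Rightarrow$~(1), a nonzero element of $H^0(X, S^m\Omega_X^1) = H^0(\PP(\Omega_X), \sO_{\PP(\Omega_X)}(m\zeta))$ shows that $\zeta$ is effective, hence pseudoeffective, so $\Omega_X$ is pseudoeffective by Definition~\ref{definitionpseffvb}.

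The heart of the argument is (1)~$\Rightarrow$~(2). Assuming $\Omega_X$ pseudoeffective, I would consider the canonical extension \eqref{sequencemain}
$$
0 \to f^*\Omega_B(D) \to \Omega_X \to \sI_Z \otimes \omega_{X/B}(-D) \to 0
$$
and split into two cases according to the subsheaf $f^*\Omega_B(D)$. If $f^*\Omega_B(D)$ is pseudoeffective, Proposition~\ref{propA} gives $\widetilde q(X) > 0$ directly. If $f^*\Omega_B(D)$ is \emph{not} pseudoeffective, then --- using precisely the hypothesis that $f$ is relatively minimal, isotrivial and standard --- Theorem~\ref{theoremisotrivialstandard} forces $\Omega_X$ to fail to be pseudoeffective, contradicting the assumption. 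So only the first alternative can occur, and (2) holds. For the ``e.g.'' clause I would invoke Lemma~\ref{lem:char}: if the general fiber has no complex multiplication then $f$ is standard, so the equivalence applies.

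In effect the only real obstacle has been quarantined inside Theorem~\ref{theoremisotrivialstandard}; granting it, this corollary is immediate. The genuinely hard work --- the explicit description of symmetric differentials pulled back to $C \times E$, the Hurwitz-type count of Lemma~\ref{lemmanumberZ}, and the control of the local obstruction at the fixed points of the $G$-action --- all lives in the proof of that theorem rather than here.
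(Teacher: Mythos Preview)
Your proof is correct and follows exactly the approach the paper intends: the corollary is stated immediately after Theorem~\ref{theoremisotrivialstandard} with the one-line justification ``By Proposition~\ref{propA} and~\ref{propcover2} we thus obtain'', and you have simply unpacked that sentence into the cyclic chain (2)~$\Rightarrow$~(3)~$\Rightarrow$~(1)~$\Rightarrow$~(2) with the expected references. The only content beyond those two propositions is Theorem~\ref{theoremisotrivialstandard} itself (for (1)~$\Rightarrow$~(2)) and Lemma~\ref{lem:char} (for the ``e.g.'' clause), both of which you invoke correctly.
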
 

The proof of Theorem  \ref{theoremisotrivialstandard} is done by showing that if $i \gg j$, a non-zero 
$$
\eta \in H^0(C \times E, S^{i} \Omega_{C \times E} \otimes 
\sO_{C \times E}(jA))
$$ 
does not induce a holomorphic symmetric differential on $X$. In other words, $\Phi = 0$. Since the details are somewhat technical, let us first recall and reprove a result of Sakai.

\begin{example} \cite[\S 4, (D)]{Sak79}
Let $C$ be a hyperelliptic curve, and let $\tau = (i_C, i_E)$ be the involution
on $C \times E$ defined by the hyperelliptic involution $i_C$ on $C$ 
and the map $i_E : E \rightarrow E, z \mapsto -z$ on an elliptic curve $E$. 
The minimal resolution $X \rightarrow (C \times E)/\langle \tau \rangle$,
has an isotrivial fibration $f: X \rightarrow \PP^1$ such that all the singular fibres are of type $I_0^*$, in particular it is relatively minimal and standard.
Then one has
$$
H^0(X, S^i \Omega_X)=0 \qquad \forall i \in \N.
$$
{\em Proof:} Let 
$$
p_C^* \alpha \otimes p_E^* \beta \in H^0(C \times E, S^i \Omega_{C \times E})
= \bigoplus_{l=0}^i H^0(C \times E, p_C^* \omega_C^{\otimes l} \otimes p_E^* \omega_E^{\otimes i-l})
$$
be a rank one tensor, i.e. $\alpha \in H^0(C, \omega_C^{\otimes l})$
and $\beta \in H^0(E, \omega_E^{\otimes i-l})$ for some $l \in {0, \ldots, i}$.

Assume that $\alpha \otimes \beta$ induces a holomorphic symmetric differential
on $X$. Arguing by contradiction we assume that $\alpha \neq 0$ and $\beta \neq 0$.

If $(x,0)$ is a fixed point of $\tau$, choose local coordinates
$z_1$ on $C$ and $z_2$ on $E$ such that locally near $(x,0)$, the 
involution is given by $(z_1, z_2) \mapsto (-z_1, -z_2)$.
In these local coordinates we write $\alpha = f_\alpha(z_1) d z_1^l$
and $\beta = f_\beta(z_2) d z_2^{i-l}$. For a general point in the exceptional divisor over the point $\overline{(x,0)}$, we can choose local coordinates $(u, v)$ on $X$ such that
$z_1=u^2, z_2 = uv$. In these coordinates the exceptional divisor is given by $u=0$. Substituting $(z_1, z_2)$ by these coordinates we see that
$\alpha \otimes \beta$ induces the meromorphic symmetric differential
$$
f_\alpha(\sqrt{u}) \cdot  f_\beta(\sqrt{u}) \cdot
du^l \frac{(u dv+ v d u)^{i-l}}{(2 \sqrt{u})^i}.
$$
on $X$. 
Looking at the term of $du^i$ we see that the differential is holomorphic along the exceptional divisor if and only if 
$$
\frac{f_\alpha(\sqrt{u}) \cdot  f_\beta(\sqrt{u})}{(\sqrt{u})^i}
$$
is holomorphic, i.e. if and only if $\alpha \otimes \beta$ vanishes with
order at least $i$ in the fixed point. Since $0 \neq \beta \in  H^0(E, \omega_E^{\otimes i-l}) \simeq
H^0(E, \sO_E)$ does not vanish, this shows that $\alpha \in H^0(C, \omega_C^{\otimes l})$
vanishes with order $i$ in $x$. 
Since the involution $i_C$ has $2g(C)+2$ fixed points, we obtain
that $\alpha$ vanishes along a divisor of degree at least $i \cdot (2g(C)+2)$.
Since $\deg \omega_C^{\otimes l} = l \cdot (2g(C)-2) < i \cdot (2g(C)+2)$
we obtain that $\alpha = 0$.
Since $H^0(E, \omega_E^{\otimes i-l})$ has dimension one for every $i-l$ one can reduce the general case to rank one tensors, so the statement follows.
 This settles the proof of the example. 
\end{example}

In the proof of Theorem \ref{theoremisotrivialstandard} we have
$h^0(E, \omega_E^{\otimes i-l} \otimes \sO_E(j A_E))>1$, so the symmetric differentials are not global rank one tensors. Somewhat surprisingly, this leads to a much weaker local obstruction (cf. \cite[Prop.3.2]{BTV19}), i.e., the vanishing order 
of the symmetric differential in a fixed point can be strictly smaller than $i$.
We will improve this local estimate by taking into account that the vanishing
order along $E$ is bounded by $j \deg A_E$.

\begin{the local obstruction - setup}  {\rm 
Let us describe the local obstruction for a holomorphic symmetric differential on $C \times E$ to induce a holomorphic symmetric differential on $X$:
using the notation of Lemma \ref{lemmanumberZ}, fix a point $x \in Z \subset C$,
and let $\tau_x$ be the generator of $G_x \simeq \Z_2$. 
Let $0 \in E$ be a fixed point of $\tau_x$, for this local computation we choose $A_E:=0$ to be the corresponding
ample divisor of degree one\footnote{We make this choice in order to simplify the notation. Since the obstruction depends only on a small neighbourhood of $0 \in E$, the statement in Corollary \ref{corollarylocalsymmetric} is independent of this choice.}.

Let $x \in \Delta \subset C$ be a small disc and choose a local coordinate $z_1$ such that the action
of $G_x$ is given by $z_1 \mapsto -z_1$ (in particular $x=0$). We have
$$
H^0(\Delta \times E, \sO_{\Delta \times E}(j A)) \simeq \C\{z_1\} \otimes H^0(E, \sO_E(j A_E)),
$$
where $z_1$ is a local coordinate on $\Delta$ and $\C\{z_1\}$ denotes the algebra of convergent power series in $z_1$. 

Since $E$ is an elliptic curve, we have $h^0(E, \sO_E(j A_E))= j$, and we may choose a basis of sections $s_{j,0}, \ldots, s_{j, j-2}, s_{j,j}$
such that $s_{j,k}$ vanishes with order exactly $k$ in the neutral element $0$ of the elliptic curve. In particular we have a $\C$-basis of $H^0(\Delta \times E, \sO_{\Delta \times E}(j A))$
$$
z_1^{n-k} s_{j, k}  \qquad n \in \N, \ k=0, \ldots, j-2, j.
$$
Note that by construction $z_1^{n-k} s_{j, k}$ vanishes with order exactly $n$ in $(0,0)$.

Since $E$ is an elliptic curve, its cotangent bundle is trivial and we denote by $d z_2$ a global generator of $\Omega_E$. Let now
$$
H^0(\Delta \times E, S^i \Omega_{\Delta \times E} \otimes \sO_{\Delta \times E}(j A))
\simeq
\oplus_{l=0}^i H^0(\Delta \times E, \sO_{\Delta \times E}(j A)) \otimes dz_1^l dz_2^{i-l}
$$
be the space of symmetric differentials with values in $\sO_{\Delta \times E}(j A)$. Then we can decompose
$$
H^0(\Delta \times E, S^i \Omega_{\Delta \times E} \otimes \sO_{\Delta \times E}(j A)) = \oplus_{n \in \N} V_n
$$
where $V_n$ is generated by 
$$
z_1^{n-k} s_{j, k} dz_1^l dz_2^{i-l}  \qquad k=0, \ldots, j-2, j, \qquad l=0 \ldots i.
$$
Let now $\omega \in H^0(\Delta \times E, S^i \Omega_{\Delta \times E} \otimes \sO_{\Delta \times E}(j A))$, and let
$$
\omega = \sum_{n \in \N} \omega_n
$$
be the decomposition such that $\omega_n \in V_n$. 
Finally let 
$$
M := z_1 d z_2 - s_{1,1} d z_1 
$$
be the holomorphic $1$-form with values in $A$ giving in local coordinates the form $z_1 d z_2 - z_2 d z_1$. }

\end{the local obstruction -  setup} 

In the setup just introduced, the following holds :

\begin{lemma}
Let $S$ be the minimal resolution
of $(\Delta \times E)/G_x$. If $\omega$ induces a holomorphic symmetric differential with values in $A_{S}$ on $S$ (see Definition \ref{definitioninduce}),
then for all $n \in \N$ the form $\omega_n$ induces a holomorphic symmetric differential with values in $A_{S}$. Moreover, if $n < i$, there exists a differential 
$$
\eta_n \in H^0(\Delta \times E, S^{\frac{i+n}{2}} \Omega_{\Delta \times E} \otimes 
\sO_{\Delta \times E}((j-\frac{i-n}{2}) A))
$$
such that $\omega_n = \eta_n \times M^{\frac{i-n}{2}}$.
\end{lemma}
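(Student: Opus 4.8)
The two assertions are proved by different means: the first by a symmetry argument that exhibits the decomposition $\bigoplus_n V_n$ as a weight decomposition, the second by an explicit local computation on the minimal resolution, of exactly the type carried out in the Example above.

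\textbf{First assertion (the grading is respected).} Since the whole discussion concerns an arbitrarily small neighbourhood of $0\in E$ (cf. the footnote in the local obstruction setup), we may shrink $E$ to a coordinate disc around $0$. The circle $S^1$ then acts on $\Delta\times E$ by $(z_1,z_2)\mapsto(e^{i\vartheta}z_1,e^{i\vartheta}z_2)$, and since $\tau_x$ is exactly the action of $-1\in S^1$ this action commutes with $G_x$; hence it descends to $(\Delta\times E)/G_x$ and, the minimal resolution being canonical, lifts to $S$. Therefore all the spaces occurring in the chain of maps of Construction \ref{construction} that defines $\Phi$ carry $S^1$-representations and every map in that chain is $S^1$-equivariant, so $\im\Phi$ is a subrepresentation, that is, a direct sum of weight spaces. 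A weight count (once $\sO_{\Delta\times E}(jA)$ is given its natural linearization) shows that the basis vector $z_1^{n-k}s_{j,k}\,dz_1^{l}\,dz_2^{\,i-l}$ has $S^1$-weight depending only on $n$, so $V_n$ lies in a single weight space. Consequently $\omega=\sum_n\omega_n\in\im\Phi$ forces each $\omega_n\in\im\Phi$. (In particular the weight of $V_n$ has the parity of $i+j+n$, so $\tau_x$-invariance already gives $\omega_n=0$ unless $n\equiv i+j\pmod2$, which is the parity implicit in the formula $\eta_n\times M^{(i-n)/2}$.)

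\textbf{Second assertion (divisibility by $M$).} Fix $n<i$ with $\omega_n\neq 0$; by the first part $\omega_n$ is induced, hence its descent to $S$ minus the exceptional locus extends holomorphically across the exceptional divisor $\mathcal E$ lying over $\overline{(0,0)}$. Localising $\Omega_{\Delta\times E}$ at $z_1$, rewrite it in the frame $\{M,dz_1\}$ and expand $\omega_n=\sum_{q=0}^{i}b_q\,M^q\,dz_1^{\,i-q}$, with $b_q$ functions (a priori with poles along $\{z_1=0\}$). In the resolution chart of the Example — where $z_1^2=u$, $z_1z_2=uv$ and $\mathcal E=\{u=0\}$ — one computes that $M$ pulls back to $u\,dv$ (up to a unit and the trivialisation of the twist), that $dz_1$ pulls back to $\tfrac12u^{-1/2}du$, and hence that the $q$-th summand of $\omega_n$ pulls back to a form whose coefficient against $du^{\,i-q}dv^{\,q}$ has $u$-order $q+\tfrac{n-i}{2}$, the trivialisation of $\sO_{\Delta\times E}(jA)$ pulling back to precisely the local generator of $\sO_S(jA_S)$ near $\mathcal E$ (so that it contributes nothing extra). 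Holomorphy on $S$ therefore forces $b_q\equiv 0$ whenever $q+\tfrac{n-i}{2}<0$, i.e. $b_q\equiv 0$ for $q<\tfrac{i-n}{2}$; consequently $\omega_n=M^{(i-n)/2}\eta_n$ with $\eta_n=\sum_{q\ge(i-n)/2}b_q\,M^{\,q-(i-n)/2}\,dz_1^{\,i-q}$. That $\eta_n$ is holomorphic on all of $\Delta\times E$ follows from unique factorisation in the local rings (away from $(0,0)$, $M$ is a non-vanishing $1$-form, so $b_q\equiv0$ for $q<\tfrac{i-n}{2}$ makes the divisibility immediate; at $(0,0)$ one uses that $z_1\nmid M$), and restricting everything to a general fibre $E$ — where $M|_E$ is a degree-one section and $\omega_n|_E$ a degree-$j$ one — bounds the vanishing of $\eta_n$ along the fibres, so that $\eta_n\in H^0\big(\Delta\times E,\,S^{(i+n)/2}\Omega_{\Delta\times E}\otimes\sO_{\Delta\times E}((j-\tfrac{i-n}{2})A)\big)$, as asserted.

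\textbf{Main obstacle.} The delicate point is the second assertion: deducing divisibility by $M^{(i-n)/2}$ from the mere extendability of $\omega_n$ over the exceptional locus of $S$. The substance is the interaction of three things, which only becomes transparent after the local analysis in the resolution chart: the order of vanishing of $\omega_n$ at the fixed point $(0,0)$ (at least $n$, by definition of $V_n$), the ``$\sqrt{}$'' built into the $A_1$-resolution (which turns the defect $i-n$ into $\tfrac{i-n}{2}$), and the bound $j\deg A_E=j$ on the vanishing order of $\omega_n$ along $E$ (needed to control the twist of $\eta_n$, and — as emphasised in the paper — precisely what makes the local obstruction here strictly weaker than the one for rank-one tensors). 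Keeping the twist bookkeeping consistent through the resolution (the divisor $\overline{\{z_2=0\}}$ being only $\Q$-Cartier on the quotient), and, if one insists on the ad hoc normalisation $A_E=\{0\}$ of the local obstruction setup, matching up the contributions of the trivialisation and of the exceptional divisor, is where essentially all the technical care goes.
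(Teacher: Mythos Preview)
The paper's own ``proof'' is a one-sentence citation of \cite[Prop.~3.2]{BTV19}, so there is almost nothing to compare against; your write-up is in fact an attempt to reproduce that reference. The overall architecture --- a homogeneity/weight argument for the first claim, then the explicit computation in the resolution chart $z_1^2=u,\ z_2=z_1v$ for the second --- is exactly the argument of \cite{BTV19} that the paper is invoking, and your second part (divisibility by $M^{(i-n)/2}$, with $M$ pulling back to $u\,dv$) is carried out correctly.

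There is, however, a genuine gap in your first assertion. After you shrink $E$ to a coordinate disc, the global sections $s_{j,k}$ restrict to functions of the form $z_2^k\cdot(\text{unit})$; the unit has higher-order terms, so $s_{j,k}$ is \emph{not} a pure $S^1$-weight vector for the diagonal rotation $(z_1,z_2)\mapsto(e^{i\vartheta}z_1,e^{i\vartheta}z_2)$. Consequently the decomposition $\bigoplus_n V_n$ of the setup (built from the $s_{j,k}$) does \emph{not} coincide with the $S^1$-weight decomposition $\bigoplus_n W_n$ (built from the local monomials $z_1^{n-k}z_2^k$). Your equivariance argument shows that each $W_n$-component of $\omega$ extends, which is what \cite{BTV19} proves in its purely local setting; it does not yet show that each $V_n$-component extends. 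The two gradings are related by an upper-triangular change of basis (since $s_{j,k}=z_2^k+O(z_2^{k+1})$), and one can pass from one to the other, but that step is missing. This is precisely why the paper prefaces its citation with ``the property of being holomorphic is local'': one first passes to the local monomial basis, applies \cite{BTV19}, and then translates back. Your parity observation $n\equiv i+j\pmod 2$ is in fact the correct one once the twist by $\sO(jA)$ is taken into account; the paper's remark $n\equiv i$ is the untwisted statement of \cite{BTV19} and does not affect the subsequent corollary.
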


\begin{remark*}
Since $G_x \simeq \Z_2$ acts locally as $(z_1, z_2) \mapsto (-z_1, -z_2)$,
one has $n = i \mod 2$ \cite[Sect.3]{BTV19}, so
$\frac{i+n}{2}$ and $\frac{i-n}{2}$ are positive integers.
\end{remark*}

\begin{proof}
The property of being holomorphic is local, so we can just apply the proof of \cite[Prop.3.2]{BTV19}.
\end{proof}

Since $H^0(E, \sO_E((j-\frac{i-n}{2}) A_E))=0$ for $j-\frac{i-n}{2}<0$ this implies :

\begin{corollary} \label{corollarylocalsymmetric}
If $\omega \in H^0(\Delta \times E, S^i \Omega_{\Delta \times E} \otimes \sO_{\Delta \times E}(j A))$ 
induces a holomorphic symmetric differential with values in $A_{S}$ on $S$,
then 
$$
\omega \in H^0(\Delta \times E, I_{0,0}^n \otimes S^i \Omega_{\Delta \times E} \otimes \sO_{\Delta \times E}(j A))
$$
where $I_{0,0}$ is the ideal sheaf of the origin and $n \geq i - 2j$. 
\end{corollary}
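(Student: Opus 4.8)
The plan is to read off the corollary directly from the preceding Lemma, combined with the vanishing of the space of sections of a negative line bundle on the elliptic curve $E$. First I would dispose of the trivial case: if $i \leq 2j$, then $i-2j \leq 0$, so $I_{0,0}^{i-2j} = \sO_{\Delta\times E}$ and there is nothing to prove. So assume $i - 2j \geq 1$.

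Next, write $\omega = \sum_{n \in \N} \omega_n$ with $\omega_n \in V_n$ as in the setup. Since $\omega$ induces a holomorphic symmetric differential with values in $A_S$ on $S$, the Lemma gives that each $\omega_n$ induces one as well, and that whenever $n < i$ one has a factorization $\omega_n = \eta_n \times M^{\frac{i-n}{2}}$ with $\eta_n \in H^0(\Delta\times E, S^{\frac{i+n}{2}}\Omega_{\Delta\times E} \otimes \sO_{\Delta\times E}((j-\frac{i-n}{2})A))$. I would then show that $\omega_n = 0$ whenever $n < i - 2j$. Indeed, such an $n$ satisfies $n < i$ (since $j \geq 0$), so the factorization applies; and $n < i-2j$ is equivalent to $j - \frac{i-n}{2} < 0$. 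On $\Delta\times E$ the ample divisor restricts as $\sO_{\Delta\times E}(A) \simeq p_E^*\sO_E(A_E)$, because $A_C$ pulls back trivially to the disc, while $\Omega_{\Delta\times E}$ is a direct sum of trivial line bundles (both $\Omega_\Delta$ and $\Omega_E$ being trivial, as used in the setup); hence $H^0(\Delta\times E, S^{\frac{i+n}{2}}\Omega_{\Delta\times E}\otimes\sO_{\Delta\times E}((j-\frac{i-n}{2})A))$ is a finite sum of copies of $\C\{z_1\}\otimes H^0(E,\sO_E((j-\frac{i-n}{2})A_E))$, which vanishes since $\deg A_E = 1$ forces $\sO_E((j-\frac{i-n}{2})A_E)$ to have negative degree. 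Therefore $\eta_n = 0$, so $\omega_n = 0$.

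It then remains to observe that $\omega = \sum_{n \geq i-2j}\omega_n$, and that each generator $z_1^{n-k}s_{j,k}\,dz_1^l dz_2^{i-l}$ of $V_n$ vanishes to order exactly $n$ at $(0,0)$, so every surviving $\omega_n$ lies in $H^0(\Delta\times E, I_{0,0}^n\otimes S^i\Omega_{\Delta\times E}\otimes\sO_{\Delta\times E}(jA)) \subseteq H^0(\Delta\times E, I_{0,0}^{i-2j}\otimes S^i\Omega_{\Delta\times E}\otimes\sO_{\Delta\times E}(jA))$, which is the assertion. I do not expect a genuine obstacle here: the substantive content, namely the local factorization of $\omega_n$ through the form $M$ vanishing along the exceptional divisor, is already contained in the Lemma (following \cite[Prop.3.2]{BTV19}); what is left is only the identification of the relevant spaces of sections on $\Delta\times E$ and the elementary degree count on $E$. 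The one point that calls for a line of care is the reduction to $i-2j\geq 1$ together with the remark that $n<i-2j$ implies $n<i$, so that the Lemma's factorization is available exactly where it is needed.
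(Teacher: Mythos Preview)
Your proposal is correct and follows exactly the same approach as the paper: the paper's entire proof is the single observation that $H^0(E,\sO_E((j-\tfrac{i-n}{2})A_E))=0$ for $j-\tfrac{i-n}{2}<0$, from which the corollary is declared immediate. You have simply unpacked this implication in detail, and your added remarks (the trivial case $i\leq 2j$, and the check that $n<i-2j$ forces $n<i$ so the Lemma applies) are accurate and helpful.
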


\begin{proof}[Proof of Theorem \ref{theoremisotrivialstandard}]
We use the notation of Subsection \ref{subsectionisotrivialnotation} and Construction \ref{construction}.
Denote by $Z \subset C$ the set of points in $x \in C$ such that the stabiliser 
$G_x = \langle \tau_x \rangle$ 
acts as the involution $z \mapsto -z$ on the elliptic curve $x \times E$.
Since $f$ is standard and $f^* \Omega_B(D)$ is not pseudoeffective, we know by Lemma \ref{lemmanumberZ}
that $Z$ has at least $2g-1$ element where $g=g(C)$.

For every $x \in Z$ we fix a point $x' \in (x \times E)$ such that $(x, x')$ is a fixed point of $\tau_x$.

Fix some rational $\epsilon>0$ such that  $\frac{2g-2}{2g-1}+\epsilon<1$ and 
fix a positive integer $N$ such that 
$$ 
(\frac{2g-2}{2g-1} + \epsilon) N \in \mathbb N.
$$ 

Assume now that 
$$
\eta \in H^0(C \times E, S^{i} \Omega_{C \times E} \otimes \sO_{C \times E}(j A))
$$
induces a holomorphic symmetric differential with values in $A_X$ on $X$ (see Definition \ref{definitioninduce}). 
Then for every $x \in Z$, the restriction to some neighbourhood $\Delta \times E$
induces a holomorphic symmetric differential on the minimal resolution $S$
of $(\Delta \times E)/G_x$. Thus by 
Corollary \ref{corollarylocalsymmetric} we have for every $i \in \N$ that
$$
\eta \in 
H^0(C \times E,
(\otimes_{x \in Z} I_{(x,x')}^n) \otimes S^i \Omega_{C \times E} \otimes \sO_{\Delta \times E}(j A).
)
$$
with $n \geq i - 2j$.

Observe that if $i \geq \frac{2}{1 - \frac{2g-2}{2g-1}-\epsilon} j$   one has
$$
n \geq (\frac{2g-2}{2g-1}+\epsilon) i. 
$$
Thus
we get an inclusion
$$
\eta^N \in 
H^0(C \times E,
(\otimes_{x \in Z} I_{(x,x')}^{(\frac{2g-2}{2g-1}+\epsilon) N i}) \otimes S^{Ni }\Omega_{C \times E} \otimes \sO_{C \times E}(N j A)
).
$$
We claim that this last vector space is zero
for $i \geq \frac{2}{1 - \frac{2g-2}{2g-1}-\epsilon} j$. 
As explained after Theorem \ref{theoremisotrivialstandard}, this shows the statement.

{\em Proof of the claim.}
Recall that
$$
S^i \Omega_{C \times E} = \oplus_{l=0}^i p_C^* \omega_C^l \otimes p_E^*
\omega_E^{i-l}
\simeq \oplus_{l=0}^i p_C^* \omega_C^l
$$
since $\omega_E \simeq \sO_E$. 
Since $\omega_C^l \hookrightarrow \omega_C^i$ for $l<i$ we are reduced to showing that 
$$
H^0(C \times E,
(\otimes_{x \in Z} I_{(x,x')}^{(\frac{2g-2}{2g-1}+\epsilon)Ni}) \otimes p_C^* \omega_C^{Ni} \otimes \sO_{C \times E}(N j A)
)
= 0
$$
for $i \gg j$. 
By Definition \ref{definitionpseff} this is the same as proving that
$$
\otimes_{x \in Z} I_{(x,x')}^{(\frac{2g-2}{2g-1}+\epsilon) N} \otimes p_C^* \omega_C^N
$$
is not strongly pseudoeffective. Arguing by contradiction assume that the sheaf is strongly pseudoeffective.

Let $\holom{\mu}{S}{C \times E}$ be the blowup
of the (reduced) set $\cup_{x \in Z} (x,x') \subset C \times E$
and denote by $\sO_S(1)$ the tautological sheaf, i.e. the ideal sheaf
of the exceptional divisor of $\mu$. 
Recall that $S$ is isomorphic to the blowup of the ideal sheaf  
$\otimes_{x \in Z} I_{(x,x')}^{(\frac{2g-2}{2g-1}+\epsilon) N}$ (e.g. \cite[II,Ex.7.11]{Har77}),
so by Corollary \ref{corollarypsefftautological} the line bundle
$$
\sO_S((\frac{2g-2}{2g-1}+\epsilon) N) \otimes \mu^* p_C^* \omega_C^N
$$
is pseudoeffective. By Lemma \ref{lemS1}, applied to the elliptic fibration $p_C \circ \mu: S \rightarrow C$, there exists $m \in \N$ and a numerically trivial line bundle $M$ on $C$ such that
$$
H^0(S, \sO_S((\frac{2g-2}{2g-1}+\epsilon) mN) \otimes \mu^* p_C^* (\omega_C^{mN} \otimes M^*)) \neq 0.
$$
Thus we have
\begin{equation} \label{nonvanishing}
H^0(C \times E, \otimes_{x \in Z} I_{(x,x')}^{(\frac{2g-2}{2g-1}+\epsilon) mN}
\otimes \mu^* p_C^* (\omega_C^{mN} \otimes M^*)) \neq 0.
\end{equation}

Yet by Lemma \ref{lemmanumberZ} we know that $Z$ has at least $2g-1$ elements,
hence 
$$ 
\deg \bigl( \otimes_{x \in Z} I_{x}^{(\frac{2g-2}{2g-1}+\epsilon) mN} \bigr) \geq mN ( \frac{2g-2}{2g-1} + \epsilon) \cdot 2g-1 > mN(2g-2). 
$$
Since $\deg (\omega_C^{mN} \otimes M^*) = mN(2g-2)$, we obtain 
a contradiction to \eqref{nonvanishing}. 
\end{proof}

\subsection{Isotrivial fibrations and the Zariski decomposition}
\label{subsectionisotrivialzariski}

Let $\holom{f}{X}{\PP^1}$ be an isotrivial elliptic fibration over a rational curve, and assume that $\Omega_X$ is pseudoeffective. 
Since the proof of Theorem \ref{theoremisotrivialstandard} is a bit tedious, we present
here a more conceptual approach based on the ideas of Subsection \ref{subsectionnonisotrivial}. The considerations of this section are independent of whether $f$ is standard or not. 

We use the notations of the setup \ref{setup}. 
Let $\zeta$ be the tautological class of $\PP(\Omega_X)$.
If the elliptic fibration $f$ is not almost smooth, we would like to show
that the subvariety 
$$
Y := \mathbb P(\sI_Z \otimes \omega_{X/B}(-D)) \subset \PP(\Omega_X)
$$ defined by
$f$ is in the negative part of the Zariski decomposition of $\zeta$. If $f$ is isotrivial, the restriction $\PP(\Omega_X|_F)$ over a general fibre $F$ is 
isomorphic to $\PP^1 \times F$, so the proof of Proposition \ref{propositionLpseff} 
does not apply. We therefore have to use some global information
to explicitly compute the restriction $\zeta|_Y$.

\begin{proposition} \label{prop:isotrivial}
In the situation of Setup \ref{setup}, assume that $f$ is isotrivial, relatively minimal and not almost smooth. 
Then $\zeta|_Y$ is not pseudoeffective.
\end{proposition}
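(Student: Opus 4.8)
The plan is to translate the statement, via the results of Subsection 3.B--3.C, into the non‑existence of certain sections on $X$, and then contradict this by an Euler‑number count. We may assume $B\simeq\PP^1$ (if $g(B)\geq 1$ the same argument applies with $\sO_{\PP^1}(ma)$ replaced by a line bundle of degree $ma$ on $B$, the point being that a vertical effective divisor on $X$ linearly equivalent to such a pull‑back is again a pull‑back because the additive fibres are primitive). Recall that by Setup \ref{setup} the subvariety $Y=\PP(\sI_Z\otimes\omega_{X/B}(-D))$ of \eqref{defY} is the blow‑up $\mu:\hat X\to X$ of the local complete intersection $\sI_Z$, and that $\zeta|_Y$ is the tautological class of $\PP(\sI_Z\otimes\omega_{X/B}(-D))$. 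Hence by Corollary \ref{corollarypsefftautological} the class $\zeta|_Y$ is pseudoeffective if and only if the torsion‑free sheaf $\sI_Z\otimes\omega_{X/B}(-D)$ is strongly pseudoeffective; since $\omega_{X/B}(-D)$ is trivial on the general fibre and $B$ is rational, Corollary \ref{corpseffkappa} shows that this is in turn equivalent to the existence, for some $m\geq 1$, of a nonzero section of $\sI_Z^m\otimes\omega_{X/B}(-D)^{\otimes m}$. Using \eqref{formulaKXL}, $\omega_{X/B}(-D)\simeq\sO_X(aF-D_0)$ with $a=\chi(X,\sO_X)$, so a pseudoeffective $\zeta|_Y$ would provide an effective divisor $\Gamma\sim m(aF-D_0)$ lying, locally, in $\sI_Z^m$.

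The next step is the analysis of such a $\Gamma$. Intersecting with the general fibre $F$ gives $\Gamma\cdot F=0$, so $\Gamma$ is supported on fibres; therefore $\Gamma+mD_0$ is an effective member of $|maF|=|f^*\sO_{\PP^1}(ma)|$, and since $f_*\sO_X=\sO_{\PP^1}$ every such divisor is $f^*\Delta$ for an effective $\Delta=\sum_b c_b\,[b]$ of degree $\sum_b c_b=ma$. Fix a singular fibre $\Phi_b=\sum_j\mu_{b,j}C_{b,j}$ which is not a multiple of a smooth elliptic curve; for an isotrivial fibration these are exactly the fibres of additive type $I_0^*$, $IV^*$, $III^*$, $II^*$, $II$, $III$, $IV$ (there are no fibres of type $I_n$, $n\geq 1$). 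The part of $\Gamma$ over $b$ is $\gamma_b=c_b\Phi_b-mD_{0,b}$, and two constraints bound $c_b$ below. First, effectivity of $\gamma_b$ along the component $C_{b,j_0}$ of maximal multiplicity $\mu_{\max,b}$ forces $c_b\geq m\bigl(1-\tfrac1{\mu_{\max,b}}\bigr)$, which for the four starred types is precisely $m\cdot e(\Phi_b)/12$. Second — and this is where the sheaf $\sI_Z$ is genuinely used — at the node $p$ where $C_{b,j_0}$ meets an adjacent component of multiplicity $\mu'$ one has $\sI_Z=\mathfrak m_p$, so $\gamma_b\in\sI_Z^m$ gives $\mathrm{mult}_p\gamma_b\geq m$, i.e. $(c_b-m)(\mu_{\max,b}+\mu')+2m\geq m$, hence the strictly better bound $c_b\geq m\bigl(1-\tfrac1{\mu_{\max,b}+\mu'}\bigr)>m\cdot e(\Phi_b)/12$. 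For an $I_0^*$ fibre, where $D_{0,b}$ is the central $(-2)$‑curve $C_0$ and $Z$ consists of the four nodes $C_0\cap C_i$, this reads: $\gamma_b=(2c_b-m)C_0+c_b(C_1+\dots+C_4)$ is effective iff $c_b\geq m/2$, while vanishing of order $m$ at a node gives $3c_b-m\geq m$, i.e. $c_b\geq 2m/3>m/2=m\cdot 6/12$. For the reduced additive fibres $II,III,IV$ one has $D_{0,b}=0$, and one argues the same way from the vanishing of $\Gamma$ at the cusp, tacnode resp. triple point, using that there $\sI_Z$ is the Jacobian ideal of the local equation — in particular $\sI_Z\subseteq\mathfrak m_p^{2}$ in the type $IV$ case — which again yields $c_b>m\cdot e(\Phi_b)/12$.

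Summing over all singular fibres, and using that smooth and multiple‑smooth fibres contribute $0$ to the Euler number, we obtain
$$
ma=\sum_b c_b\ \geq\ \sum_{b\ \mathrm{additive}}c_b\ >\ \frac{m}{12}\sum_b e(\Phi_b)=\frac{m}{12}\,e(X)=m\,\chi(X,\sO_X)=ma,
$$
where the equality $\tfrac1{12}e(X)=\chi(X,\sO_X)$ is Noether's formula together with $K_X^2=0$ for a relatively minimal elliptic surface; the strict inequality is legitimate because $f$ is not almost smooth, hence at least one additive fibre occurs. This contradiction shows that $\zeta|_Y$ is not pseudoeffective. The main obstacle is precisely the fibre‑by‑fibre estimate: the naive effectivity bound only gives $c_b\geq m\,e(\Phi_b)/12$ with equality, and one must verify in each of the finitely many admissible Kodaira types that the extra vanishing imposed by $\sI_Z$ at the singular points of the reduction strictly improves this — for the starred types this comes from the node joining the two deepest components and ultimately from the fact that $Z$ really is supported there, and for $II,III,IV$ from the structure of the Jacobian ideal of the reduction.
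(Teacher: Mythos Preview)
Your argument is correct and follows essentially the same route as the paper: both reduce, via Corollary~\ref{corollarypsefftautological} and Corollary~\ref{corpseffkappa}, to excluding sections of $\sI_Z^m\otimes(\omega_{X/B}(-D))^{\otimes m}$, then carry out a fibre-by-fibre Euler-number count to derive the contradiction $ma>ma$. The paper leaves the ``tedious case by case calculation'' to the reader, and your proposal supplies exactly that --- indeed for a slightly larger list of additive types ($II^*,III^*,IV^*$ in addition to the paper's $II,III,IV,I_0^*$), which does no harm and arguably makes the argument more self-contained.
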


\begin{proof} By Corollary \ref{corollarypsefftautological} the statement is equivalent to showing that $I_Z \otimes \omega_{X/B}(-D)$ is not strongly pseudoeffective.
By Corollary \ref{corpseffkappa} this is equivalent to showing that
$$
H^0(X, I_Z^k \otimes (\omega_{X/B}(-D))^{\otimes k}) = 0
$$
for all $k \in \N$.  Recall the canonical bundle formula \eqref{cbf}, formula  \eqref{formulaKXL} 
and \cite[III,Prop.11.4, Rem.11.5]{BHPV04}:
$$
\chi(X, \sO_X) = \frac{e(X)}{12} = \frac{1}{12} \sum_{b \in K} e(X_b).
$$
Here the set $K \subset B$ consists of all the points such that the reduction of the scheme-theoretic fibre $X_b$ is not an elliptic curve.
From these facts, we obtain, 
$$
K_{X/B}-D \sim_\Q 
 \sum_{b \in K} (\frac{e(X_b)}{12} F - D_{0,b}).
$$
Now the proof is finished by observing that 
$$
\kappa(X,  \sI_{Z,b} \otimes \frac{e(X_b)}{12} F) = -\infty
$$
for all $b \in K$. This is done by a tedious case by case calculation, 
using that, by the proof of Lemma \ref{lem:fibers}, the non-multiple fibres
are of Kodaira's type $II, III, IV$ or $I_0^*$.
The details are left to the interested reader.
\end{proof} 

As an application, we obtain

\begin{corollary} \label{cor:Zar} 
Let $X$ be a smooth projective surface such that $K_X$ is nef and $\kappa(X)=1$. 
Denote by $\zeta$ the tautological divisor on $\holom{\pi}{\PP(\Omega_X)}{X}$. 
If $\zeta$ is pseudoeffective and nef in codimension one, then
we have $\widetilde q(X) > 0$. In particular, by \cite[Prop.2.2]{Ane18},
we have $H^0(X,S^{m}\Omega^1_X) \ne 0$ for some positive integer $m$.
\end{corollary}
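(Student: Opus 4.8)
The plan is to argue by contradiction: assume $\widetilde q(X)=0$ and show that this, together with $\zeta$ being pseudoeffective and nef in codimension one, is impossible. First I would produce the elliptic fibration: since $K_X$ is nef with $\kappa(X)=1$, abundance for surfaces gives that $K_X$ is semiample, hence we get the Iitaka fibration $f\colon X\to B$, which is elliptic and, since $X$ is minimal, relatively minimal. The assumption $\widetilde q(X)=0$ forces $q(X)=0$, hence $g(B)=0$ (otherwise $q(X)\geq g(B)>0$ via $f^{*}\colon H^{0}(B,\Omega_{B})\hookrightarrow H^{0}(X,\Omega_{X})$), so $B\simeq\PP^{1}$; it also forces $f$ to be not almost smooth, since for an almost smooth $f$ one has $e(X)=0$ by \cite[III, Prop.11.4]{BHPV04}, whence Noether's formula together with $K_{X}^{2}=0$ gives $\chi(X,\sO_{X})\leq0$ and so $q(X)>0$, as recalled at the beginning of Section \ref{sectionelliptic}; and it forces $f$ to be isotrivial, since for a non-isotrivial elliptic surface with $\Omega_{X}$ pseudoeffective one already has $\widetilde q(X)>0$ by Corollary \ref{cor:noniso}. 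Thus I am reduced to the situation where $f\colon X\to\PP^{1}$ is isotrivial, relatively minimal and not almost smooth, while $\Omega_{X}$ (equivalently $\zeta$) is pseudoeffective and $\zeta$ is nef in codimension one.

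In this situation I would bring in Proposition \ref{prop:isotrivial}: with the notation of Setup \ref{setup}, the prime divisor $Y=\PP(\sI_{Z}\otimes\omega_{X/B}(-D))\subset\PP(\Omega_{X})$ of \eqref{defY} satisfies that $\zeta|_{Y}$ is \emph{not} pseudoeffective. The plan is then to contradict this by showing that nef-in-codimension-one of $\zeta$ propagates to pseudoeffectivity of its restriction to the surface $Y$.

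For that last step I would argue as follows. ``Nef in codimension one'' means that the divisorial part of the negative part of the $\sigma$-decomposition of $\zeta$ vanishes, so in particular $\sigma_{Y}(\zeta)=0$ (\cite[III, \S 1]{Nak04}). Fix an ample divisor $A$ on the smooth threefold $P=\PP(\Omega_{X})$. Since $\zeta$ is pseudoeffective, $\zeta+\epsilon A$ is big for every $\epsilon>0$, and by the very definition of $\sigma_{Y}$ there is an effective $\R$-divisor $D_{\epsilon}\sim_{\R}\zeta+\epsilon A$ with $\operatorname{ord}_{Y}(D_{\epsilon})=:a_{\epsilon}\to0$ as $\epsilon\to0$. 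Writing $D_{\epsilon}=a_{\epsilon}Y+R_{\epsilon}$ with $R_{\epsilon}\geq0$ not containing $Y$ and restricting to $Y$ — legitimate because $Y$ is Cartier in $P$ and is not a component of $R_{\epsilon}$ — one gets that $(\zeta+\epsilon A)|_{Y}-a_{\epsilon}\,Y|_{Y}\sim_{\R}R_{\epsilon}|_{Y}$ is effective, hence pseudoeffective on $Y$. Letting $\epsilon\to0$ and using that the pseudoeffective cone of $Y$ is closed, $\zeta|_{Y}$ is pseudoeffective, contradicting Proposition \ref{prop:isotrivial}. This contradiction shows $\widetilde q(X)>0$; the final statement $H^{0}(X,S^{m}\Omega^{1}_{X})\neq0$ then follows from $\widetilde q(X)>0$ by \cite[Prop.2.2]{Ane18} (equivalently Proposition \ref{propcover2}).

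The routine reductions aside, the one real point is the last step: one must be sure that the notion ``nef in codimension one'' unwinds to the vanishing of every divisorial $\sigma$-invariant of $\zeta$, and then run the restriction-of-(effective $\R$-)divisors argument above carefully, keeping track of the Cartier/$\R$-Cartier bookkeeping so that restriction to the (possibly singular) surface $Y$ is well-defined. Everything else is an assembly of results already in hand, the crucial input being Proposition \ref{prop:isotrivial}, which does the genuine geometric work in the isotrivial non-almost-smooth case (including the complex-multiplication case not covered by Theorem \ref{theoremisotrivialstandard}).
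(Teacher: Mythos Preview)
Your proposal is correct and follows essentially the same route as the paper: reduce to the isotrivial, relatively minimal, not almost smooth case via Corollary \ref{cor:noniso} and the Euler-characteristic argument, then invoke Proposition \ref{prop:isotrivial} to contradict the nef-in-codimension-one hypothesis. The only difference is organisational: the paper argues directly (``$\zeta|_Y$ not pseudoeffective $\Rightarrow$ $Y$ lies in the negative part of the divisorial Zariski decomposition $\Rightarrow$ $\zeta$ not nef in codimension one''), whereas you frame it as a contradiction and spell out the contrapositive restriction argument via $\sigma_Y(\zeta)=0$ in detail; your added detail is useful, since the paper leaves that implication as a one-line assertion.
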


\begin{proof}
If the Iitaka fibration is isotrivial and not almost smooth, we see by Proposition \ref{prop:isotrivial} that the restriction $\zeta|_Y$ is not pseudoeffective.
Thus $Y$ is in the negative part of the divisorial Zariski decomposition, hence $\zeta$ is not nef in codimension one.

Thus we know that $f$ is almost smooth or not isotrivial.
If the Iitaka fibration is almost smooth, then $c_2(X) = 0$ \cite[III,Prop.11.4]{BHPV04} and thus $q(X) > 0$ as already noticed at the beginning of Section \ref{sectionelliptic}. 

If the Iitaka fibration $\holom{\varphi}{X}{B}$ is not isotrivial, we conclude by Corollary
\ref{cor:noniso}. 
\end{proof}

\begin{remark*}
By Proposition \ref{prop:isotrivial} the restriction $\zeta|_Y$ is not pseudoeffective,
so by divisorial Zariski decomposition there exists a $c>0$ such that $\zeta-cY$ is pseudoeffective. If we prove that $\zeta-cY$ is pseudoeffective for some $c \geq 1$,
we obtain the nonvanishing conjecture as in Corollary \ref{cor:noniso}. 
However this is not obvious, even in simple situations.
\end{remark*}

\begin{appendix}

\section{Fundamental group of elliptic surfaces} \label{appendixfinite}

The following statement is essentially a consequence of \cite[Sect. 3.5]{Cam04}

\begin{lemma}
Let $\holom{f}{X}{B}$ be a smooth elliptic surface such that the cotangent bundle
$\Omega_X$ is not pseudoeffective. Then the fundamental group of $X$ is finite.
\end{lemma}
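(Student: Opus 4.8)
The plan is to reduce to the case $B\simeq\PP^1$ with $f$ relatively minimal and $q(X)=0$, to translate the hypothesis into the statement that the orbifold fundamental group of the base is finite, and then to conclude via the classical homotopy exact sequence for elliptic surfaces together with the surface classification.

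First I would run the standard reductions. Since $\pi_1$ is a birational invariant among smooth projective surfaces and, by Proposition \ref{prop:bir}, pseudoeffectivity of $\Omega_X$ for a smooth surface is unchanged under blowing up fibre components, I may replace $X$ by the relatively minimal model of $f$. If $q(X)\ge 1$ then a nonzero holomorphic $1$-form generates an invertible subsheaf $\sO_X(E)\subset\Omega_X$ with $E$ effective, hence pseudoeffective, so $\Omega_X$ would be pseudoeffective by Example \ref{examplesubsheaf}; thus $q(X)=0$, and in particular $g(B)=0$ --- otherwise $\Omega_B$ is a nef line bundle on $B$, so $f^*\Omega_B\subset\Omega_X$ is pseudoeffective, again a contradiction. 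If moreover $\kappa(X)=-\infty$, then $q(X)=P_2(X)=0$ forces $X$ to be rational by Castelnuovo's criterion, so $\pi_1(X)=1$; hence I may assume $B\simeq\PP^1$ and $\kappa(X)\ge 0$.

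Next I would extract the orbifold inequality. The saturation $f^*\Omega_{\PP^1}(D)\subset\Omega_X$ of $f^*\Omega_{\PP^1}$ is a line bundle, which is not pseudoeffective by Example \ref{examplesubsheaf}. Using the Zariski decomposition $D=\sum_{i=1}^s(m_i-1)F_i+D_0$ recorded in Setup \ref{setup}, its class is $c_1(f^*\Omega_{\PP^1}(D))\equiv(\lambda-2)F+D_0$ with $\lambda:=\sum_{i=1}^s(1-\tfrac1{m_i})$ and $D_0\ge 0$; if $\lambda\ge 2$ this would be a sum of the nef class $(\lambda-2)F$ and the effective class $D_0$, hence pseudoeffective. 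Therefore $\sum_{i=1}^s(1-\tfrac1{m_i})<2$, which forces $s\le 3$ and makes $(\PP^1;m_1,\dots,m_s)$ one of the finitely many spherical or bad $2$-orbifolds ($(m)$, $(m_1,m_2)$, $(2,2,n)$, $(2,3,3)$, $(2,3,4)$, $(2,3,5)$); in every case the orbifold fundamental group $\Delta:=\pi_1^{\mathrm{orb}}(\PP^1;m_1,\dots,m_s)$ is finite.

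Finally I would conclude. By the classical description of fundamental groups of relatively minimal elliptic surfaces (see \cite[Sect.~3.5]{Cam04}), with $F$ a general fibre there is an exact sequence $\pi_1(F)\to\pi_1(X)\to\Delta\to 1$. Hence $N:=\im(\pi_1(F)\to\pi_1(X))$ is normal of finite index $|\Delta|$, and abelian as a quotient of $\pi_1(F)\simeq\Z^2$, so $N\simeq\Z^a\oplus(\text{finite})$ with $a\in\{0,1,2\}$ and $\pi_1(X)$ is virtually $\Z^a$. If $a=0$ we are done; if $a=1$ some finite \'etale cover $\widetilde X$ has $\pi_1(\widetilde X)\simeq\Z$, so $b_1(\widetilde X)=1$, impossible for a compact K\"ahler surface; if $a=2$ some finite \'etale cover $\widetilde X$ has $\pi_1(\widetilde X)\simeq\Z^2$, so $q(\widetilde X)=\tfrac12 b_1(\widetilde X)=1$, and then a nonzero holomorphic $1$-form makes $\Omega_{\widetilde X}$ pseudoeffective, whence $\Omega_X$ is pseudoeffective by Proposition \ref{prop:cover} --- contradicting the hypothesis. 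So $a=0$ and $\pi_1(X)$ is finite. The one genuinely external ingredient is the homotopy exact sequence for elliptic surfaces; the only delicate point is to invoke $q(X)=0$ together with the evenness of $b_1$ in the last step, which is precisely what rules out the virtually-abelian-but-infinite possibilities.
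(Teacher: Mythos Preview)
Your proof is correct and proceeds by a route genuinely different from the paper's. Both arguments begin by reducing to the relatively minimal model over $\PP^1$, and both ultimately appeal to Proposition~\ref{prop:cover} on a finite \'etale cover. The paper first disposes of the almost-smooth case (where an \'etale cover by a product $B'\times E$ yields a contradiction), then invokes the stronger fact from \cite{CZ79} and \cite{Cam11} that for a non-almost-smooth elliptic fibration the map $\pi_1(F)\to\pi_1(X)$ is actually \emph{trivial}, so that $\pi_1(X)\simeq\pi_1^{\mathrm{orb}}(B,\Delta)$; it then uses \cite{Cam98} to pass to an \'etale cover $X'\to X$ whose induced fibration has no multiple fibres, observes $B'\simeq\PP^1$ from non-pseudoeffectivity of $\Omega_{X'}$, and concludes $\pi_1(X')=1$. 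You instead extract the inequality $\sum(1-1/m_i)<2$ directly from the non-pseudoeffectivity of $f^*\Omega_{\PP^1}(D)$, read off from the classification of spherical and bad $2$-orbifolds that $\Delta$ is finite, and then control the image of $\pi_1(F)$ in the exact sequence $\pi_1(F)\to\pi_1(X)\to\Delta\to 1$ via the virtually-abelian analysis, ruling out rank $1$ by evenness of $b_1$ on a K\"ahler cover and rank $2$ by pseudoeffectivity on a cover. Your approach trades the deeper input that $\pi_1(F)$ dies in $\pi_1(X)$ for a non-almost-smooth fibration against a slightly longer but more self-contained endgame; the paper's route is shorter once that fact is admitted.
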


\begin{proof}
By Proposition \ref{prop:bir} we can assume without loss of generality that $X$
is minimal. Since $\Omega_X$ is not pseudoeffective, we have $B \simeq \PP^1$.
If $f$ is almost smooth, there exists an \'etale cover $B' \times E \rightarrow X$
with $E$ an elliptic curve (see introduction to Section \ref{sectionelliptic}). 
In particular $\Omega_{B' \times E}$ is pseudoeffective, a contradiction to Proposition \ref{prop:cover}.
Thus $f$ is not almost smooth, hence by \cite[Lemma 1.39]{CZ79} and \cite[Cor.12.10]{Cam11} one has $\pi_1(X) \simeq \pi_1(B, \Delta)$, where $(B, \Delta)$ 
is the orbifold structure defined by the multiple fibres. Since $B$ is a curve, by \cite[App.C]{Cam98}, we can find a finite \'etale cover $X' \rightarrow X$ 
such that the orbifold divisor of $X' \rightarrow B'$ is empty. Since $\Omega_{X'}$
is not pseudoeffective by Proposition \ref{prop:cover}, we have $B' \simeq \PP^1$.
Thus $\pi_1(X') \simeq \pi_1(B') \simeq \{ 1 \}$.
\end{proof}

\end{appendix}


\end{document}